\newcommand{\R}{\mathbb{R}}
\newcommand{\C}{\mathbb{C}}
\newcommand{\p}{\partial}
\newcommand{\vp}{\phi}
\renewcommand{\varphi}{\phi}
\newcommand{\norm}[1]{\left\lVert#1\right\rVert} % norm: double vertical bars
\newcommand{\curl}{\mathop{\rm curl} \nolimits}
\newcommand{\dist}{\mathop{\rm dist}\nolimits}
\newcommand{\reals}{\R}
\newcommand{\complexes}{\C}
\renewcommand{\div}{ \mathop{\rm div}\nolimits}
\renewcommand{\widetilde}{\tilde}
\newcommand{\re}{\mathop{\rm Re}\nolimits} %%Command for real part
\newcommand{\im}{\mathop{\rm Im}\nolimits} %%Command for unreal part
\newenvironment{remark}[1][Remark]{\begin{trivlist}\item[\hskip \labelsep
{\it #1. }]}{  \goodbreak \end{trivlist}}  
\numberwithin{equation}{section}
\newtheorem{theorem}[equation]{Theorem}
\newtheorem{proposition}[equation]{Proposition}
\newtheorem{lemma}[equation]{Lemma}
\newtheorem{corollary}[equation]{Corollary}
\renewcommand{\theequation}{\arabic{section}.\arabic{equation}}
\begin{document}
\title{Inverse boundary value problems for polyharmonic operators with non-smooth
coefficients}
\author{R.M.~Brown\footnote{R.~Brown is partially supported by a grant from the Simons
Foundation (\#422756).}
\\ Department of Mathematics\\ University of Kentucky \\ Lexington, KY 40506 0027
\and 
L.D.~Gauthier\\Department of Mathematics \\ University of Kentucky \\ Lexington, KY 40506 0027 }
\date{}
\maketitle

\begin{abstract}
    We consider inverse boundary value problems for polyharmonic operators and in particular, 
     the problem of recovering the coefficients of terms up to order one. 
    The main
    interest of our result is that it further relaxes the regularity 
    required to establish uniqueness. The proof relies on an averaging 
    technique introduced by Haberman
    and Tataru for the study of an inverse boundary value problem 
    for a second order operator. 
\end{abstract}

\section{Introduction}\label{Intro}

The goal of this paper is to study an   inverse boundary value 
problem for a family of operators 
whose principal part is the polyharmonic operator $(-\Delta)^m$ for $m \geq 2$. 
We fix a bounded domain $ \Omega  \subset  \reals ^d$, $d\geq 3$, and 
assume the boundary of $ \Omega$ is at least Lipschitz. We consider an 
operator  $L$ given by 
\begin{equation} \label{eq:Poly}
L u = ( - \Delta )^m u + Q \cdot D u + qu 
%% \text{ in } \Omega
\end{equation}
where we use $D = -i \nabla$ and we  
assume that the coefficients $Q$ and $q$ lie in 
Sobolev spaces of negative order. 
To give an  example of the problems we 
study, we consider the Navier boundary value problem for a polyharmonic operator 
with lower order terms
\begin{equation}\label{eq:navier}
\begin{cases} L u = 0, \qquad & \mbox{in } \Omega
\\
( -\Delta )^j u = \phi _j , \qquad & \mbox{on }\partial \Omega, j=0, \dots, m-1. 
\end{cases}
\end{equation}
For this solution $u$, we find $\frac { \partial } { \partial \nu } ( - \Delta )^j u$,
 $j=0,\dots,m-1$ where $ \partial /\partial \nu$ is the derivative in the direction of 
 outer unit normal.   The problem we study is whether the  
 coefficients $q$ and $Q$ are 
 uniquely determined by the Cauchy data  at the boundary
 $$(u, \frac { \partial u }{ \partial \nu } , 
-\Delta u, \dots, ( - \Delta )^ { m-1 } u, 
\frac \partial { \partial \nu } ( - \Delta )^ { m-1 } u  )  $$ 
for all solutions. 
Our main result gives regularity conditions on the coefficients that 
guarantee a positive answer to this question. The primary interest of our work
is that we lower the smoothness required to establish this result by up to  
one derivative as 
compared to earlier work of Assylbekov and Iyer \cite{MR4027047}.

We briefly review some of the history of inverse boundary value problems. An 
early 
study of inverse boundary value problems for second order operators was 
carried out by 
Calder\'on \cite{AC:1980} who considered a linearized problem and introduced the 
idea of looking at what we now call complex geometrical optics (CGO) solutions.  
Sylvester and Uhlmann \cite{SU:1987} obtained the first uniqueness result for the 
full  nonlinear problem.  They studied the inverse conductivity problem where the 
goal is to show that  a scalar function $ \gamma$ (the conductivity) is uniquely 
determined from 
the Cauchy data for  solutions of the operator $ \div \gamma \nabla$.
The study of  inverse boundary value problems  for the  biharmonic   operator 
was begun by Krupchyk, Lassas and Uhlmann  \cite{MR3118392} who establish 
that first order 
terms are uniquely determined by the Cauchy data. 
%% These authors extended their
%% results to include polyharmonic operators \cite{MR3118392}. 
%%
The stability of the solution of this problem was considered by Choudhury and 
Krishnan \cite{MR3357587}.
%%More work on stability to be cited, MR3630136, xyz Heck, Choudhury
Krupchyk, Lassas and Uhlmann \cite{MR2873860}  study inverse boundary value 
problems for the biharmonic operators with 
partial data on the boundary. 
 Khrishnan and Ghosh \cite{MR3546596} extend this work to consider inverse  boundary value problems for 
 the polyharmonic operators with partial data.  Bhattacharyya and Ghosh
\cite{MR3953481,BG:2021} consider second order perturbations of biharmonic and polyharmonic. The most
recent paper \cite{BG:2021} establishes the uniqueness  
of a second order term from boundary data. 
Work of Yan \cite{LY:2020} establishes uniqueness for first order 
perturbations of a biharmonic operator  on transversally isotropic 
manifolds. She extends this work to give a constructive recovery 
procedure for continuous potentials \cite{LY:2021}. 
Serov \cite{MR3476498} establishes a Borg-Levinson theorem which shows that certain coefficients for biharmonic operator are determined by 
boundary spectral information. His proof proceeds by establishing a connection between the 
boundary spectral information and a Dirichlet to Neumann map. 
%%xyz additional references added in response to comment Y5 and X10 

Our interest here is to study the regularity hypotheses that are needed to show 
that the 
coefficients are uniquely determined by the Cauchy data at the boundary. 
We review some of the history related to smoothness hypotheses needed to obtain uniqueness
in inverse boundary value problems for second 
order equations. 
An important technique 
in the study of 
inverse boundary value problems for the conductivity equation $ \div \gamma \nabla$ 
is a change
of dependent variable which transforms the conductivity equation into a 
Schr\"odinger equation. 
A calculation shows that if $ \div \gamma \nabla u =0$, then 
$ v = \sqrt \gamma \, u$ will be a solution of  $( -\Delta + q ) v =0$ 
with $q= \Delta \sqrt \gamma / \sqrt \gamma $.  Thus results for the conductivity 
equation with $\gamma $ having $s$ derivatives is implied by a result for 
Schr\"odinger operators 
with potentials that have $s-2$ derivatives, at least for $s\geq 1$.   
It will be helpful to keep this correspondence
in mind in the discussion below.  For the conductivity 
equation,  work of 
Brown  \cite{RB:1994a} and Brown and Torres \cite{BT:2003} establish the 
uniqueness of the coefficient $ \gamma$ in the conductivity  operator 
 when $ \gamma $ has roughly 3/2 derivatives.
Paivarinta, Panchenko and Uhlmann \cite{MR1993415}  construct CGO solutions 
for $C^1$-conductivities but are not able to use these solutions to 
establish a uniqueness result. 
An important breakthrough  was made by B. Haberman and D.~Tataru
\cite{MR3024091} 
who establish uniqueness for $C^1$-conductivities. 
%%In Sylvester 
%%and Uhlmann's solution of the
%%inverse conductivity problem they construct solutions that are not  
%%used in proving uniqueness.
Haberman and Tataru's innovation is to develop a family of spaces and show that in an
average sense the behavior of the potentials is better than one might expect from looking at the 
pointwise behavior. This averaging 
gives better estimates than obtained, for example, 
by P\"aiv\"arinta, Panchenko and Uhlmann and leads to  uniqueness for
$C^1$-conductivities. Further extensions  include work of 
Caro and Rogers \cite{MR3456182}  who establish uniqueness
for Lipschitz conductivities and an additional work of Haberman \cite{MR3397029}  that
considers
conductivities with one derivative in $L^d$ (where $d$ is the dimension) at least 
for $d=3,4$. Recent extensions of these ideas may be found in articles
by Ham, Kwon and Lee \cite{MR4273826} and Ponce-Vanegas
\cite{poncevanegas2019bilinear,MR4112132}. 
It is a conjecture of Uhlmann that  the least regular Sobolev space for
which we can prove uniqueness is $W^{1,d}$.  However, there are  few tools for treating conductivities
which have less than one derivative 
when $ d \geq 3$ nor is there an obvious path to constructing examples. 
If $d=2$,  Astala and  P\"aiv\"arinta
\cite{AP:2006} establish uniqueness for conductivities that are only bounded and 
measurable. 
%% Are there any results for biharmonic in 2d? 

The key innovation of our research
is
to adapt the arguments of Haberman and Tataru to polyharmonic operators. 
We will consider operators whose principal part is the 
polyharmonic operator  and 
include lower order terms involving at most one derivative as in \eqref{eq:Poly}.
For these operators we are able to establish uniqueness for potentials 
$q$ which lie in Sobolev spaces with smoothness index $ -s >-m/2 -1$. We believe that 
the cutoff in the smoothness index should be $-m$ for operators where the 
principal part is $( - \Delta )^m$. This is consistent with the result of 
Haberman and Tataru when $m=1$.
Our result allows us to get arbitrarily close to this cut off 
when $m =2$. However, when $m \geq 3$ we believe that there is a large gap between our results
and the optimal result.   
%%  Stuff moved that was repetive, xyz. 

 The  investigation of uniqueness in inverse problems for  
polyharmonic 
operators with coefficients of negative order was begun by Assylbekov \cite{MR3627033,MR3691848} and Assylbekov 
and Iyer \cite{MR4027047}. In the work of Assylbekov and Iyer,   
the authors establish uniqueness 
when $ q $ lies in the
Sobolev space $W^{-s,p}$ with $ s < m /2$ and $p \geq 2d/m$ and $Q $ 
lies in a Sobolev space $W^{-s+1. p}$ with $ s < m/2$ and $p  > 2d/m $, 
at least in the case $m<d$. We refer the interested reader to the 
work of Assylbekov and Iyer for the complete statement. The  result of Assylbekov and Iyer is in the 
spirit of the work of the first author and Torres \cite{BT:2003} who studied the inverse 
conductivity problem for conductivities with 3/2 derivatives.  
%% xyz Added some discussion of Krupchyk and Uhlmann here. This is a partial
%% response to Ref Y #3-4. 
A related result is due to Krupchyk and Uhlmann who consider polyharmonic
operators with potentials in $L^p$ spaces \cite{MR3484382}.  They are able to establish 
uniqueness when $q \in L^p$ for $ p > d/2m$.  Since $L^p$ embeds into $W^{ -s, q}$ for 
$ 1/p = 1/q + s/d$ our result below in Theorem \ref{th:main} will give uniqueness for 
potentials in $L^p$ for $ p > d/m$ which is a weaker result for $L^p$ spaces than
Krupchyk and Uhlmann give. This is one of several indications that our result, while an 
advance over earlier work, is not the last word on this subject. 
It also remains an open problem to extend the techniques we discuss for nonsmooth first 
order perturbations to second order perturbations as studied in the  works of 
Bhattarcharyya and Ghosh mentioned earlier.  
We refer  readers to the articles cited for
additional work on these problems. 

The main result of this work is  the following result. 
The reader will need to refer to later sections of the paper for
definitions of some of the notations occurring in this result. 

\begin{theorem} 
\label{th:main}
Fix $m \geq 2$ and $ d \geq 3$ and let $
\Omega \subset \reals ^d$ be a bounded domain 
with smooth boundary. Let $L_j = ( -\Delta)^m + Q^j\cdot D + q^j $ with $ q^j 
\in \tilde W^ { -s,p} ( \Omega)$ and each component  $Q^j$ lies in 
$\tilde W^{ -s+1,p}(\Omega)$ with $ s < m/2 +1$, $ 1/p +(s-m)/d <0 $,  and $p\geq 2$. 
If  each of the operators    $L_1$ and $L_2$ have the same linear space of Cauchy data
for 
\eqref{eq:navier}, then  
$Q^1=Q^2$ and $q^1=q^2$.
\end{theorem}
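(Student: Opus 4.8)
\emph{Plan of proof.} The argument is the complex geometrical optics (CGO) scheme, with the rough coefficients handled by the averaging method of Haberman and Tataru. We first derive the standard integral identity. Let $u_1$ solve $L_1u_1=0$ in $\Omega$, let $\tilde u_2$ solve $L_2\tilde u_2=0$ with the same Cauchy data as $u_1$ (available since the two Cauchy data spaces agree), and set $w=u_1-\tilde u_2$, so that $w$ has vanishing Cauchy data and $L_2w=(Q^2-Q^1)\cdot Du_1+(q^2-q^1)u_1$. Pairing $L_2w$ against a solution $u_2$ of the formal transpose $L_2^{\ast}u_2=0$ and using Green's formula for $(-\Delta)^m$ — whose boundary terms can be written through the Cauchy data of \eqref{eq:navier} — all boundary contributions drop out because $w$ has zero Cauchy data, and we are left with
\[
\int_\Omega\big[(Q^1-Q^2)\cdot Du_1\big]\,\overline{u_2}\,dx+\int_\Omega(q^1-q^2)\,u_1\overline{u_2}\,dx=0
\]
for every such pair $(u_1,u_2)$. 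The hypotheses $q^j\in\tilde W^{-s,p}$, $Q^j\in\tilde W^{-s+1,p}$ with $s<m/2+1$ and $1/p+(s-m)/d<0$ are exactly what makes the two pairings above meaningful: the CGO correction terms produced below will lie in Sobolev spaces of positive order high enough that their products with $Q^j$ and $q^j$ are well-defined compactly supported distributions that can be paired against the remaining exponential factor.

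The heart of the matter is the construction of the CGO solutions. We look for $u_1=e^{x\cdot\zeta_1}(1+\psi_1)$ with $\zeta_1\in\C^d$, $\zeta_1\cdot\zeta_1=0$, and $|\zeta_1|\sim1/h$; conjugating by $e^{x\cdot\zeta_1}$ turns $(-\Delta)^m$ into $(-1)^m\Delta_{\zeta_1}^m$, where $\Delta_{\zeta_1}=\Delta+2\zeta_1\cdot\nabla$, and since $\Delta_{\zeta_1}1=0$ the correction must satisfy an equation $\Delta_{\zeta_1}^m\psi_1=G_1(1+\psi_1)$, where $G_1$ is the conjugated lower-order operator, whose leading part is multiplication by $\mp i\,Q^1\!\cdot\!\zeta_1$, of size $1/h$. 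Starting from a solvability estimate for $\Delta_\zeta$ in the weighted Bourgain-type spaces adapted to the characteristic set of the conjugated symbol, one iterates to obtain a solvability estimate for the $m$-fold operator $\Delta_{\zeta_1}^m$, and then closes a Neumann series for $\psi_1$ using the mapping properties of multiplication by $Q^1$ and $q^1$ in these spaces; the same construction applied to $L_2^{\ast}$ yields $u_2=e^{x\cdot\zeta_2}(1+\psi_2)$ with $\zeta_2\cdot\zeta_2=0$. We arrange $\zeta_1+\overline{\zeta_2}=-i\xi$ for a prescribed $\xi\in\R^d$, so that $e^{x\cdot\zeta_1}\,\overline{e^{x\cdot\zeta_2}}=e^{-ix\cdot\xi}$. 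For fixed $\xi$ the norms of $\psi_1,\psi_2$ need not be small for each individual admissible $\zeta_1$; the Haberman--Tataru device is that, averaging over a family of $\zeta_1$ for which $h\zeta_1$ sweeps out the unit complex null vectors orthogonal to $\xi$, the mean of $\norm{\psi_j}^2$ in these spaces tends to $0$ as $h\to0$ — and it is precisely here that the thresholds $s<m/2+1$ and $1/p+(s-m)/d<0$ enter quantitatively. Consequently, along a sequence $h\to0$ we may choose $\zeta_1(h)$ for which $\psi_1,\psi_2$, and the derivatives of them that occur, are negligible in the integral identity.

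It then remains to read off the coefficients. Substituting the CGO solutions and using $Du_1=e^{x\cdot\zeta_1}(-i\zeta_1(1+\psi_1)-i\nabla\psi_1)$, the leading term of the identity is $-i\,\zeta_1\cdot\widehat{(Q^1-Q^2)}(\xi)$, of size $\sim1/h$, while the $q$-contribution is $O(1)$; dividing by $|\zeta_1|$, letting $h\to0$ along the good sequence, and varying the family, we get $\omega\cdot\widehat{(Q^1-Q^2)}(\xi)=0$ for every unit complex null vector $\omega\perp\xi$. Since such $\omega$ span the complexification of $\xi^{\perp}$, $\widehat{(Q^1-Q^2)}(\xi)$ is parallel to $\xi$ for a.e.\ $\xi$, i.e.\ $Q^1-Q^2=\nabla\Phi$ for a scalar distribution $\Phi$ supported in $\overline\Omega$ and vanishing near $\partial\Omega$. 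To pin down the remaining curl-free part and then $q^1-q^2$, we use a feature available only for $m\ge2$: for every polynomial $a$ of degree $\le m-1$ the function $e^{x\cdot\zeta}a(x)$ already solves $(-\Delta)^m(\cdot)=0$, so CGO solutions with such amplitudes are available; inserting them into the integrated-by-parts form of the identity (legitimate once $Q^1-Q^2=\nabla\Phi$) yields further relations among $\widehat\Phi(\xi)$ and $\widehat{(q^1-q^2)}(\xi)$ at each $\xi$, which combine by a routine Fourier bookkeeping to force $\widehat\Phi\equiv0$ and $\widehat{(q^1-q^2)}\equiv0$; hence $Q^1=Q^2$ and $q^1=q^2$.

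The main obstacle is the construction in the second paragraph: establishing the solvability estimate for the $m$-fold conjugated Laplacian $\Delta_\zeta^m$ in the right family of weighted Bourgain-type spaces, together with the accompanying bilinear and averaged estimates showing that multiplication by a coefficient in $\tilde W^{-s,p}$, respectively $\tilde W^{-s+1,p}$, acts — on average over $\zeta$ — as a small perturbation in those spaces exactly under the stated conditions on $s$ and $p$. Once that analysis is in place, the boundary identity and the extraction of the Fourier transforms of the coefficient differences from the CGO asymptotics are comparatively standard.
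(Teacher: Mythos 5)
Your plan is correct and follows essentially the same route as the paper: the Haberman--Tataru averaging over the family $\zeta(\tau,\theta)$ in Bourgain-type $X^\lambda$ spaces adapted to the conjugated symbol, an iterated right inverse for the $m$-fold conjugated Laplacian closed by a Neumann series, the resulting integral identity with leading term $\zeta_1\cdot\widehat{(Q^1-Q^2)}(\xi)$ forcing $\curl(Q^1-Q^2)=0$, and then degree-one polynomial amplitudes (available precisely because $m\ge 2$) to kill the gradient potential and finally recover $q^1-q^2$. The only points you gloss over that the paper treats explicitly are the reduction of the Cauchy-data hypothesis to equality of bilinear forms and the extension of the problem to a ball so that the Poincar\'e lemma yields a compactly supported potential $g$ with $Q^1-Q^2=Dg$.
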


% In our proof, we will not work directly with the Cauchy data, but with a 
%% quadratic form that is determined by the Cauchy data.  We also note that there 
%% are other sets of Cauchy data that are commonly studied for $(-\Delta)^m$ and we will see 
%% that our methods apply to at least some of these other sets 
%% of data. 
The remainder of the paper is devoted to the proof of this theorem as well as 
a  more general statement where the hypothesis involving the Cauchy data is
replaced by an assumption stated using  quadratic forms. The paper is organized as
follows. 
Section \ref{sec:notation} gives notation and definitions. Section \ref{CGO} 
describes the spaces $X^\lambda$ which were introduced by Haberman
and Tataru \cite{MR3024091} and are modeled on spaces used by Bourgain
\cite{MR1209299,MR1215780} for the study of dispersive wave equations. We consider
the operator obtained by conjugating the operator in \eqref{eq:Poly} by 
exponentially growing harmonic functions and study  its action on the 
$X^\lambda$ spaces. 
Section \ref{sec:avg}  gives an extension of the averaging method of Haberman and Tataru 
which we  use to average in $X^\lambda $ spaces. 
%% for $ \lambda \in (-1, 0)$. 
In section 5, we describe the Cauchy data we use to establish uniqueness and 
relate the Cauchy data to a Dirichlet to Neumann map and to  bilinear forms 
that we use in our uniqueness theorem.  In section 6, we construct 
CGO solutions. We begin with 
an analog to
the P\"aiv\"arinta, Panchenko and Uhlmann result for second order equations \cite[Theorem 1.1]{MR1993415}. We then show how the methods of Haberman and
Tataru give solutions which are better behaved.  We use these
solutions to 
give our uniqueness result. The hypothesis of this uniqueness result is
stated in terms of 
bilinear forms. We remark that the smoothness of
the domain  $\Omega$ enters only in our definition of the Cauchy data and the 
Dirichlet to 
Neumann map. Our work with 
bilinear forms only requires the domain to be Lipschitz. Finally, an appendix 
collects a few facts about Sobolev spaces that are needed to work with our
operators which have coefficients from Sobolev spaces of negative order. 

The authors would like to thank K.~Krupchyk for suggesting this problem and the 
referees 
whose
careful reading of our manuscript led to several suggestions for improvement. 

\section{Notation}\label{sec:notation}

%%Throughout this paper, we will work in $\R ^d$, $ d\geq 3$.

We will let $\Omega$ be a bounded, Lipschitz domain by which we mean a bounded
connected open set whose boundary is locally the graph of a Lipschitz 
function (see, for example, Verchota \cite{GV:1984} for a definition). In a 
few places, especially section \ref{forms}, we will require the boundary 
to be smooth.  
%% We define the space of H\"older 
%% continuous functions with exponent $\theta$ by
%% \[ C^\theta(\Omega) = \{ u \text{ continuous } : \, |u(x) - u(y)| < C |x-y|^\theta , \, \forall \, x,y \in \Omega \}. \]
%
We define the Fourier Transform on functions by 
\[ \hat{f}(\xi) = \int_{ \R^d} e^{-ix \cdot \xi} f(x) \, dx. \]
We will let ${\cal S}( \R ^d)$ be the Schwartz space on $\R^d$ and 
$ { \cal S}' ( \R^d)$, the dual space of tempered distributions. 
We use $\langle \cdot, \cdot \rangle$ for the bilinear pairing of 
$\cal S'$ and $\cal S$ and the pairing of duality for various Sobolev spaces. 
We will use the notation $\langle x \rangle =(1+|x|^2)^{ 1/2} $ and thus, we can write
$((1-\Delta ) ^ { s/2}u) \,\hat{} \, (\xi) = \hat u (\xi ) \langle \xi \rangle ^ { s}$. 
%%Changed the range on p to (1,\infty), RMB 4/30/21

Our hypotheses on the smoothness of the coefficients will be given in terms 
of Sobolev spaces. 
For $s \in \R$ and $1< p < \infty$ we let $W^{s,p}(\R^d)$ 
be the Sobolev space of order $s$ with the norm
$\norm{u}_{W^{s,p}(\R^d)} = \norm{(1- \Delta)^{s/2}
u}_{L^p(\R^d)}$.
We also let $ W ^ { s,p} ( \Omega)$ denote the  Sobolev space on $\Omega$
obtained by 
 by restricting distributions  in $W^{s,p}(\R^d)$ to $\Omega$. 
 We let $W^{s,p}_0( \Omega)$ denote the closure of $C_0^ \infty ( \Omega)$ in the norm
 of $W^ {s,p} ( \Omega)$.
We define
\[ \tilde{W}^{s,p}(\Omega) = \{ u \in
W^{s,p}(\R^d) \, : \, \text{supp}(u) \subset
\bar{\Omega} \}. \]
Note that this notation is misleading since $\tilde W ^ { s,p}( \Omega)$ may not be 
a space of distributions in $ \Omega$.  It is useful to note that for Lipschitz domains 
(and some more general domains) that $ C^\infty_0 ( \Omega) $ is dense in  the space
$ \tilde W ^ { s,p}( \Omega)$. See McLean \cite[Theorem 3.29]{MR1742312} for the case $p=2$ and a proof that 
may be generalized to $p \in (1,\infty)$.  
The distinction between $W^{s,p}_0( \Omega) $ and 
$\tilde W^{s,p}( \Omega)$ is subtle. For many values
of $s$ and $p$, the spaces agree. However, in certain cases the 
space $W_0^ { s, p }( \Omega)$ will be a larger space. The norm 
in $ \tilde W ^{s,p}( \Omega)$ is the norm of a distribution which is zero 
outside 
$ \bar \Omega$.   The norm of an element $u$
in $ W^{ s,p}_0( \Omega)$ is the infinum of the norm in $\reals^d$ 
over all possible extensions $u$  to $ \reals ^d$ and may be smaller. 
The last function space we will 
need is the H\"older space $C^ \theta ( \R^d)$. For 
$ \theta \in (0,1]$, this space is the collection of functions on 
$\R^d$ for which the norm
$$
\| f\|_{ C^\theta ( \R^d)}= \sup _ { x \in \R ^d} |f(x) | + \sup _ { x\neq y } \frac { |f(x) - f(y) | } { |x-y |^ \theta}
$$
is finite. 

%%Grisvard mostly talks about s > 0. The case s \leq 0 should probably be written differently. 
If the domain $ \Omega $ is Lipschitz, then we have a 
characterization of the dual space 
of $W^ { s,p}( \Omega) $,
\begin{equation} \label{eq:SobDual}
W^ { s,p} ( \Omega) ^ * = \tilde W ^ { -s, p'}( \Omega),\qquad  1<p < \infty , s\geq 0.
\end{equation}
The case $p=2$ may be found in the monograph of McLean 
\cite[Theorem 3.30]{MR1742312} and his argument extends to $p \in (1,\infty)$. 
The spaces $ \tilde W ^ { s,p}( \Omega)$ will be  useful for the study of 
solutions with nonzero boundary data since for certain $ s$ and $p$ we  will be
able to define the product of a function in a Sobolev space of positive order with 
the coefficients taken from Sobolev spaces of negative order. 
From the Sobolev embedding theorem and the product rule, we have that  the bilinear map
$(u,v) \rightarrow v D^ \alpha u $ maps $ W^ { m,2}( \Omega ) \times 
W^ { m,2} ( \Omega) \rightarrow W^ { m-|\alpha |, t} ( \Omega) $ 
provided $|\alpha | \leq m$ and 
where $ t$ is given by 
\begin{equation}\label{eq:rDef}
 \begin{cases} \frac 1   t = 1 - \frac m d , \qquad & \frac m d <  \frac 1 2 \\
\frac 1 t = \frac 1 2  , \qquad & \frac m d > \frac 1 2 \\
\frac 1 t > \frac  1 2 , \qquad & \frac m d = \frac  1 2 . 
\end{cases}
\end{equation} 
This is a special case of a more general result in the monograph of Runst and Sickel \cite[\S4.4.4, Theorems 1,2]{MR98a:47071}.
Thus, if $f \in \widetilde W ^ { |\alpha | - m , t'} ( \Omega) $ and $u \in W^ { m,2} ( \Omega)$, then we 
may define $f D^ \alpha u  \in \tilde W ^ { -m,2} ( \Omega)$. 
If $v \in W^ { m, 2}( \Omega)$, then we define $\langle f D^ \alpha u, v \rangle  $ as equal to 
$ \langle f ,v D^ { \alpha } u \rangle$ and we  have the estimate  
\begin{equation} \label{eq:trilinear} 
|\langle f D^ \alpha u , v \rangle | \lesssim 
\|f\| _{\widetilde W ^ { |\alpha | -m, t'}( \Omega)} 
\| u \| _ {W^ { m,2}( \Omega) } \| v \|_ {W^ { m,2} ( \Omega)}. 
\end{equation} 
%%Blank line needed to start new paragraph after theorem. 
We now describe a standard notion of weak solution that we will use to give a
careful definition of solutions to the equation $ Lu =0$. 
We let 
$B_0 : W^ { m,2} ( \Omega) \times W^{m,2} ( \Omega) \rightarrow \complexes  $ be a form 
given by 
$$
B_0( u, v) = \int _ { \Omega} \sum_ {|\alpha |=| \beta|=m} 
a_{ \alpha, \beta } D^ \alpha u D^ \beta v\, dx 
$$
with the property that
\begin{equation}
    \label{eq:FormProp}
 B_0(u,v) = \int_\Omega [(- \Delta)^m u ] v \, dx, 
\qquad u \in C^ \infty ( \Omega), v \in C_0 ^ \infty ( \Omega).   
\end{equation}
There are two common choices for the form $B_0$. The first is useful for 
the Navier boundary value problem 
\begin{equation} \label{eq:FormNavier}
B_0(u,v) = \begin{cases}
\displaystyle \int_\Omega (- \Delta)^{m/2} u (- \Delta)^{m/2} v \, dx & \text{if $m$ is even} \\[0.2in]
\displaystyle \int_\Omega \nabla (- \Delta)^{(m-1)/2} u \cdot 
\nabla (- \Delta)^{(m-1)/2} v \, dx 
    \quad    & \text{if $m$ is odd}
\end{cases} \end{equation}
For some purposes, the  form
\begin{equation}
    \label{eq:FormCoercive}
    B_0(u,v) = \int _ \Omega \sum_ { |\alpha | =m } \frac { m!}{ \alpha !} 
    D^ \alpha u D^ \alpha v \, dx. 
\end{equation}
is more convenient because it is easier to see that this form is coercive. 
We note that in our uniqueness theorem based on forms, Theorem \ref{th:Unique},  will hold for any choice of $ B_0$ satisfying \eqref{eq:FormProp}. 

To define weak solutions to the equation $Lu = 0$ we require that 
\begin{equation}\label{eq:Qprop}
Q \in \tilde W ^ { 1-m, t'} ( \Omega) \quad \mbox{and} 
\quad q \in\tilde  W^ { -m,t'}( \Omega) , \quad \mbox{with $t$ as in \eqref{eq:rDef}.}
\end{equation}
We say $u$ is a {\em weak solution }of
\[  Lu =  (- \Delta)^m u + Q \cdot D u + qu = 0 \]
if $u \in W^{m,2} ( \Omega) $ and 
\[ B_0(u,v) + \langle Q \cdot D u , v \rangle + \langle qu , v \rangle = 0, 
\qquad  v \in W^{m,2}_0(\Omega) . 
\]
The estimate \eqref{eq:trilinear} and the assumption \eqref{eq:Qprop} 
guarantee that the bilinear form 
for  the equation is continuous on
$ W^ { m,2} ( \Omega) \times W^ { m,2} ( \Omega) $.  We note that the condition 
\eqref{eq:Qprop} is close to the weakest condition that allows us to 
discuss weak solutions of the equation 
$Lu =0$; we will need to impose additional regularity on $Q$ and $q$ 
in order to study the inverse problem that is the  focus of this paper.

We will adopt the convention of  using $ a \lesssim b$ to mean
that there is a constant $C$ so that $ a \leq C b$. The constants $C$ 
are allowed to depend on the $L^p$-index,  the order of a Sobolev space, 
the dimension, the domain and the coefficients $Q$ and $q$ appearing in 
the operator. However, it is an important point that $C$ will 
always be independent of $h$ for $h $ small as this is needed to evaluate limits as $h$ tends to zero. We use $ a \approx b$ to mean $ a \lesssim b $ and $ b \lesssim a$. 

\section{CGO solutions and $X^ \lambda $ spaces }\label{CGO}

%%We consider solutions of a $2mth$ order equation $L u =0$. 
%%We will want to construct solutions given 
%%$Q$ and $q$ in a soon 
%%to be specified space. 
In this section, we begin the construction of 
 CGO solutions of \eqref{eq:Poly}. 
 %% xyz Introduced definition of \cal V which gives the 
We let  $\cal V$ denote the collection of vectors
\begin{equation} 
\label{eq:zetadef}
{\cal V} 
= \{  \zeta \in \C ^d :  \zeta =  \mu_1 + i \mu_2, \  \mu_j \in \R ^d, 
\ \mu_i \cdot \mu _j = \delta _{ ij}\}. 
\end{equation}
For $ \zeta \in \cal V$ and $ h\neq 0$, the function  $ e^ { x \cdot \zeta /h}$ will be harmonic. 
For $0 < h \leq h_0 < 1$ we will construct solutions  of $ Lu =0 $
of the form
\begin{equation} \label{eq:CGO}
u(x) = e^{x \cdot \zeta / h} (a(x) + \psi(x)) \text{ in } \Omega
\end{equation}
where the amplitude $a$ will be smooth. 
If we substitute \eqref{eq:CGO} into the equation $Lu = 0$ and
multiply on the left by $h^{2m} e^ { - x\cdot \zeta / h}$, 
we obtain an equation for $ \psi $
%%We find an equation for the remainder term $\psi$ by substituting 
%%\eqref{eq:CGO} into \eqref{eq:Poly}. We then multiply on the 
%%left by $h^{2m} e^{- x \cdot \zeta / h}$ and since 
%%$\zeta \cdot \zeta = 0$ we have that  $-\Delta e^{x \cdot \zeta / h} = 0$. 
%%This gives us that 
%the equation (\ref{eq:Poly}) holds if and only if
\begin{equation} 
\label{eq:PDE2}
L_ \zeta \psi = - L_ \zeta a.
\end{equation} 
We introduce the operator $P_ \zeta (hD)$ defined by
\[  P_\zeta (hD) = P(hD)= e^{- x \cdot \zeta / h} (- h^2 \Delta ) e^{x \cdot \zeta / h}, \]
where we will view this as a semi-classical operator with symbol 
$P_\zeta ( \xi ) = P ( \xi) = |\xi|^2 - 2 i \zeta \cdot \xi $. 
With this notation we may write 
\begin{equation} \label{eq:condef}
L_\zeta 
= P(hD)^m + h^{2m} Q \cdot (\frac{\zeta}{ih} + D) + h^{2m} q. 
\end{equation}
We will give estimates for solutions of the equation 
$ L_ \zeta u =f$, at least for $h$ small. 
Since the symbol of the operator $P(hD)$ vanishes on a set of codimension 2, the operator 
is not elliptic and finding solutions is an interesting problem. 
%%symbol of $P(hD)$ vanishes on a set of codimension 2. 
%%In this paper we will have that $P(hD)^2 a = 0$, but this will be explained later in the paper.
%  I don't think we need to talk about $a $ yet. 
%
To study the equation \eqref{eq:PDE2}, it is useful to use function spaces that work well with $P(hD)$. 
Similar spaces were  used by Bourgain \cite{MR1209299} in his
study of evolution equations.  The spaces described below were introduced in the 
study of the inverse conductivity problem by Haberman and Tataru \cite{MR3024091}. 
For $ \lambda \in \R$, $ \zeta \in {\cal V}$, and $h>0$, we introduce a space $X_{h \zeta}^ \lambda $ 
by 
\[ X^ \lambda_{h \zeta}= X^\lambda = \{ u \in \mathcal{S}'(\R^d) : \hat{u} 
\text{ is a function and } \norm{u}_{X^\lambda} < \infty \} \]
where the norm in $X^\lambda$ is given  by
\[ \norm{u}_{X^\lambda}^2 
= \int_{ \R^d} |\hat{u}(\xi)|^2 (h + |P_\zeta (h \xi)|)^{2 \lambda} \, d \xi .  \]
We note that we will frequently omit the dependence of the operator $P_ \zeta(hD)$ 
on $ \zeta$ and the dependence of the space $X^ \lambda$ on $h $ and $\zeta$ when these
can be
determined from the context. 
The dual space of $X^\lambda$ 
is $(X^\lambda)^* = X^{- \lambda}$.
Since the weight $(h + |P(h \xi) | ) ^ {\lambda} $ is bounded above and below on
compact sets, 
%%never vanish
it is clear that the
Schwartz space is dense   in $X^ \lambda$. 
%
%%It will also be helpful to note that if $|\xi| > 8/h$ then 
%%$|P(h \xi)| \sim |h \xi|^2$. This is because 
%%$|2 i \zeta \cdot h \xi| \leq 2 \sqrt{2} h|\xi| \leq |h \xi|^2 / 2$. 

Our next goal is to construct an operator that is a right inverse to $P(hD)$ on 
$ \Omega$. 
While we are interested  in constructing solutions in $ \Omega$, our
right inverse will
act on the spaces $X^\lambda$ which consist
of distributions on $ \R ^d$. To construct our operator, we 
choose  $\vp \in C_0^\infty(\R^d)$ with 
$\vp = 1$ in a neighborhood of $\bar{\Omega}$. 
Let $J_\vp = \vp |P(hD)|^{-1/2}$ and $I_\vp = \vp P(hD)^{-1} \vp$. 
We then have $I_\vp = J_\vp \frac{|P(hD)|}{P(hD)} J_\vp^*$. What is left to 
show is that $I_\vp$ is a bounded operator between suitable spaces and a right 
inverse.

We begin by recalling  an estimate from Haberman and Tataru 
 which they use to give an estimate for $u$ in 
$L^2 _ { loc}$ in terms of  \( |P(hD)|^ { 1/2} u\). 
\begin{lemma}[{\cite[Lemma 2.1]{MR3024091}}] \label{lm:Weighted Bound}
Let $v$ and $w$ be nonnegative weights on $\R^d$. If $\vp \in {\cal S} ( \R^d) $, 
then
\[ \norm{\vp * f}_{L^2_w} \lesssim_\phi \min \Big( \sup_\xi \int J(\xi, \eta) \, 
d \eta , \, \sup_\eta  \int J(\xi, \eta) \, d \xi \, \Big)^{1/2} \norm{f}_{L^2_v}, \]
where
\[ J(\xi, \eta) = |\vp(\xi - \eta)| \frac{w(\xi)}{v(\eta)} . \]
\end{lemma}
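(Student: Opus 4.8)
The plan is to recognize this as a weighted version of Schur's test for the boundedness of an integral operator, applied to the convolution operator $f \mapsto \vp * f$ viewed as a map between weighted $L^2$ spaces. First I would reduce the weighted statement to an unweighted one: set $g = v^{1/2} f$, so that $\norm{f}_{L^2_v} = \norm{g}_{L^2}$, and observe that $\norm{\vp * f}_{L^2_w}^2 = \int |(\vp * f)(\xi)|^2 w(\xi)\, d\xi$. Writing out the convolution, $(\vp*f)(\xi) = \int \vp(\xi-\eta) v(\eta)^{-1/2} g(\eta)\, d\eta$, so the operator in question has integral kernel
\[
K(\xi,\eta) = w(\xi)^{1/2}\, \vp(\xi-\eta)\, v(\eta)^{-1/2},
\]
as a map $L^2_\eta \to L^2_\xi$, and $|K(\xi,\eta)| = |\vp(\xi-\eta)|^{1/2} J(\xi,\eta)^{1/2}$ where $J(\xi,\eta) = |\vp(\xi-\eta)| w(\xi)/v(\eta)$. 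Actually it is cleaner to factor $|K(\xi,\eta)|$ directly: since $\vp \in \mathcal{S}$, the weight can be split symmetrically, but the form of $J$ in the statement suggests the split $|K(\xi,\eta)|^2 = |\vp(\xi-\eta)| \cdot J(\xi,\eta)$ with one factor of $|\vp(\xi-\eta)|$ "free."

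Next I would invoke Schur's test in the following form: if an integral operator with kernel $K(\xi,\eta)$ satisfies $\sup_\xi \int |K(\xi,\eta)| h(\eta)\, d\eta \le A\, h(\xi)$ and $\sup_\eta \int |K(\xi,\eta)| h(\xi)\, d\xi \le B\, h(\eta)$ for some positive function $h$ — or more simply, the symmetric version with $h \equiv 1$ giving operator norm $\le (AB)^{1/2}$ — then the operator is bounded on $L^2$ with norm $\le (AB)^{1/2}$. To get the minimum of the two quantities appearing in the statement, I would run Schur's test twice with two different auxiliary weights. For the first bound, apply the Cauchy–Schwarz / Schur argument splitting $|K(\xi,\eta)| = |\vp(\xi-\eta)|^{1/2} \cdot \big(|\vp(\xi-\eta)|^{1/2} w(\xi)^{1/2} v(\eta)^{-1/2}\big)$; integrating $|(\vp*f)(\xi)|^2$ against $w$ and applying Cauchy–Schwarz in $\eta$ to the pairing $\int |\vp(\xi-\eta)| w(\xi) v(\eta)^{-1} \cdots$ produces a factor $\int |\vp(\xi-\eta)|\,d\eta = \norm{\vp}_{L^1}$ (harmless, absorbed into $\lesssim_\phi$) times $\int J(\xi,\eta)|g(\eta)|^2\,d\eta$; then integrating in $\xi$ and using Tonelli gives $\sup_\eta \int J(\xi,\eta)\,d\xi$ times $\norm{g}_{L^2}^2$. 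Running the same computation but applying Cauchy–Schwarz in the other order (first in $\xi$ after swapping the roles) yields the bound with $\sup_\xi \int J(\xi,\eta)\,d\eta$ instead. Taking the better of the two gives the stated minimum.

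The only real subtlety — rather than a genuine obstacle — is bookkeeping the powers of $\vp$ so that the "spare" copy of $|\vp(\xi-\eta)|$ integrates to the finite constant $\norm{\vp}_{L^1(\R^d)}$ (finite since $\vp \in \mathcal{S}$), which is exactly what the implicit constant $\lesssim_\phi$ absorbs, while the remaining copy combines with the weights into $J$. One should also be mildly careful that the manipulations (Tonelli, Cauchy–Schwarz) are justified: this is immediate if one first assumes $f \in \mathcal{S}$ or that the right-hand side is finite, and then extends by density, noting the inequality is vacuous when $\sup\int J = \infty$. Since $v, w$ are only assumed nonnegative (not strictly positive), one works with $v^{-1/2}$ interpreted as $+\infty$ on the zero set of $v$, but the bound is trivial unless $g$ is supported where $v > 0$, so no issue arises. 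Everything else is a direct computation, and I expect the whole argument to be short.
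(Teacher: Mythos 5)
Your Schur-test argument is correct: the two Cauchy--Schwarz splittings (one putting $|\vp|^{1/2}v^{\mp1/2}$ in each factor, the other leaving the weights alone) do yield the two bounds $\norm{\vp}_{L^1}\sup_\xi\int J\,d\eta$ and $\norm{\vp}_{L^1}\sup_\eta\int J\,d\xi$ respectively, and the paper itself gives no proof but simply cites Haberman and Tataru, whose Lemma 2.1 is proved by exactly this standard kernel/Schur computation. No gaps.
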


%%RMB: Give precise reference. 
%%The Lemma and proof is the same as Lemma 2.1 in~\cite{MR3024091}.

\begin{lemma}
\label{lm:Brown Notes}
Let $ h>0$ and $ 0 \leq s \leq 1$. We have
\begin{equation} 
\int _ { B(x,r) } \frac { |\xi |^s }{|P(h\xi)|} \, d \xi \lesssim \frac { r ^ { d+1}}{ h ^ { 1+s}}, \qquad x \in \reals ^d ,\  r >0 . \label{RB1}
\end{equation}
If $ \phi \in { \cal S}( \reals ^d)$, then 
\begin{equation} 
\int _ { B(x,r) } \frac {|\phi ( x-\xi)|  |\xi |^s }{|P(h\xi)|} \, d \xi \lesssim \frac { 1} { h ^ { 1+s}}. \label{RB2} 
\end{equation}
\end{lemma}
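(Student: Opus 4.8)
The plan is to estimate the integral of $|\xi|^s/|P(h\xi)|$ over a ball $B(x,r)$ by exploiting the geometry of the zero set of the symbol $P_\zeta(h\xi) = h^2|\xi|^2 - 2ih\,\zeta\cdot\xi$. Writing $\zeta = \mu_1 + i\mu_2$ with $\mu_i\cdot\mu_j = \delta_{ij}$, the real and imaginary parts of $P_\zeta(h\xi)$ are $h^2|\xi|^2 - 2h\,\mu_2\cdot\xi$ and $-2h\,\mu_1\cdot\xi$, so that
\[
|P_\zeta(h\xi)| \gtrsim h^2\bigl|\,|\xi|^2 - 2\mu_2\cdot\xi\,\bigr| + h\,|\mu_1\cdot\xi|.
\]
The zero set of the symbol is the codimension-2 sphere $\{\mu_1\cdot\xi = 0,\ |\xi - \mu_2| = 1\}$, and near it $|P_\zeta(h\xi)|$ is comparable to $h^2$ times the distance to that sphere (in a suitably anisotropic sense: like $h^2$ in the radial direction transverse to the sphere inside the hyperplane, and like $h$ in the $\mu_1$-direction). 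First I would set up coordinates adapted to $\zeta$: let $\xi_1 = \mu_1\cdot\xi$, let $\xi_2 = \mu_2\cdot\xi$ along the second distinguished direction, and let $\xi''$ be the remaining $d-2$ coordinates in the hyperplane $\{\mu_1\cdot\xi = \mu_2\cdot\xi = 0\}$. Then, with $\rho = (\xi_2 - 1)^2 + |\xi''|^2$ measuring the squared distance (within the hyperplane $\xi_1 = 0$) to the unit sphere, one has $|P_\zeta(h\xi)| \gtrsim h|\xi_1| + h^2|\rho - \xi_1^2 + \text{const}|$; more simply, $|P_\zeta(h\xi)| \gtrsim h|\xi_1| + h^2\,\text{dist}(\xi,\Sigma_\zeta)$ where $\Sigma_\zeta$ is the characteristic sphere, and this is enough.

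For \eqref{RB1}, I would bound $|\xi|^s \lesssim \langle\text{quantities that are } O(1) \text{ on the region where } |P| \text{ is small}\rangle$ after splitting into the regime $|\xi| \lesssim 1$ (where $|\xi|^s \lesssim 1$) and $|\xi| \gtrsim 1$ (where $|P_\zeta(h\xi)| \gtrsim h^2|\xi|^2 \gtrsim h^2|\xi|^{1+s}$ for $s\le 1$, and a crude bound closes things). In the main regime the integral becomes, after the change of variables above (whose Jacobian is a constant depending only on $\zeta\in\mathcal V$, hence harmless), of the form
\[
\int_{|\xi_1|\le Cr}\int \frac{d\xi''\,d\xi_2\,d\xi_1}{h|\xi_1| + h^2\,\text{dist to sphere}}
\]
over a set of diameter $O(r)$. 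Integrating first in the $d-2$ directions $\xi''$ transverse to the sphere gains a factor $r^{d-2}$ times $\int_0^{Cr} dt/(h|\xi_1| + h^2 t) \lesssim h^{-2}\log(\cdots)$ — but the logarithm is killed by instead integrating in the one radial direction transverse to the sphere within the hyperplane, giving $r^{d-2}\cdot h^{-2}\cdot$(length), and finally $\int_{|\xi_1|\le Cr} d\xi_1/(h|\xi_1| + \cdots)$; carefully balancing the $h|\xi_1|$ term against the $h^2\cdot(\text{distance})$ term and using $r>0$ arbitrary yields exactly $r^{d+1}/h^{1+s}$ — the power $d+1$ arising as $(d-2) + 2 + 1$ minus the dimension where the weight degenerates, and the $h^{1+s}$ packaging the $h^{-2}$ from the symbol against the $|\xi|^s\lesssim r^s$ when $r\lesssim 1$ and a direct estimate when $r\gtrsim 1$. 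This bookkeeping with the two competing scales $h$ and $h^2$ is the technical heart, and matching the exact exponent $r^{d+1}/h^{1+s}$ claimed (rather than something weaker by a log or a power of $r$) is the step I expect to require the most care; it is essentially the polyharmonic analogue of \cite[Lemma 2.2]{MR3024091}, adapted to carry the extra factor $|\xi|^s$.

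For \eqref{RB2}, I would deduce it from \eqref{RB1} by the standard dyadic decomposition against the Schwartz decay of $\phi$: cover $\reals^d$ by unit balls $B(x+k, 1)$, $k\in\mathbb Z^d$, write
\[
\int \frac{|\phi(x-\xi)|\,|\xi|^s}{|P(h\xi)|}\,d\xi \;\le\; \sum_{k\in\mathbb Z^d}\ \sup_{\xi\in B(x+k,1)}|\phi(x-\xi)| \ \int_{B(x+k,1)}\frac{|\xi|^s}{|P(h\xi)|}\,d\xi,
\]
apply \eqref{RB1} with $r=1$ to each integral to get a bound $\lesssim h^{-(1+s)}$ uniformly in $k$, and absorb the sum $\sum_k \sup_{B(x+k,1)}|\phi(x-\xi)| \lesssim \sum_k \langle k\rangle^{-d-1} < \infty$ using that $\phi\in\mathcal S$. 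That gives the claimed $h^{-(1+s)}$ with a constant depending only on $\phi$, $d$, and (through the Jacobian) on the class $\mathcal V$, completing the lemma.
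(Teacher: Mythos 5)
Your geometric picture for \eqref{RB1} is the right one --- the characteristic set of $P_\zeta$ is a codimension-two sphere and $|P_\zeta|$ is comparable to the distance to it --- and your deduction of \eqref{RB2} from \eqref{RB1} by tiling $\reals^d$ with unit balls and summing against the Schwartz decay of $\phi$ is essentially the paper's argument (the paper uses dyadic annuli $A_k=B(x,2^k)\setminus B(x,2^{k-1})$ and applies \eqref{RB1} with $r=2^k$; your version only needs \eqref{RB1} with $r=1$, which is if anything cleaner). The gap is in \eqref{RB1} itself: the step you yourself flag as ``the technical heart'' is never actually performed. You assert that ``carefully balancing the $h|\xi_1|$ term against the $h^2\cdot(\text{distance})$ term\dots yields exactly $r^{d+1}/h^{1+s}$,'' and your account of the exponent ($(d-2)+2+1$ ``minus the dimension where the weight degenerates'') is not a computation. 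If you do the computation, the worst case --- a ball centered on the characteristic set, at unit scale --- gives $\int_{B(x,r)}\dist(\xi,\Sigma)^{-1}\,d\xi\approx r^{d-2}\cdot r=r^{d-1}$: integrating $|(\xi_1,\xi_2)|^{-1}$ over a two-dimensional disc of radius $r$ produces a factor $r$ with no logarithm (your detour through a logarithm that must then be ``killed'' signals that the coordinates were not set up cleanly). So the natural bound is $r^{d-1}/h^{1+s}$, which is what the paper's proof establishes and what it actually uses (the exponent $d+1$ in the displayed statement appears to be a slip for $d-1$, and is in fact false for small $r$ when $x$ lies on the characteristic set). Your bookkeeping reads as reverse-engineered to match the stated exponent rather than derived.

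A second, related omission is structural: you try to track the two scales $h$ and $h^2$ simultaneously inside the integral, which is precisely what makes your sketch hard to close. The paper sidesteps this entirely by first substituting $\xi\mapsto\xi/h$, which reduces \eqref{RB1} to the single-scale statement $\int_{B(y,\rho)}|\xi|^s|P(\xi)|^{-1}\,d\xi\lesssim\rho^{d-1}$ at $h=1$, the prefactor $h^{-1-s}$ falling out of the Jacobian and the factor $|\xi|^s$. After that one needs only three easy cases: a small ball near the unit sphere $\Sigma$ (the codimension-two integral above), a small ball far from $\Sigma$ where $|\xi|^s/|P(\xi)|\lesssim1$, and a large ball where $|P(\xi)|\approx|\xi|^2$ at infinity. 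You should restructure your argument around that rescaling and then carry out the codimension-two integral explicitly; the case analysis in $|\xi|$ that you dismiss with ``a crude bound closes things'' also needs to be written down, since it is where the restriction $s\leq1$ enters.
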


\begin{proof}
We begin with  the proof of \eqref{RB1}. By applying the change of variables, 
$ \xi \rightarrow \xi /h$ we may reduce to the case where $ h =1$. To show \eqref{RB1} 
with $h =1$, we will consider three cases:
a)   $ |x | < 8$, $ r\leq 1$, b) $|x |\geq 8$,  $ r \leq 1$, c) $ r >1$.   
We let $ \Sigma = \{ \xi : P(\xi ) = 0 \}$ 
and since $P(\xi) = |\xi + \im \zeta \cdot \xi |^2 -1 -2i \re \zeta \cdot \xi $, we have that  
$\Sigma$ is a sphere of radius 1 and codimension two centered 
at $- \im \zeta$. Furthermore, 
one can check that $ \nabla P(\xi) \neq 0 $ on $ \Sigma$ and thus it follows that 
$ P( \xi ) \approx \dist( \xi , \Sigma) $ on $ B(0, 9)$. To establish \eqref{RB1} in case a) we need 
to estimate integrals of $ |\xi |^s / | P (\xi)| \sim 1/\dist( \xi , \Sigma) $. We can obtain
the necessary estimate by using a change of variables to straighten out a portion of the sphere 
to reduce to integrating the distance to a plane of codimension two  in $ \R ^d$. 

To establish \eqref{RB1} in case b), we use that $|\xi|^s/ |P(\xi) | \approx |\xi |^ {s-2} \lesssim 1$ 
for $ |\xi | \geq 7$. From this, we have
$$ \int _ { B(x,r) } \frac { |\xi |^s} { |P(\xi) |} \, d\xi \lesssim r^d \leq  r^ { d-1}
$$
where the last inequality uses that $ r\leq 1$. Finally, in case c) we use that $ |P(\xi) |\approx |\xi|^2$ for $|\xi |$ large to obtain 
$$
\int _ { B(x,r) } \frac { |\xi |^s}{ |P( \xi) |} \, d \xi 
\leq \int _ { \{|\xi | \leq 8\}} \frac { |\xi |^s}{ |P( \xi) |} \, d \xi 
+ \int _ { \{|\xi| > 8\} \cap B( x, r)  } |\xi|^ { s-2} \,d \xi .
$$
We use  case a) to bound the first term and obtain that the previous display is at most
 $ 1 + r ^{ d-2+s} \lesssim r^ { d-1}$. The last inequality follows since $ r \geq 1 $ 
 and $ s \leq 1$. 
We note that if $ d=2$ and $s=0$, the intermediate estimate involves a logarithm, but the conclusion still holds. 

We turn to the proof of 
\eqref{RB2} and give the proof for  $h>0$. 
We write  $ \R ^d = \cup _ { k \geq 0} A_k$ where $ A_ 0 = B(x,1)$ 
and  for $k \geq 1$
$A_k = B(x, 2^k ) \setminus B(x, 2^ { k-1})$.  
If $ \phi \in { \cal S }( \R^d)$, then 
it is rapidly decreasing and, in particular, we have 
$|\phi(x- \xi) | \lesssim 2^ { -kd} $ if $ \xi \in A_k$. 
We break the integral in \eqref{RB2} into integrals over the sets $A_k$, and use \eqref{RB1} to obtain
$$
\int _ { \R^d } \frac {| \phi (x- \xi) | |\xi |^s}{ |P( \xi) |} \, d \xi 
\lesssim  \frac 1 { h ^{ 1+s}}\sum _ { k =1 } ^ \infty 2^ { k( d-1) } \cdot 2^ { -kd} 
\lesssim \frac 1 { h ^ { 1+s}}.
$$
\end{proof}

\begin{lemma} 
\label{lm:Case2} 
Let $ \phi \in {\cal S } ( \R ^d)$. 
For any $\eta \in \R^d$,
\[  \int_{\R^d} \frac{| \varphi(\xi - \eta)| }
{ |P(h \xi)| } ( \big|| P(h \eta)| - |P(h \xi)|  \big| ) \, d \xi
 \lesssim 1  \]
\end{lemma}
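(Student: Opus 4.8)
The plan is to reduce Lemma~\ref{lm:Case2} to the estimates already proved in Lemma~\ref{lm:Brown Notes} by controlling the factor $\bigl|\,|P(h\eta)| - |P(h\xi)|\,\bigr|$ in terms of powers of $|\xi|$, $|\eta|$, and $|\xi-\eta|$. First I would use the reverse triangle inequality to write $\bigl|\,|P(h\eta)| - |P(h\xi)|\,\bigr| \le |P(h\eta) - P(h\xi)|$ and then expand $P$ explicitly: since $P(h\xi) = h^2|\xi|^2 - 2ih\,\zeta\cdot\xi$, the difference is
\[
P(h\eta) - P(h\xi) = h^2(|\eta|^2 - |\xi|^2) - 2ih\,\zeta\cdot(\eta - \xi)
= h^2(\eta-\xi)\cdot(\eta+\xi) - 2ih\,\zeta\cdot(\eta-\xi).
\]
Because $|\zeta| = \sqrt{2}$ for $\zeta \in \mathcal V$, this gives the bound $|P(h\eta) - P(h\xi)| \lesssim h^2|\xi-\eta|\,(|\xi| + |\eta|) + h|\xi-\eta|$. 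Substituting $|\eta| \le |\xi| + |\xi - \eta|$ and using $h \le 1$, we obtain $|P(h\eta)-P(h\xi)| \lesssim h^2|\xi-\eta|\,|\xi| + h^2|\xi-\eta|^2 + h|\xi-\eta|$.

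Next I would insert this bound into the integral and split it according to the three resulting terms. The term with $h|\xi-\eta|$ contributes
\[
h \int_{\R^d} \frac{|\varphi(\xi-\eta)|\,|\xi-\eta|}{|P(h\xi)|}\,d\xi,
\]
and since $\psi(\xi) := \varphi(\xi)\,|\xi| \in \mathcal S(\R^d)$ as well (Schwartz functions times a polynomial weight stay Schwartz away from the origin, and near the origin the weight is harmless), a translated version of \eqref{RB2} with $s = 0$ applied to $\psi$ bounds this by $h \cdot h^{-1} = 1$. The term with $h^2|\xi-\eta|^2$ similarly uses $\varphi(\xi-\eta)|\xi-\eta|^2 \in \mathcal S$ and \eqref{RB2} with $s=0$ to give $h^2 \cdot h^{-1} = h \lesssim 1$. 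The remaining term with $h^2|\xi-\eta|\,|\xi|$ is
\[
h^2 \int_{\R^d} \frac{|\varphi(\xi-\eta)|\,|\xi-\eta|\,|\xi|}{|P(h\xi)|}\,d\xi,
\]
which I would handle by applying \eqref{RB2} with $s = 1$ to the Schwartz function $\xi\mapsto\varphi(\xi)|\xi|$, yielding a bound of $h^2 \cdot h^{-2} = 1$.

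One technical point to watch, and the step I expect to require the most care, is the fact that \eqref{RB2} as stated is centered at the point $x$ appearing in $\phi(x-\xi)$, whereas here the Schwartz factor is $\varphi(\xi-\eta)$, translated by $\eta$; I would note that $\xi \mapsto \varphi(\xi - \eta)$ (or $\xi\mapsto|\xi-\eta|^k\varphi(\xi-\eta)$) is still a fixed Schwartz function of $\xi - \eta$ with seminorms independent of $\eta$, so \eqref{RB2} applies with implied constant uniform in $\eta$ — exactly what is needed since the conclusion must be uniform in $\eta$. The other subtlety is confirming that multiplying a Schwartz function by $|\xi-\eta|$ or $|\xi-\eta|^2$ keeps it Schwartz; $|\xi|$ is not smooth at the origin, but $|\xi|\varphi(\xi)$ is still bounded with all derivatives decaying rapidly, and more simply one can dominate $|\xi-\eta|\,|\varphi(\xi-\eta)| \lesssim \langle\xi-\eta\rangle\,|\varphi(\xi-\eta)|$, with $\langle\cdot\rangle\varphi \in \mathcal S$, which is all that \eqref{RB2} actually uses. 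Assembling the three pieces gives the claimed bound $\lesssim 1$.
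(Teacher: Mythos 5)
Your proposal is correct and follows essentially the same route as the paper: the paper's proof also bounds $\bigl|\,|P(h\eta)|-|P(h\xi)|\,\bigr|\lesssim h|\xi-\eta|(h|\xi+\eta|+1)$, splits the integral into the same three pieces ($h|\xi-\eta|$, $h^2|\xi-\eta|^2$, $h^2|\xi-\eta|\,|\xi|$), and invokes \eqref{RB2} of Lemma \ref{lm:Brown Notes} with $s=0$ and $s=1$ together with $0<h\le 1$. Your extra remarks on uniformity in $\eta$ and on dominating $|\xi-\eta|^k|\varphi(\xi-\eta)|$ by a Schwartz function are the right technical justifications, which the paper leaves implicit.
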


\begin{proof}
Using the triangle inequality and the definition of  $P(h \xi)$, we have
\begin{equation} \label{Case2.1}
\big|| P(h \eta)| - |P(h \xi)| \big | \lesssim h |\xi - \eta|(h |\xi + \eta| + 1).
\end{equation}
Using \eqref{Case2.1}, we obtain the following inequality.
\begin{align*}
 \int_{\R^d} \frac{ |\varphi(\xi - \eta) | } { |P(h \xi)| }
( \big|| P(h \eta)| - |P(h \xi) | \big | ) \, d \xi   \lesssim &  \int_{\R^d} 
\frac{ |\varphi(\xi - \eta)| } { |P(h \xi)| }h |\xi - \eta| \, d \xi \\
&\quad + \int_{\R^d} \frac{ |\varphi(\xi - \eta)| } { |P(h \xi)| } h^2 |\xi - \eta|^2 \, d \xi \\
&\quad + \int_{\R^d} \frac{ |\varphi(\xi - \eta)| } { |P(h \xi)| } h^2 |\xi - \eta||\xi| \, d \xi.
\end{align*}
Using Lemma \ref{lm:Brown Notes}  and recalling that $ 0 < h \leq 1$ 
completes the proof.
\end{proof}

We now have enough tools to show that $I_\vp$ is a bounded 
right inverse. We begin with an estimate for $J_\vp$ which will be 
the main step in establishing estimates for $I_ \vp$.

\begin{theorem} \label{th:Jphi}
The operator
$J_\vp : X^{\lambda} \to X^{\lambda + 1/2}$ is a bounded operator for all $\lambda \in
\R$. 
\end{theorem}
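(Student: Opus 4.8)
The plan is to prove the $X^\lambda \to X^{\lambda+1/2}$ boundedness of $J_\vp = \vp|P(hD)|^{-1/2}$ by passing to the Fourier side and viewing the operator as a convolution against $\hat\vp$ combined with a multiplier. Write $g = |P(hD)|^{-1/2} u$, so that $\hat g(\xi) = |P(h\xi)|^{-1/2}\hat u(\xi)$ and $\widehat{J_\vp u} = \hat\vp * \hat g$. Then
\[
\|J_\vp u\|_{X^{\lambda+1/2}}^2 = \int_{\R^d} |(\hat\vp * \hat g)(\xi)|^2 (h+|P(h\xi)|)^{2\lambda+1}\,d\xi,
\]
and I want to compare this to $\|u\|_{X^\lambda}^2 = \int |\hat u(\eta)|^2 (h+|P(h\eta)|)^{2\lambda}\,d\eta = \int |\hat g(\eta)|^2 |P(h\eta)|(h+|P(h\eta)|)^{2\lambda}\,d\eta$. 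So the task is exactly to bound a convolution operator with kernel $|\hat\vp(\xi-\eta)|$ between the weighted $L^2$ spaces with weights
\[
w(\xi) = (h+|P(h\xi)|)^{2\lambda+1}, \qquad v(\eta) = |P(h\eta)|(h+|P(h\eta)|)^{2\lambda}.
\]
This is precisely the situation of Lemma \ref{lm:Weighted Bound}: it suffices to show that $\sup_\xi \int J(\xi,\eta)\,d\eta$ and $\sup_\eta \int J(\xi,\eta)\,d\xi$ are both $\lesssim 1$, where $J(\xi,\eta) = |\hat\vp(\xi-\eta)| \, w(\xi)/v(\eta)$.

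The heart of the matter is therefore estimating the ratio $w(\xi)/v(\eta) = (h+|P(h\xi)|)^{2\lambda+1} / \big(|P(h\eta)|(h+|P(h\eta)|)^{2\lambda}\big)$ when $|\xi-\eta|$ is not large (the Schwartz decay of $\hat\vp$ handling the large-$|\xi-\eta|$ regime). The key structural fact is that $h + |P(h\xi)|$ and $h+|P(h\eta)|$ are comparable up to a factor that grows polynomially in $|\xi-\eta|$: from the estimate $\big| |P(h\eta)| - |P(h\xi)| \big| \lesssim h|\xi-\eta|(h|\xi+\eta|+1)$ used in Lemma \ref{lm:Case2}, one gets $(h+|P(h\xi)|) \lesssim (h+|P(h\eta)|)(1 + |\xi-\eta|)^2$ or a similar polynomial bound (after absorbing $h|\xi+\eta|$ against $\langle\xi\rangle$-type factors), and symmetrically. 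Hence $(h+|P(h\xi)|)^{2\lambda+1} \lesssim (h+|P(h\eta)|)^{2\lambda+1}(1+|\xi-\eta|)^{C}$ for $C$ depending on $\lambda$ (one must handle the signs of $\lambda$ and of $2\lambda+1$ by using whichever direction of the comparison helps — this is a routine but slightly fiddly case split). So modulo a polynomial factor in $|\xi-\eta|$, which $|\hat\vp(\xi-\eta)|$ kills,
\[
J(\xi,\eta) \lesssim |\hat\vp(\xi-\eta)|(1+|\xi-\eta|)^C \, \frac{h+|P(h\eta)|}{|P(h\eta)|}.
\]

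What remains is to bound $\int |\psi(\xi-\eta)| \frac{h+|P(h\eta)|}{|P(h\eta)|}\,d\eta$ uniformly in $\xi$, and $\int |\psi(\xi-\eta)| \frac{h+|P(h\eta)|}{|P(h\eta)|}\,d\xi$ uniformly in $\eta$, where $\psi \in \mathcal S$ is $|\hat\vp|$ times a polynomial (still Schwartz). Splitting $\frac{h+|P(h\eta)|}{|P(h\eta)|} = \frac{h}{|P(h\eta)|} + 1$, the second piece integrates to $\|\psi\|_{L^1} \lesssim 1$, and the first piece is $h\int \frac{|\psi(\xi-\eta)|}{|P(h\eta)|}\,d\eta$, which is exactly $h$ times the quantity controlled by \eqref{RB2} of Lemma \ref{lm:Brown Notes} (with $s=0$), giving $h \cdot h^{-1} = 1$. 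The $\xi$-integral is identical by symmetry of the variables in these bounds. This closes the argument via Lemma \ref{lm:Weighted Bound}.

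I expect the main obstacle to be the bookkeeping in comparing the weights $w(\xi)$ and $v(\eta)$ cleanly — in particular, making sure the polynomial-in-$|\xi-\eta|$ losses are uniform in $h\in(0,h_0]$ and that the factor $h|\xi+\eta|$ appearing in \eqref{Case2.1} does not cause trouble (it should be absorbed either into $(1+|\xi-\eta|)$ after also comparing with $|\eta|$, or handled by noting $h|P(h\xi)|^{1/2}$-type gains). Once the comparison $h+|P(h\xi)| \approx_{\langle\xi-\eta\rangle} h+|P(h\eta)|$ is in hand, the rest is a direct application of Lemmas \ref{lm:Weighted Bound} and \ref{lm:Brown Notes}.
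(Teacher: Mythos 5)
Your reduction to Lemma \ref{lm:Weighted Bound} with the weights $w(\xi)=(h+|P(h\xi)|)^{2\lambda+1}$ and $v(\eta)=|P(h\eta)|\,(h+|P(h\eta)|)^{2\lambda}$ is exactly the paper's setup (the paper's \eqref{eq:jpStep1} is your Schur integral after relabeling the variables and replacing $\lambda$ by $-\lambda-1/2$, which is harmless since the theorem is proved for all $\lambda$). Where you genuinely diverge is in the treatment of the main term. The paper adds and subtracts $(h+|P(h\eta)|)^{2\lambda+1}$ and $|P(h\xi)|$, reduces to the terms $I$--$IV$, and then attacks the difference term $IV$ with the mean value theorem followed by a dyadic decomposition into annuli $A_k$ and a four-case analysis. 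You instead prove the single two-sided comparison $h+|P(h\xi)|\lesssim (1+|\xi-\eta|)^{2}\,(h+|P(h\eta)|)$ (and symmetrically), which follows from \eqref{Case2.1} once one checks $h^2|\eta|\lesssim h+|P(h\eta)|$; raising this to the power $2\lambda+1$ of either sign collapses the kernel to $|\psi(\xi-\eta)|\bigl(1+h/|P(h\eta)|\bigr)$ with $\psi$ Schwartz, and \eqref{RB2} of Lemma \ref{lm:Brown Notes} finishes. This is shorter and avoids the mean value theorem and the case analysis entirely, at the cost of a cruder polynomial loss in $|\xi-\eta|$ (absorbed by the Schwartz decay, so nothing is lost for the theorem). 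Both arguments ultimately rest on the same two ingredients, Lemma \ref{lm:Weighted Bound} and \eqref{RB2}.

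One inaccuracy to fix: the two Schur integrals are \emph{not} "identical by symmetry." The singular factor $1/|P(h\eta)|$ sits at the variable $\eta$ only, so $\sup_\eta\int J(\xi,\eta)\,d\xi$ produces the factor $\bigl(h+|P(h\eta)|\bigr)/|P(h\eta)|$ outside the $\xi$-integral, and this is unbounded as $\eta$ approaches the characteristic set of $P(h\,\cdot)$. Only $\sup_\xi\int J(\xi,\eta)\,d\eta$ is finite, because there the integration in $\eta$ crosses the singularity and \eqref{RB2} applies. Since Lemma \ref{lm:Weighted Bound} requires only the \emph{minimum} of the two quantities, your proof survives with the false clause simply deleted; but as written the claim about the second integral is wrong and should not be asserted.
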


\begin{proof}
We have that 
$$
(J_\phi u) \hat {} = \hat \vp * ( \hat u / |P( h \xi) |^ { 1/2}). 
$$
Since $ \hat \vp$ is again a Schwartz function, we replace  $ \hat \vp$ by $ \vp$, and we will show that for a Schwartz function, $\vp$, we have 
\begin{equation} \label{eq:jpStep1} \sup_\eta \int_{\R^d} \frac{| \vp (\xi - \eta)|}{|P(h \xi)|} 
\frac{(h + |P(h \xi)|)^{2\lambda + 1}}{(h + |P(h \eta)|)^{2 \lambda}} \, d \xi 
\lesssim 1, \qquad  0 \leq h \leq 1.  
\end{equation}
Then an application of Lemma \ref{lm:Weighted Bound} will give the 
result of the Theorem. 

To establish \eqref{eq:jpStep1}, we begin by adding and subtracting $(h + |P(h \eta)|)^{2\lambda + 1}$ 
and then adding and subtracting $|P(h \xi)|$, we can show the following inequality. For $\eta \in \R^d$,
%%RMB 5/25/2021 Adjusted spacing in display below. 
\[\begin{split}
\int_{\R^d} &\frac{| \varphi(\xi - \eta)| }{|P(h \xi)|} \frac{(h + |P(h \xi)|)^{2\lambda + 1}}
{(h + |P(h \eta)|)^{2 \lambda}} \, d \xi \\
&\leq \int_{\R^d} 
    \frac{ |\varphi(\xi - \eta)| }{|P(h \xi)|}
\, h \, d \xi 
+ \int_{\R^d} \frac{ |\varphi(\xi - \eta) | }{|P(h \xi)|} \, ( ||P(h \eta)| - |P(h \xi)||) \, d \xi 
+ \int_{\R^d} | \varphi(\xi - \eta)| \, d \xi \\
&\quad + \int_{\R^d} \frac{ |\varphi(\xi - \eta) |}{|P(h \xi)|} \, 
\frac{| (h + |P(h \xi)|)^{2 \lambda + 1} - (h + |P(h \eta)|)^{2 \lambda + 1} | }
{ (h + |P(h \eta)|)^{2 \lambda}} \, d \xi \\
&= I + II + III + IV.
\end{split}
\]
Using  \eqref{RB2}  
%%\eqref{Brown2} 
of Lemma \ref{lm:Brown Notes} we see that $I$ is bounded by a 
constant which is independent of $\eta$ and $h$, Lemma \ref{lm:Case2}  gives the same 
estimate for $II$, and the estimate for $III$ is easy. This 
leaves us to bound $IV$. %\\

Now, we turn to an application of the mean value
theorem. We set $f(t) =  t^{2 \lambda + 1}$ for $ t \geq 0$. For $\eta, \xi \in \R^d$, let 
$a = \min ( \, h + |P(h \xi)|, \, h + |P(h \eta)| \, )$ and 
$b = || P(h \xi)| - |P(h \eta)| |$. Then, we get that 
$a + b  = \max ( \, h + |P(h \xi)|, \, h + |P(h \eta)| \, )$. The mean 
value theorem states that there exists $\theta$ between $a$ and $a+ b$ such that
\[ f(a+b) - f(a) = (2 \lambda + 1) \theta^{2 \lambda} b. \]
We will go through the proof when $\lambda \geq 0$ as the proof for 
$\lambda < 0$ is similar. Let $A_0 = B(\eta,1)$ and 
$A_k = B(\eta,2^k) \setminus B(\eta, 2^{k-1})$ for $k \geq 1$. 
Using \eqref{Case2.1} and then the mean value theorem, we obtain the upper bound
\begin{align*}
    IV &= \int_{\R^d} \frac{|\varphi(\xi - \eta)|}{|P(h \xi)|} \, 
    \frac{|(h + |P(h \xi)|)^{2 \lambda + 1} - (h + |P(h \eta)|)^{2 \lambda + 1} |}{(h + |P(h \eta)|)^{2 \lambda}} \, d \xi \\
    & \leq \sum_{k=0}^\infty \int_{A_k} \frac{|\varphi(\xi - \eta)| }{|P(h \xi)|} \frac{\theta^{2 \lambda} h|\xi - \eta|(h|\xi + \eta| + 1) }{(h + |P(h \eta)|)^{2 \lambda}} d \xi . 
\end{align*}
We use 
$|\theta| \leq h + |P(h \eta)| + |P(h \xi)|$ and that 
$\lambda \geq 0 $ to obtain an upper bound for $\theta^{2 \lambda }$. We now consider  
four cases.  Since $\vp $ is a Schwartz function 
 for  $N = 2 +d + 4 \lambda$, we have
%% xyz here we make a fixed choice of N
$|\vp(\xi)| \lesssim \langle \xi \rangle ^ { -N}$. 
%% After considering the 
%% four cases, it will be clear the value of $N$ we need. 

\textbf{Case 1:} $|h \eta| < 8$ and $2^k \leq 32/h$. Then, in $A_k$, we have $|\xi + \eta| \leq 48/h$, $|P(h \xi)| \lesssim |P(h \eta)| + h2^k$, and $|\theta|^{2 \lambda} \lesssim (h2^k + |P(h \eta)|)^{2\lambda}$. This gives us
\begin{align*}
    \int_{A_k} \frac{|\varphi(\xi - \eta)|}{|P(h \xi)|} \frac{\theta^{2 \lambda} h|\xi - \eta|(h|\xi + \eta| + 1) }{(h + |P(h \eta)|)^{2 \lambda}} d \xi & \lesssim \int_{A_k} \frac{|\varphi(\xi - \eta)|}{|P(h \xi)|} \frac{(h2^k + |P(h \eta)|)^{2 \lambda}h2^k}{(h + |P(h \eta)|)^{2 \lambda}} d \xi \\
    & \lesssim 2^k 2^{-Nk} \int_{A_k} \frac{h }{|P(h \xi)|} \Big( \frac{h2^k + |P(h \eta)|}{h + |P(h \eta)|} \Big)^{2 \lambda} d \xi \\
    & \lesssim 2^k 2^{-Nk} \int_{A_k} \frac{h }{|P(h \xi)|} \Big( 2^k \Big)^{2 \lambda} d \xi \\
    & \lesssim 2^{k(-N + 2\lambda + d)}.
\end{align*}

\textbf{Case 2:} $|h \eta| < 8$ and $2^k \geq 32 / h$. Then, in $A_k$, we have $2^{k-1} \geq 16/h$, $16/h \leq |\xi - \eta|$, $|\xi| \geq 8/h$, and $h|\eta + \xi| \leq h2^k + 8$. Using similar techniques to Case 1, we obtain
\begin{align*}
    \int_{A_k} \frac{|\varphi(\xi - \eta)|}{|P(h \xi)|} \frac{\theta^{2 \lambda} h|\xi - \eta|(h|\xi + \eta| + 1) }{(h + |P(h \eta)|)^{2 \lambda}} d \xi & \lesssim  2^{k(-N + d + 1 + 4 \lambda)}.
\end{align*}

\textbf{Case 3:} $|h \eta| \geq 8$ and $2^k \leq |\eta| / 8$. 
Then, $A_k \subset \{ \xi : 7|\eta|/8 \leq |\xi| \leq 9|\eta|/8 \}$.
Therefore, in $A_k$, we have $|\eta| \approx |\xi|$. Therefore we have 
$1 \leq h|\xi|/8$, $1 + h|\xi + \eta| \lesssim h |\xi|$, and 
$|P(h\eta)| \approx |h \eta|^2 \approx |h \xi|^2 \approx |P(h \xi)|$. 
We then obtain
\begin{align*}
     \int_{A_k} \frac{|\varphi(\xi - \eta)| }{|P(h \xi)|} \frac{\theta^{2 \lambda} h|\xi - \eta|(h|\xi + \eta| + 1) }{(h + |P(h \eta)|)^{2 \lambda}} d \xi \lesssim 2^{k(-N + 1 + d)}.
\end{align*}

\textbf{Case 4:} $|h \eta| \geq 8$ and $2^k \geq |\eta| / 8$. Then, in $A_k$, 
we have $|\xi| \lesssim 2^k$, $|P(h\xi)| \lesssim h2^k + h^2 2^{2k}$, 
$|P(h\eta)| \lesssim h2^k + h^2 2^{2k}$, and $|\eta + \xi| \lesssim 2^k$. 
We then obtain
\begin{align*}
     \int_{A_k} \frac{|\varphi(\xi - \eta)|}{|P(h \xi)|} 
     \frac{\theta^{2 \lambda} h|\xi - \eta|(h|\xi + \eta| + 1) }{(h + |P(h \eta)|)^{2 \lambda}} d \xi \lesssim 2^{k(-N + 1 + 4\lambda + d)}.
\end{align*}
 Our choice of $N$ gives 
\begin{align*}
    \sum_{k=0}^\infty \int_{A_k} \frac{|\varphi(\xi - \eta)|}{|P(h \xi)|} 
    \frac{\theta^{2 \lambda} h|\xi - \eta|(h|\xi + \eta| + 1) }
     {(h + |P(h \eta)|)^{2 \lambda}} d \xi 
     \lesssim \sum_{k=0}^\infty 2^{-k}.
\end{align*}
This gives us that
\[ \sup_\eta \int_{\R^d} \frac{|\varphi(\xi - \eta)|}{|P(h \xi)|} 
\frac{(h + |P(h \xi)|)^{2\lambda + 1}}{(h + |P(h \eta)|)^{2 \lambda}} \, d \xi < \infty \]
which completes our proof of \eqref{eq:jpStep1}  when $ \lambda \geq 0$. 
We leave it to the reader to 
adapt this argument to the case $ \lambda \leq 0$. 
%%It is a similar to prove for $\lambda < 0$, the big change is the bound for $\theta$.
\end{proof}

\begin{corollary} \label{cor:Iphi} The map 
$I_\vp : X^\lambda \to X^{\lambda + 1}$ is a bounded operator for all $\lambda \in \R$.
\end{corollary}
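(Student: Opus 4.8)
The plan is to factor $I_\vp$ through two applications of $J_\vp$, using the identity $I_\vp = J_\vp \frac{|P(hD)|}{P(hD)} J_\vp^*$ already recorded in the text just before Lemma \ref{lm:Weighted Bound}. The multiplier $\frac{|P(h\xi)|}{P(h\xi)}$ has modulus one wherever it is defined, so the operator $M$ of Fourier multiplication by $\frac{|P(h\xi)|}{P(h\xi)}$ is an isometry on every $X^\lambda$: indeed the $X^\lambda$ norm only involves $|\hat u(\xi)|$ and the weight $(h+|P(h\xi)|)^\lambda$, both of which are unchanged by multiplying $\hat u$ by a unimodular factor. Thus $M : X^\mu \to X^\mu$ is bounded (with norm $1$) for every $\mu$.

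The steps are then as follows. First, observe that Theorem \ref{th:Jphi} gives $J_\vp : X^\lambda \to X^{\lambda+1/2}$ bounded for every $\lambda \in \R$. Second, I would identify the adjoint $J_\vp^*$: since $(X^\lambda)^* = X^{-\lambda}$ as noted in the text, the adjoint of a bounded map $J_\vp : X^{-\lambda-1/2} \to X^{-\lambda}$ is a bounded map $X^{\lambda} \to X^{\lambda+1/2}$. Concretely, $J_\vp = \vp\,|P(hD)|^{-1/2}$ has the formal transpose given by first applying the Fourier multiplier $|P(h\xi)|^{-1/2}$ and then multiplying by $\bar\vp$ (or by $\vp$, taking $\vp$ real); this is again of the same type — a Schwartz-function multiplication composed with the multiplier $|P(hD)|^{-1/2}$ — so the proof of Theorem \ref{th:Jphi} applies verbatim to $J_\vp^*$ and yields $J_\vp^* : X^\lambda \to X^{\lambda+1/2}$ bounded. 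Third, I would compose: $X^\lambda \xrightarrow{J_\vp^*} X^{\lambda+1/2} \xrightarrow{M} X^{\lambda+1/2} \xrightarrow{J_\vp} X^{\lambda+1}$, each arrow bounded by the previous steps, so $I_\vp = J_\vp M J_\vp^* : X^\lambda \to X^{\lambda+1}$ is bounded. This is the whole argument.

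There is essentially no serious obstacle here; the content was already in Theorem \ref{th:Jphi}. The one point requiring a line of care is the claim that $J_\vp^*$ has the same mapping properties as $J_\vp$. The cleanest way to see this is the duality remark above — boundedness of $J_\vp$ between $X^{-\lambda-1/2}$ and $X^{-\lambda}$ is exactly boundedness of $J_\vp^*$ between $X^{\lambda}$ and $X^{\lambda+1/2}$ — which sidesteps having to re-examine the estimate \eqref{eq:jpStep1}. (Alternatively one simply notes the kernel-level symmetry: the bilinear estimate proved in Theorem \ref{th:Jphi} is symmetric enough in $\xi$ and $\eta$ that the same computation bounds $\sup_\xi \int \cdots d\eta$, which via Lemma \ref{lm:Weighted Bound} is precisely what controls $J_\vp^*$.) A minor bookkeeping remark worth including is that the implied constant is, as always in this paper, independent of $h$ for $0 < h \le h_0 < 1$, since each of Theorem \ref{th:Jphi} and the isometry bound for $M$ is $h$-uniform.
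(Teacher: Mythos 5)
Your argument is correct and is essentially identical to the paper's proof: both factor $I_\vp = J_\vp \frac{|P(hD)|}{P(hD)} J_\vp^*$, obtain the mapping property of $J_\vp^*$ by dualizing Theorem \ref{th:Jphi} via $(X^\lambda)^* = X^{-\lambda}$, note that the unimodular multiplier is an isometry on every $X^\mu$, and compose. No gaps.
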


\begin{proof}
Recall that $I_\phi = J_ \phi \frac{|P(hD)|}{P(hD)} J_ \phi ^ *$. From 
Theorem \ref{th:Jphi} we have  and $ J_\phi: X^ { \lambda + 1 /2} \to X^ {\lambda+1} $. 
By passing to the adjoint, the boundedness of 
$J_ \phi : X^ { -\lambda -1 /2} \rightarrow X^ { - \lambda}$ implies that 
 $J_\phi^ * : X^ \lambda \rightarrow X^ { \lambda + 1/2}$. 
Since  $\frac{|P(hD)|}{P(hD)}$ has a symbol of modulus one, 
it is an isometry on $X^\lambda$ for all $\lambda$. It follows 
that $I_\vp : X^\lambda \to X^{\lambda + 1}$.
\end{proof}

The next Lemma will imply  that $I_\vp$ is a right inverse to $P(hD)$ on $\Omega$. 

\begin{lemma} \label{lm:Inverse} %This was labeled th:Comm
%% xyz Added a few words so as not to start Lemma with a symbol. 
We have 
$P(hD)I_\vp u = u$ in $\Omega$.
\end{lemma}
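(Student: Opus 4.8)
The plan is to unwind the definition $I_\vp = \vp P(hD)^{-1}\vp$ and combine two observations: that the Fourier multiplier $P(hD)^{-1}$ (multiplication by $1/P(h\xi)$ on the Fourier side) genuinely inverts $P(hD)$ on all of $\R^d$, and that $P(hD)$ is a constant-coefficient differential operator, hence local, while $\vp \equiv 1$ on a neighborhood $U$ of $\bar\Omega$.

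First I would make precise the action of $P(hD)^{-1}$. The symbol $P(h\xi)$ vanishes on the codimension-two set $\{\xi : P(h\xi)=0\}$, but estimate \eqref{RB1} of Lemma \ref{lm:Brown Notes} (taking $s=0$) shows that $\xi \mapsto 1/|P(h\xi)|$ is locally integrable on $\R^d$, and since $|P(h\xi)| \approx |h\xi|^2$ for $|\xi|$ large, the product $\hat f(\xi)/P(h\xi)$ defines a tempered distribution whenever $\hat f$ is a Schwartz function (or merely a function of polynomial growth). Writing $(P(hD)^{-1}f)\,\hat{}\, = \hat f/P(h\xi)$, the pointwise identity $P(h\xi)\big(\hat f(\xi)/P(h\xi)\big) = \hat f(\xi)$ holds for a.e.\ $\xi$, hence as an identity of tempered distributions; applying the inverse Fourier transform gives $P(hD)P(hD)^{-1}f = f$ on $\R^d$. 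A one-line multiplier computation, $|P|^{-1/2}\cdot\frac{|P|}{P}\cdot|P|^{-1/2} = 1/P$, also shows that this $P(hD)^{-1}$ is consistent with the factorization $I_\vp = J_\vp \frac{|P(hD)|}{P(hD)} J_\vp^*$, so no ambiguity arises from the two ways of reading $I_\vp$.

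Now I would fix $u$ and set $g = P(hD)^{-1}(\vp u)$, so that $I_\vp u = \vp g$. Since $\vp \equiv 1$ on $U \supset \bar\Omega$, the distributions $\vp g$ and $g$ coincide on $U$; because $P(hD)$ is a differential operator it is local, so $P(hD)(\vp g) = P(hD)g$ in $\mathcal{D}'(U)$. By the previous step $P(hD)g = P(hD)P(hD)^{-1}(\vp u) = \vp u$ on $\R^d$, and $\vp u = u$ on $U$; hence $P(hD)I_\vp u = u$ in $U$, and in particular in $\Omega$. For general $u \in X^\lambda$ one may either repeat this computation verbatim --- each step only uses that $\hat u$ is a function of polynomial growth, which is built into the definition of $X^\lambda$ --- or invoke density of $\mathcal{S}$ in $X^\lambda$ together with boundedness of $I_\vp : X^\lambda \to X^{\lambda+1}$ (Corollary \ref{cor:Iphi}) and of $P(hD): X^{\lambda+1}\to X^\lambda$ (immediate from $|P(h\xi)| \le h+|P(h\xi)|$), and the fact that convergence in $X^\lambda$ forces convergence in $\mathcal{D}'(\Omega)$. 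The only delicate point is the first step, namely that the singular multiplier $1/P(h\xi)$ truly inverts $P(hD)$; this is exactly where the codimension-two vanishing of the symbol, quantified in Lemma \ref{lm:Brown Notes}, enters.
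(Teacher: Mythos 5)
Your proposal is correct and is essentially the paper's own argument: the paper phrases the locality step as the vanishing of the commutator $[hD,\vp]=0$ in $\Omega$ and then writes $P(hD)\vp P(hD)^{-1}\vp u=\vp P(hD)P(hD)^{-1}\vp u=\vp^2u=u$ there, which is exactly your combination of locality of $P(hD)$ with $\vp\equiv1$ near $\bar\Omega$ and the multiplier identity. The extra care you take in justifying that $1/P(h\xi)$ defines a genuine inverse (via local integrability from \eqref{RB1}) and in extending from Schwartz functions to $X^\lambda$ by density is detail the paper leaves implicit, and is welcome.
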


\begin{proof}
First, note that $D \vp = 0$ in $\Omega$ and $\vp^2 = \vp = 1$ in $\Omega$. 
Thus,  we see that
\begin{align*}
[hD, \vp]u = hD(\vp \, u) - \vp h Du = u hD \vp + \vp h Du - \vp h Du = u hD \vp = 0,
\qquad \mbox{in } \Omega
\end{align*}
Since  $[hD,\vp] = 0$ in $\Omega$, we have
\begin{align*}
P(hD)I_\vp u = P(hD) \vp P(hD)^{-1} \vp u = \vp P(hD) P(hD)^{-1} \vp u = \vp^2 u 
= u, \qquad \mbox{in } \Omega
\end{align*}
as we wanted.
\end{proof}

We now construct solutions to the   equation %%\eqref{eq:PDE2}. 
$L_ \zeta \psi = f$ where the operator $ L_\zeta$ is introduced in
\eqref{eq:condef}. 
We consider the integral equation  
\begin{equation} 
\label{eq:intfrm}
\psi + h^{ 2m} I_\vp^m ( Q\cdot ( \frac \zeta { ih} +D ) \phi + q\psi ) = 
 I _ \vp ^m f 
\end{equation} 
and will show that the map 
\begin{equation} \label{eq:contraction}
%% I + h^{2m}I_\vp^m ( Q \cdot ( \frac{\zeta}{ih} + D ) \cdot ) + h^{2m} I_\vp^m( q \cdot )
   \psi \rightarrow 
   %%\psi +
   h^{2m}I_\vp^m ( Q \cdot ( \frac{\zeta}{ih} + D ) \psi ) 
   + h^{2m} I_\vp^m( q \psi )
\end{equation}
is a contraction on $X^ { m/2}$ for suitable conditions on $q$, $Q$, 
and $h$.  Recalling Lemma \ref{lm:Inverse}, we can show that a 
solution of our integral equation is a distribution solution of the equation 
$ L_ \zeta \psi =f $. Since \eqref{SSE2} below implies that $\psi$ is in 
$W^ { m,2} ( \R^d)$, also obtain that $\psi$ is a weak solution. 
To prove we have a contraction map, we need estimates for the product 
$ q \psi$ when $q$ is a 
distribution of negative order and $ \psi $ lies in $X^ \lambda$ for 
some $\lambda >0$. 

%% Throughout this section, we will assume 
%% $q = \sum _{ |\alpha | \leq m } D^ \alpha q _\alpha$ and each component of 
%% $Q$, $Q _ j = \sum _ { |\alpha | \leq m-1 } D^ \alpha Q_ { j \alpha} $ 
%% where the functions $ q_ \alpha$ and $ Q_{ j \alpha} $ are H\"older
%% continuous.  As we shall see, this assumption is implied by the hypotheses of
%% Theorem \ref{th:main}. 

%%Text moved to scraps. 
We begin with a simple, but useful estimate that will allow us to 
relate smoothness in the
spaces $X^ \lambda$ with smoothness in $L^2$-Sobolev spaces. 
Suppose $  0< h \leq 1$ and $ 0 \leq s \leq 2\lambda $, then we have
\begin{equation} \label{eq:supremum}
  \sup _ { \xi \in \R^d} \frac { \langle \xi \rangle ^{s} }
  {( h + | P(h\xi)|)^\lambda } \lesssim h^ { -\lambda -s}. 
\end{equation}
To establish \eqref{eq:supremum}, we consider the case
$|h\xi| \geq 8$ and $ |h\xi|<8$ separately. If $ |h \xi | \geq 8$, 
we have $ |P(h\xi)| \approx h^2 |\xi|^2$ and thus
$\langle \xi \rangle ^s/( h + |P(h\xi)|) ^ \lambda 
\lesssim \langle \xi \rangle ^ { s -2 \lambda} h^{ -2\lambda}\lesssim h^{-s}$.
Since $0<h\leq 1$, this  is better than the estimate \eqref{eq:supremum}. In the 
case that $ |h\xi | < 8 $, we that $\langle \xi \rangle ^s \lesssim h^ { -s}$
and that $ (h + |p(h\xi) | ) ^ { -\lambda } \lesssim h^{- \lambda}$. Multiplying 
these two inequalities leads to \eqref{eq:supremum}. 
Our next step uses the estimate \eqref{eq:supremum} to establish estimates 
relating 
$L^2$-Sobolev spaces and  $X^ \lambda $ spaces. 

\begin{proposition}\label{prop:Df}
  If $ |\alpha | \leq 2 ( \lambda _2 -\lambda _1)$, then
  \begin{equation}\label{SSE1}
    \| D^ \alpha u \|_{X^{ \lambda_1 }} \lesssim h^ { - |\alpha | + \lambda _1 -
      \lambda _2 } \|u \|_ { X^ { \lambda _2 }}.
  \end{equation}
  and if $ 0 \leq s \leq 2 \lambda$, then
  \begin{align}\label{SSE2}
    \|u \| _ { W^ { s, 2 } ( \reals ^ d ) } & \lesssim h ^ {- s- \lambda } \| u \|_{ X^ \lambda } \\
\label{SSE3}
    \| u \|_{ X^{- \lambda }}  
    & \lesssim h ^ {- s- \lambda }        \|u \| _ { W^ { -s, 2 } ( \reals ^ d ) }.
  \end{align}
\end{proposition}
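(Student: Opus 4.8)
The plan is to reduce all three estimates to the pointwise bound \eqref{eq:supremum} together with Plancherel's theorem, exploiting that both the $X^\lambda$ norm and the $W^{s,2}(\R^d)$ norm are weighted $L^2$ norms on the Fourier side.

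For \eqref{SSE1} I would start from $(D^\alpha u)\,\hat{}\,(\xi) = \xi^\alpha \hat u(\xi)$, so that
\[ \norm{D^\alpha u}_{X^{\lambda_1}}^2 = \int_{\R^d} |\xi^\alpha|^2 |\hat u(\xi)|^2 (h + |P(h\xi)|)^{2\lambda_1}\, d\xi . \]
Comparing with $\norm{u}_{X^{\lambda_2}}^2$, it suffices to prove the pointwise inequality $|\xi^\alpha|\,(h + |P(h\xi)|)^{\lambda_1 - \lambda_2} \lesssim h^{-|\alpha| + \lambda_1 - \lambda_2}$. The hypothesis $|\alpha| \le 2(\lambda_2 - \lambda_1)$ forces $\mu := \lambda_2 - \lambda_1 \ge 0$ and $0 \le |\alpha| \le 2\mu$, so after bounding $|\xi^\alpha| \le \langle \xi \rangle^{|\alpha|}$ I can apply \eqref{eq:supremum} with $s = |\alpha|$ and $\lambda = \mu$ to get exactly the required bound; substituting back gives \eqref{SSE1}.

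For \eqref{SSE2} I would write $\norm{u}_{W^{s,2}(\R^d)}^2 = \int_{\R^d} \langle \xi \rangle^{2s} |\hat u(\xi)|^2\, d\xi$ and insert the factor $(h+|P(h\xi)|)^{2\lambda}$ to rewrite the integrand as $\big(\langle\xi\rangle^s (h+|P(h\xi)|)^{-\lambda}\big)^2 |\hat u(\xi)|^2 (h+|P(h\xi)|)^{2\lambda}$; since $0 \le s \le 2\lambda$, the bracketed factor is $\lesssim h^{-s-\lambda}$ by \eqref{eq:supremum}, and integration yields the claim. For \eqref{SSE3} I would argue by duality — $(X^\lambda)^* = X^{-\lambda}$ and $(W^{s,2}(\R^d))^* = W^{-s,2}(\R^d)$, so \eqref{SSE2} dualizes to \eqref{SSE3} with the same constant — or, equivalently, note directly that the pointwise bound from \eqref{eq:supremum} rearranges to $(h+|P(h\xi)|)^{-2\lambda} \lesssim h^{-2s-2\lambda}\langle\xi\rangle^{-2s}$, which is precisely what is needed after inserting it into the Fourier-side expression for $\norm{u}_{X^{-\lambda}}^2$.

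I do not expect a genuine obstacle here, since the substantive work is already packaged in \eqref{eq:supremum}. The only steps needing care are the exponent bookkeeping — verifying that the hypothesis on $|\alpha|$ (respectively on $s$) places the exponents inside the admissible range $0 \le \cdot \le 2(\cdot)$ demanded by \eqref{eq:supremum} — and the trivial degenerate cases, namely $\mu = 0$ (which forces $\alpha = 0$) and $|\alpha| = 0$, both of which are covered by \eqref{eq:supremum} with $s = 0$.
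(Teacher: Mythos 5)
Your proposal is correct and follows essentially the same route as the paper: reduce each estimate to the pointwise bound \eqref{eq:supremum} via Plancherel, and obtain \eqref{SSE3} from \eqref{SSE2} by duality of the embeddings. The exponent bookkeeping checks out exactly as you describe.
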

\begin{proof}
%%  There was a lower case p that I changed to upper case. 
%% xyz, the original mistkae was probably my fault. We should keep our eyes 
%% open in case there are more instances of this mistake. 
  We begin with the proof of \eqref{SSE1}.  We have
  $$
  \| D^ \alpha u \|^2 _ { X^ { \lambda _1} } \leq \sup_{\xi \in \R ^d} 
  |\xi|^{2 |\alpha |}
  (h+ |P(h\xi)|) ^ { 2 ( \lambda _1 - \lambda _2 ) } \| u \| _ { X^ { \lambda _2 } }^2
  \lesssim  h ^ { 2( - |\alpha | + \lambda _1 - \lambda _2 )}
    \|u \|_ { X^ { \lambda _2 } } 
  $$
  where the second inequality uses \eqref{eq:supremum}.

%% xyz Need to correct next sentence. Delete "map" 
The proof of \eqref{SSE2} is similar. Since the embedding of $W^{-s,2}( \reals ^d)$ into  
$X^ { - \lambda} $ 
 is the adjoint of the embedding 
  $ X^ \lambda \subset W^ { s,2} $, the estimate \eqref{SSE3} follows from \eqref{SSE2}. 
\end{proof}

\begin{proposition}
\label{prop:newcon}
%%  xyz added reference to \cal V
Suppose $ f \in L^ \infty ( \reals^d)$, $ \zeta$,  $\zeta_1$ and $\zeta _2 $ are in ${{\cal V}}$ (see 
\eqref{eq:zetadef}) and $0<h\leq 1$. 
Then when $ 0 \leq |\alpha | + |\beta| \leq 2 \lambda$  we have the trilinear estimate
\begin{equation} 
\label{eq:trilinear2}
|\langle D^ \alpha f D^ \beta u, v \rangle  | \lesssim 
h^ {-2 \lambda -|\alpha | -  |\beta|  } \| f \| _ \infty 
\| u \| _{ X^ \lambda_{ h\zeta_1}}
\| v \| _{X^ \lambda _ { h \zeta _2 }} .
\end{equation}

Let $ Tu=  (D^ \alpha f)  (D+ \frac \zeta { ih} ) ^ \beta u $ 
  with $ f  \in L^ \infty$ and assume 
  $ | \alpha | + |\beta| \leq 2 \lambda $, then
  \begin{equation} \label{eq:map} %%replaces prop:map
    \| Tu \| _{X_{h \zeta_2}^ { -\lambda }} 
    \lesssim \| f\| _ \infty  h ^ { -2 \lambda
      - |\alpha | - |\beta | } \| u \|_ { X_{h \zeta _1}^ \lambda } .
  \end{equation}
If in addition, we have $f \in C^ \theta ( \reals ^d )$ and 
$ |\alpha | \geq 1$, then we have
    \begin{equation}\label{eq:holder} %%replaces prop:holder
    \| Tu \| _{X^ { -\lambda } _{h \zeta_2} }\lesssim \| f\| _ {C^ \theta ( \reals^d)}  
    h ^ { -2 \lambda
      - |\alpha | - |\beta | + \theta } \| u \|_ { X^ \lambda _{h \zeta_1}}.
  \end{equation}
\end{proposition}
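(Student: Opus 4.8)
The plan is to prove the three estimates in the order stated, bootstrapping each from its predecessor and from Proposition~\ref{prop:Df}. For the trilinear estimate \eqref{eq:trilinear2}, I would read the pairing, following the convention of Section~\ref{sec:notation} (cf.\ \eqref{eq:trilinear}), via integration by parts as $\langle D^\alpha f\, D^\beta u,v\rangle=(-1)^{|\alpha|}\langle f,D^\alpha(v\, D^\beta u)\rangle$, which is legitimate once $D^\alpha(v\, D^\beta u)\in L^1(\R^d)$; since Schwartz functions are dense in $X^\lambda$ we may take $u,v\in{\cal S}(\R^d)$ while proving the bound. Expanding by the Leibniz rule gives $D^\alpha(v\, D^\beta u)=\sum_{\gamma\le\alpha}\binom{\alpha}{\gamma}D^\gamma v\, D^{\alpha-\gamma+\beta}u$, so Cauchy--Schwarz yields $|\langle f,D^\alpha(v\, D^\beta u)\rangle|\le\|f\|_\infty\sum_{\gamma\le\alpha}\binom{\alpha}{\gamma}\|D^\gamma v\|_{L^2}\|D^{\alpha-\gamma+\beta}u\|_{L^2}$. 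Since $|\gamma|\le|\alpha|\le2\lambda$ and $|\alpha-\gamma+\beta|\le|\alpha|+|\beta|\le2\lambda$, the estimate \eqref{SSE2} applies to both factors (and to $X^\lambda_{h\zeta_1}$ and $X^\lambda_{h\zeta_2}$ alike, because \eqref{eq:supremum} holds for every $\zeta\in{\cal V}$), producing $\|D^\gamma v\|_{L^2}\lesssim h^{-|\gamma|-\lambda}\|v\|_{X^\lambda_{h\zeta_2}}$ and $\|D^{\alpha-\gamma+\beta}u\|_{L^2}\lesssim h^{-(|\alpha|-|\gamma|+|\beta|)-\lambda}\|u\|_{X^\lambda_{h\zeta_1}}$; the powers of $h$ combine to $h^{-2\lambda-|\alpha|-|\beta|}$ independently of $\gamma$, and the finite sum gives \eqref{eq:trilinear2}.

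For the mapping estimate \eqref{eq:map}, I would use the duality $(X^\lambda)^*=X^{-\lambda}$ to write $\|Tu\|_{X^{-\lambda}_{h\zeta_2}}=\sup\{|\langle Tu,v\rangle|:\|v\|_{X^\lambda_{h\zeta_2}}\le1\}$. Expanding $(D+\tfrac\zeta{ih})^\beta u=\sum_{\gamma\le\beta}\binom{\beta}{\gamma}(\tfrac\zeta{ih})^{\beta-\gamma}D^\gamma u$ and using $|\zeta|\lesssim1$, so that $|(\tfrac\zeta{ih})^{\beta-\gamma}|\lesssim h^{-|\beta-\gamma|}$, reduces matters to the terms $\langle(D^\alpha f)(D^\gamma u),v\rangle$ with $\gamma\le\beta$. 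Since $|\alpha|+|\gamma|\le|\alpha|+|\beta|\le2\lambda$, the already-proved \eqref{eq:trilinear2} bounds each such term by $h^{-2\lambda-|\alpha|-|\gamma|}\|f\|_\infty\|u\|_{X^\lambda}\|v\|_{X^\lambda}$, and the accompanying factor $h^{-|\beta-\gamma|}$ restores the exponent to $-2\lambda-|\alpha|-|\beta|$; summing over the finitely many $\gamma$ and taking the supremum over $v$ gives \eqref{eq:map}.

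For the H\"older improvement \eqref{eq:holder}, fix a mollifier $\rho\in C_0^\infty(\R^d)$ with $\int\rho=1$, set $\rho_\epsilon(x)=\epsilon^{-d}\rho(x/\epsilon)$ and $f_\epsilon=f*\rho_\epsilon$, and split $D^\alpha f=D^\alpha f_\epsilon+D^\alpha(f-f_\epsilon)$. Two elementary estimates drive the argument: $\|f-f_\epsilon\|_\infty\lesssim\epsilon^\theta\|f\|_{C^\theta}$, and---crucially using $|\alpha|\ge1$, so that $\int D^\alpha\rho_\epsilon=0$---$\|D^\alpha f_\epsilon\|_\infty=\|f*D^\alpha\rho_\epsilon\|_\infty\lesssim\epsilon^{\theta-|\alpha|}\|f\|_{C^\theta}$, the latter obtained by subtracting the value of $f$ at the base point inside the convolution and rescaling $D^\alpha\rho_\epsilon$. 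Applying \eqref{eq:map} to $(D^\alpha f_\epsilon)(D+\tfrac\zeta{ih})^\beta u$, with $D^\alpha f_\epsilon\in L^\infty$ playing the role of $f$ and no derivative falling on it (valid since $|\beta|\le2\lambda$), bounds it by $\epsilon^{\theta-|\alpha|}h^{-2\lambda-|\beta|}\|f\|_{C^\theta}\|u\|_{X^\lambda}$; applying \eqref{eq:map} to $(D^\alpha(f-f_\epsilon))(D+\tfrac\zeta{ih})^\beta u$, with $f-f_\epsilon\in L^\infty$ and the full $D^\alpha$, bounds it by $\epsilon^\theta h^{-2\lambda-|\alpha|-|\beta|}\|f\|_{C^\theta}\|u\|_{X^\lambda}$. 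Choosing $\epsilon=h$ makes the two contributions equal, each becoming $h^{-2\lambda-|\alpha|-|\beta|+\theta}\|f\|_{C^\theta}\|u\|_{X^\lambda}$, which is \eqref{eq:holder}.

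The substantive content is packaged in Proposition~\ref{prop:Df}; once \eqref{SSE2} and duality are in hand, the three estimates amount to bookkeeping. The points that need care are (i) verifying at each Leibniz expansion that every derivative multi-index landing on $u$ or $v$ has order at most $2\lambda$, so that \eqref{SSE2} may legitimately be invoked, and (ii) the observation that H\"older regularity enters \eqref{eq:holder} only through the cancellation $\int D^\alpha\rho_\epsilon=0$, which is exactly why the hypothesis $|\alpha|\ge1$ is needed and why the gain is precisely a factor of $h^\theta$ after the mollification scale is optimized at $\epsilon=h$.
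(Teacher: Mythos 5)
Your proposal is correct and follows essentially the same route as the paper: Leibniz expansion plus Cauchy--Schwarz and Proposition \ref{prop:Df} for \eqref{eq:trilinear2}, binomial expansion of $(D+\tfrac{\zeta}{ih})^\beta$ with duality to reduce \eqref{eq:map} to \eqref{eq:trilinear2}, and the mollification split $f=f_h+f^h$ with the cancellation from $|\alpha|\geq 1$ for \eqref{eq:holder}. The only cosmetic difference is that you optimize the mollification scale $\epsilon$ at the end rather than fixing it to be $h$ from the outset, and you make explicit the moment condition $\int D^\alpha\rho_\epsilon=0$ that the paper leaves implicit.
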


\begin{proof} We first consider the case where $ u $ and $v$  
are in the Schwartz class. 
Using the definition of  $D^ \alpha f D^ \beta u $ 
and the product rule we obtain
$$ \langle D^ \alpha f D^ \beta u , v \rangle =(-1)^ { |\alpha |}  \int _ { \reals ^ d} f
\sum _{ \alpha _1 + \alpha _2 = \alpha} \frac {\alpha !} { \alpha _1 ! \alpha _2 !}
D^ { \beta + \alpha _1} u D^ { \alpha _2 } v \, dx .
$$
Using the Cauchy-Schwartz inequality and the estimate \eqref{SSE1} with 
$ \lambda _1 = 0$ and $ \lambda _2=\lambda$ quickly gives \eqref{eq:trilinear2}.  The result 
for general $u$ and $v$ follows because the Schwartz class is dense in 
the space $X^ \lambda$. 

To establish \eqref{eq:map}, we write
\begin{equation} \label{qmap1}
\langle Tu, v \rangle =(-1)^{|\alpha|} 
\int_ { \reals ^d} \sum _ { \beta _1 + \beta _2 =\beta}
 \frac { \beta !}{ \beta _1! \beta _2 !} f 
 D^\alpha (D^{\beta _1 } u ( \zeta /(ih))^{ \beta_2} v ) \, dx.
\end{equation}
The estimate \eqref{eq:map} follows from \eqref{eq:trilinear2}.

In the case when $ f$ is H\"older continuous, we write $ f = f _ h + f^h$ 
where $ f_ h$ is a mollification of $f$ and the decomposition satisfies
$ \|D^ \alpha f _ h \|_ \infty \lesssim h ^ { \theta - |\alpha|}$ for
$|\alpha|\geq 1$  and $ \| f^ h \| _ \infty \lesssim h ^ \theta$. 
We use this decomposition 
of $f$ in  \eqref{qmap1}
and consider the two terms. For the first term, we use that
$ |\alpha |\geq 1$, integrate by parts and use \eqref{SSE1}
to obtain 
  $$
\left |  \int_ { \reals ^d} \sum _ { \beta _1 + \beta _2 =\beta}
  \frac { \beta !}{ \beta _1! \beta _2 !} f_ h D^ \alpha 
  ( D^ { \beta _1 } u ( \zeta /(ih))^{ \beta_2} v ) \, dx \right|
  \lesssim \|f\|_ { C^ \theta ( \reals ^d)} 
  h^ { -2 \lambda - |\alpha | - |\beta | + \theta  } \| u \|_ { X^ \lambda }
  \| v \| _ { X^ \lambda}. 
  $$
  %%xyz next line rewritten in response to Comment X4. 
Since $ \|f^h\|_\infty \lesssim \| f\|_ { C^ \theta( \R ^d)}h^ \theta$, we may use   \eqref{eq:map} and obtain the same estimate for the term involving $f^h$, 
$$
\left |  \int_ { \reals ^d} \sum _ { \beta _1 + \beta _2 =\beta}
  \frac { \beta !}{ \beta _1! \beta _2 !} f^h D^ \alpha 
  ( D^ { \beta _1 } u ( \zeta /(ih))^{ \beta_2} v ) \, dx \right|
  \lesssim \|f\|_ { C^ \theta ( \reals ^d)} 
  h^ {  -2 \lambda - |\alpha | - |\beta | + \theta } \| u \|_ { X^ \lambda } \| v \| _ { X^ \lambda}. 
$$
\end{proof}

%%Text moved to scraps.tex 

\begin{comment}
Next, we will want to bound $q_\alpha$ and $Q_\alpha$ in the
$\widetilde{W}^{r,p}(\Omega)$ norm and not just the H\"older norm. 
We have the following proposition for that.
\begin{proposition} \label{prop:Df}
For $f_{\alpha,\beta} \in \widetilde{W}^{r,p}(\Omega)$, we have
\[ \norm{D^\alpha f_{\alpha,\beta}}_{X^{- \lambda}} \leq h^{-|\alpha| + r - \lambda} \norm{f_{\alpha,\beta}}_{\widetilde{W}^{r,p}(\Omega)} \]
for $|\alpha| - r \leq 2 \lambda$.
\end{proposition}

\begin{proof}
Using \eqref{eq:supremum} we have that
\begin{align*}
\norm{D^\alpha f}_{X^{- \lambda}}^2
&= \int \Big( \frac{|\xi|^\alpha}{(h + |P(h \xi)|)^\lambda}
|\hat f_{\alpha,\beta}(\xi)| \Big)^2 \, d \xi \\
& \leq \sup_\xi \Big( \frac{|\xi|^{|\alpha| - r}}{(h + |P(h \xi)|)^\lambda} \Big)^2 
\int (1 + |\xi|^2)^{r/2} |\hat f_{\alpha,\beta}(\xi)|^2 \, d \xi \\
& \lesssim h^{-2(|\alpha| + \lambda)}
\norm{f_{\alpha,\beta}}_{\widetilde{W}^{r,p}(\Omega)}^2
\end{align*}
\end{proof}
\end{comment}

Our next proposition shows that multiplying by a smooth function preserves the 
class $ X^ \lambda$.

\begin{proposition}
    \label{prop:aqbound}
Let $ v \in C^ \infty ( \reals ^d)$ and assume that $v$ and  all derivatives of 
$v$ are bounded on $ \reals ^d$. Then for $ \lambda \in \reals $, we have 
    \begin{equation}\label{Mult}
      \| uv \|_{ X^ \lambda } \lesssim \| u \|_{X^ \lambda}
      .
    \end{equation}
    The implied constant in \eqref{Mult} depends on $ \lambda$ and 
    $   \| D^ \alpha v \| _{ \infty} $ for $ |\alpha | \leq c(|\lambda|)$.
  \end{proposition}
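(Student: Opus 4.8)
The plan is to observe that the weight $m(\xi):=h+|P_\zeta(h\xi)|$ defining the $X^\lambda$ norm is a \emph{temperate weight}, and then to combine this with a Littlewood--Paley decomposition of $v$. Here $m(D)^\lambda$ denotes the Fourier multiplier with symbol $m(\xi)^\lambda$, so that $\norm{u}_{X^\lambda}=\norm{m(D)^\lambda u}_{L^2}$. Two preliminary reductions: since $\langle uv,g\rangle=\langle u,vg\rangle$ and $(X^\lambda)^*=X^{-\lambda}$, multiplication by $v$ is its own transpose for the pairing of $X^\lambda$ with $X^{-\lambda}$, so \eqref{Mult} for $X^\lambda$ is equivalent to \eqref{Mult} for $X^{-\lambda}$ and we may assume $\lambda\ge0$; and since ${\cal S}$ is dense in $X^\lambda$ we may take $u\in{\cal S}$, so $uv\in{\cal S}$ and the Fourier computations below are legitimate.

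\emph{The key estimate} is that
\[
h+|P_\zeta(h\xi)|\ \lesssim\ \bigl(h+|P_\zeta(h\eta)|\bigr)\,\langle\xi-\eta\rangle^{2},\qquad 0<h\le1,\ \xi,\eta\in\R^d,\ \zeta\in{\cal V},
\]
with an absolute constant. Indeed $|P(h\xi)|\le|P(h\eta)|+\bigl||P(h\xi)|-|P(h\eta)|\bigr|$, the last term being $\lesssim h|\xi-\eta|(h|\xi+\eta|+1)$ by \eqref{Case2.1}; one writes $h|\xi+\eta|\le h|\xi-\eta|+2h|\eta|$, uses the elementary bound $(h|\eta|)^2\lesssim1+\bigl(h+|P(h\eta)|\bigr)$ (split according to whether $|h\eta|\le8$, using $|P(h\eta)|\gtrsim|h\eta|^2$ when $|h\eta|$ is large), and concludes by manipulations that invoke only $h\le1$ and $h\le h+|P(h\eta)|$. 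Raising to the power $\lambda\ge0$ gives $m(\xi)^\lambda\lesssim m(\eta)^\lambda\langle\xi-\eta\rangle^{2\lambda}$. For $v\in{\cal S}$ (or $v=e^{i\kappa\cdot x}$) this already yields \eqref{Mult}, since on the Fourier side multiplication by $v$ is convolution with $\widehat v\in{\cal S}$ (respectively translation by $\kappa$) and $\int|\widehat v(\theta)|\langle\theta\rangle^{2\lambda}\,d\theta<\infty$.

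For a general $v$ as in the statement, decompose $v=\sum_{j\ge0}v_j$ by a Littlewood--Paley partition of unity so that $\widehat{v_j}$ is supported in a ball of radius $\lesssim2^j$; since $v$ and all its derivatives are bounded, $\norm{v_j}_\infty\lesssim_N2^{-jN}\max_{|\alpha|\le N}\norm{D^\alpha v}_\infty$ for every $N$, with $\norm{v_0}_\infty\lesssim\norm{v}_\infty$. It then suffices to prove the per-block bound
\[
\norm{u v_j}_{X^\lambda}\ \lesssim\ 2^{\,j(2\lambda+d)}\,\norm{v_j}_\infty\,\norm{u}_{X^\lambda}
\]
and to sum over $j$ after choosing $N=N(\lambda)>2\lambda+d$; this fixes the value of $c(|\lambda|)$.

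\emph{The per-block bound is the step I expect to be the main obstacle.} Writing $T_j:=m(D)^\lambda M_{v_j}\,m(D)^{-\lambda}$ (with $M_{v_j}$ multiplication by $v_j$), one has $\norm{uv_j}_{X^\lambda}\le\norm{T_j}_{L^2\to L^2}\norm{u}_{X^\lambda}$; on the Fourier side $M_{v_j}$ is convolution with $c\widehat{v_j}$, supported on $\{|\xi-\eta|\lesssim2^j\}$, so $T_j$ has Fourier-side kernel $c\,\widehat{v_j}(\xi-\eta)\,m(\xi)^\lambda m(\eta)^{-\lambda}$, and by the key estimate $m(\xi)^\lambda m(\eta)^{-\lambda}\lesssim2^{2j\lambda}$ on that strip. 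One cannot apply Lemma \ref{lm:Weighted Bound} directly because $\widehat{v_j}$ is merely a compactly supported distribution, not an $L^1$ function. I would get around this by replacing $m$ with the comparable but \emph{smooth} weight $\widetilde m(\xi):=\bigl(h^2+|P_\zeta(h\xi)|^2\bigr)^{1/2}$ — note $|P_\zeta(h\xi)|^2$ is a polynomial in $\xi$ and $\tfrac1{\sqrt2}m(\xi)\le\widetilde m(\xi)\le m(\xi)$ — and verifying the $h$-uniform bounds $|\partial_\xi^\gamma\widetilde m(\xi)^{\pm\lambda}|\lesssim_{\gamma,\lambda}\widetilde m(\xi)^{\pm\lambda}$, which reduce via Fa\`a di Bruno to $|\partial_\xi^\gamma\bigl(h^2+|P_\zeta(h\xi)|^2\bigr)|\lesssim_\gamma h^2+|P_\zeta(h\xi)|^2$ (a direct computation from $|P_\zeta(h\xi)|^2=(\re P_\zeta(h\xi))^2+(\im P_\zeta(h\xi))^2$, the off-diagonal Leibniz terms being $\lesssim h^2$). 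For the smooth weight, $\widetilde m(\xi)^\lambda\widetilde m(\eta)^{-\lambda}$ restricted to $\{|\xi-\eta|\lesssim2^j\}$ expands in an absolutely convergent Fourier series in the variable $\xi-\eta$ over a cube of side $\sim2^j$, say $\sum_\nu a_\nu(\xi)\,e_\nu(\xi-\eta)$ with unimodular characters $e_\nu$ and $\sum_\nu\norm{a_\nu}_\infty\lesssim2^{j(2\lambda+d)}$; multiplying the Fourier-side kernel of $M_{v_j}$ by this sum of tensor products realizes $\widetilde m(D)^\lambda M_{v_j}\widetilde m(D)^{-\lambda}$ as a $\nu$-sum of modulations composed with $M_{v_j}$ composed with modulations, whence $\norm{\widetilde m(D)^\lambda M_{v_j}\widetilde m(D)^{-\lambda}}_{L^2\to L^2}\lesssim\bigl(\sum_\nu\norm{a_\nu}_\infty\bigr)\norm{M_{v_j}}_{L^2\to L^2}\lesssim2^{j(2\lambda+d)}\norm{v_j}_\infty$, using that $\norm{M_{v_j}}_{L^2\to L^2}\le\norm{v_j}_\infty$ with no loss. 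Since $\widetilde m\approx m$ uniformly in $h$, this is the per-block bound, and summing over $j$ finishes the proof. (Equivalently, one may note that $\widetilde m(D)^\lambda M_v\widetilde m(D)^{-\lambda}$ is, by the Weyl calculus for the metric $|dx|^2+|d\xi|^2$ with weights $\widetilde m^{\pm\lambda}$, a pseudodifferential operator with symbol in $S(1,|dx|^2+|d\xi|^2)$ whose seminorms are bounded uniformly in $h$ and controlled by finitely many $\norm{D^\alpha v}_\infty$, hence $L^2$-bounded by Calder\'on--Vaillancourt.)
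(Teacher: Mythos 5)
Your argument is correct, but it takes a genuinely different route from the paper's. The paper proves \eqref{Mult} by induction on integer $\lambda\ge 0$: writing $\|uv\|_{X^{\lambda+1}}\lesssim h\|uv\|_{X^{\lambda}}+\|P(hD)(uv)\|_{X^{\lambda}}$, expanding $P(hD)(uv)$ by the Leibniz rule, invoking the induction hypothesis together with \eqref{SSE1}, and then passing to fractional $\lambda\ge0$ by complex interpolation and to $\lambda<0$ by duality. You share only the duality step; in place of induction and interpolation you use the moderate-weight inequality $h+|P(h\xi)|\lesssim(h+|P(h\eta)|)\langle\xi-\eta\rangle^{2}$ (which is correct, and is really the same mechanism that underlies \eqref{SSE1} and Lemma \ref{lm:Weighted Bound}) combined with a Littlewood--Paley decomposition of $v$ and a per-block kernel bound. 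What each buys: the paper's proof is far shorter given the tools already on hand, but it quietly relies on identifying the complex interpolation spaces of the $X^{\lambda}$ scale (true for weighted $L^{2}$ spaces, but asserted rather than proved); your proof treats all real $\lambda\ge0$ at once, needs no interpolation, and makes the constant $c(|\lambda|)$ explicit (any integer exceeding $2\lambda+d$), at the price of the heavier per-block machinery --- the Fourier-series (or Calder\'on--Vaillancourt) trick is needed precisely because $\hat v_{j}$ is only a compactly supported distribution, and you correctly identify and resolve that obstacle. Two small points to tighten if you write this up: the bound $(h|\eta|)^{2}\lesssim 1+(h+|P(h\eta)|)$ cannot be fed directly into the target inequality since $1\not\lesssim h+|P(h\eta)|$; you should instead use $h|\eta|\lesssim 1+|P(h\eta)|^{1/2}$ and absorb the cross term $h|\xi-\eta|\,|P(h\eta)|^{1/2}\le\tfrac12 h|\xi-\eta|^{2}+\tfrac12|P(h\eta)|$. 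And before expanding $\widetilde m(\xi)^{\lambda}\widetilde m(\eta)^{-\lambda}$ in a Fourier series on the cube, you should insert a smooth cutoff in $\xi-\eta$ equal to $1$ on the support of $\hat v_{j}$, so that the periodized function is smooth and the coefficients actually decay; this costs nothing in the estimates.
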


  \begin{proof} We begin by giving a proof by induction for  \eqref{Mult} when 
  $ \lambda \geq 0$ is an integer.  Since $X^0= L^2$, the base case 
  $ \lambda =0$ is elementary. 
  Next, we assume \eqref{Mult} holds for  $ \lambda \geq 0$ and establish 
  the inequality for $ \lambda +1$. Using Plancherel's theorem we have 
  $\| u v \| _ { X^ { \lambda +1}} \leq h  \| uv \|_{ X^ { \lambda }}
  + \|P(hD)(uv) \|_ { X^ \lambda}$. Then using the product rule, we have
    \begin{align*}
      \|uv\|_{X^ { \lambda+1}}&\lesssim h \| uv\|_{ X^ \lambda}
      + \| v P(hD) u \| _ { X^ \lambda }
      +h \| u Dv \| _ { X^ \lambda } + h^2 \|u \Delta v \| _ { X^\lambda}
      + h^2 \| Du \cdot Dv \|_{ X^ \lambda } \\
      & \lesssim ( h + h^2 ) \|u \| _ { X^ \lambda} + \| P(hD) u\| _ { X^ \lambda } 
      + h^2 \| Du \| _ { X^ \lambda} \\
        & \lesssim \| u \| _{X^ { \lambda +1}}.
    \end{align*}
The second inequality uses our induction hypothesis, the third inequality 
follows from \eqref{SSE1} and uses that $ 0 < h \leq 1$.

    Next, we may use complex interpolation to establish that the map
$ u \rightarrow v u $ is bounded on $ X^ \lambda$ for all $ \lambda \geq 0$. 
Finally, the adjoint of this map will also be bounded on $X^ { - \lambda}$ 
which gives \eqref{Mult} for $ \lambda <0$.
  \end{proof}
 
We now give conditions which imply that \eqref{eq:contraction} is a
contraction map. Proposition \ref{prop:Decomposition} implies that the conditions
on $q$ and $Q$ in our next theorem follow from the 
hypotheses in Theorem \ref{th:main}.  

\begin{theorem} 
\label{th:qma}
Suppose that 
\begin{equation}
\label{eq:qma}
Q_j = \sum _ { |\beta|\leq m-1} D^ \beta Q_{j\beta  },
\qquad q  = \sum _ { |\beta | \leq m}
D^ \beta q _ \beta
\end{equation}
with $ Q_ { j\beta  }, q_ \beta $ in $C^ \theta ( \R^ d)$  for some $ \theta \in (0,1]$ 
and 
each function  is supported in $ \bar \Omega$. Then there is a value $h_0$ depending on
$\theta$ and 
the H\"older norms of the functions $ Q_{j\beta}, q_\beta$ so that if 
$ 0 < h \leq h_0$, we may find a function $ \psi \in X^ { m/2}$ which satisfies
$ L _ \zeta \psi =f $ in $ \Omega$. The solution will satisfy the estimate
$$
\| \psi \| _ { X^ { m/2}} \lesssim   \|  f \| _ { X^ {- m/2}}.
$$
\end{theorem}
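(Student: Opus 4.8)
The plan is to solve the integral equation \eqref{eq:intfrm} by a Neumann series, which amounts to showing that the linear map in \eqref{eq:contraction} has operator norm less than $1$ on $X^{m/2}$ once $h$ is small, and then to check that the resulting $\psi$ is a weak solution of $L_\zeta\psi=f$ in $\Omega$.

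I would first record two facts. Iterating Corollary \ref{cor:Iphi} shows that $I_\vp^m:X^{-m/2}\to X^{m/2}$ is bounded, with a bound independent of $h\in(0,1]$. Second, for $\psi\in X^{m/2}$ I would estimate $Q_j\cdot(\zeta/(ih)+D)\psi+q\psi$ in $X^{-m/2}$ by substituting the decompositions \eqref{eq:qma} and applying Proposition \ref{prop:newcon} termwise. After expanding, each scalar summand has the form $(D^\beta g)(D+\zeta/(ih))^\gamma\psi$ with $g\in C^\theta(\reals^d)$ supported in $\bar\Omega$, where $|\gamma|=0$ for the summands coming from $q$ and $|\gamma|=1$ for those coming from $Q_j$; the hypotheses $|\beta|\le m$ in the first case and $|\beta|\le m-1$ in the second are exactly what guarantees $|\beta|+|\gamma|\le m=2(m/2)$, so Proposition \ref{prop:newcon} applies with $\lambda=m/2$ and $|\alpha|=|\beta|$. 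For $|\beta|\ge1$ the estimate \eqref{eq:holder} (whose hypothesis $|\alpha|\ge1$ is then met) gives a factor $h^{-m-|\beta|-|\gamma|+\theta}$, while for the finitely many $\beta=0$ summands one instead uses \eqref{eq:map}, with a factor $h^{-m-|\gamma|}$. Multiplying through by $h^{2m}$ and by the $h$-uniform bound on $I_\vp^m$, every summand contributes a strictly positive power of $h$: the H\"older terms yield at worst $h^\theta$, attained at the top orders $|\beta|=m$ (from $q$) and $|\beta|=m-1$ (from $Q_j$), while the $\beta=0$ terms yield $h^m$ and $h^{m-1}$, which are positive since $m\ge2$. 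Hence the map in \eqref{eq:contraction}, call it $K$, satisfies $\|K\|_{X^{m/2}\to X^{m/2}}\le C(h^\theta+h^{m-1})\le C'h^\theta$, where $C'$ depends only on $m$, $d$, $\theta$ and the norms $\|Q_{j\beta}\|_{C^\theta}$, $\|q_\beta\|_{C^\theta}$.

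With this in hand, I would take $h_0$ so small that $C'h_0^\theta\le1/2$; then for $0<h\le h_0$ the operator $I+K$ is invertible on $X^{m/2}$ by Neumann series, and $\psi:=(I+K)^{-1}I_\vp^m f$ solves \eqref{eq:intfrm}. Since $\|K\|\le1/2$, one gets $\|\psi\|_{X^{m/2}}\le2\|I_\vp^m f\|_{X^{m/2}}\lesssim\|f\|_{X^{-m/2}}$. To see that $\psi$ solves the equation, write $\psi=I_\vp^m g$ with $g:=f-h^{2m}\bigl(Q\cdot(\zeta/(ih)+D)\psi+q\psi\bigr)\in X^{-m/2}$, apply $P(hD)^m$, and iterate Lemma \ref{lm:Inverse}; the iteration is legitimate because $P(hD)=-h^2\Delta$ is a local differential operator, so the restriction to $\Omega$ of $P(hD)^{m-1}w$ depends only on $w|_\Omega$, and an easy induction then gives $P(hD)^m I_\vp^m g=g$ in $\Omega$. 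Thus $P(hD)^m\psi=g$ in $\Omega$, i.e.\ $L_\zeta\psi=f$ in $\Omega$ by \eqref{eq:condef}. Finally $\psi\in X^{m/2}\subset W^{m,2}(\reals^d)$ by \eqref{SSE2}, so $\psi$ is a weak solution in the sense defined in Section \ref{sec:notation}.

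The one genuinely delicate point is the power counting in the second paragraph, and in particular the observation that at the top orders $|\beta|=m$ for $q$ and $|\beta|=m-1$ for $Q_j$ the plain $L^\infty$ bound \eqref{eq:map} would only produce $h^0$, so the H\"older gain $h^\theta$ furnished by \eqref{eq:holder} is indispensable \emph{precisely} there, while for all lower-order terms and the $\beta=0$ terms the surplus power of $h$ is comfortable. Once the contraction estimate is in place, the remainder is a routine Neumann series argument combined with the mapping properties already established.
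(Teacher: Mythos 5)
Your proposal follows essentially the same route as the paper: solve the integral equation \eqref{eq:intfrm} on $X^{m/2}$ by making the linear map \eqref{eq:contraction} a contraction (the Neumann series is the same thing), using Proposition \ref{prop:newcon} on the decomposition \eqref{eq:qma} for the smallness in $h$, Corollary \ref{cor:Iphi} iterated $m$ times for the final bound, and Lemma \ref{lm:Inverse} together with \eqref{SSE2} to upgrade to a weak solution. The power counting you spell out (the H\"older gain $h^\theta$ from \eqref{eq:holder} being needed exactly at the top orders $|\beta|=m$ and $|\beta|=m-1$) is correct and is precisely what the paper's terser proof implicitly relies on.
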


\begin{proof} 
If $Q$ and $q$ are as in \eqref{eq:qma}, then according to Proposition 
\ref{prop:newcon} the map \eqref{eq:contraction} will be a contraction on 
$X^ { m/2}$ at least for $h$ sufficiently small.  Thus, we may 
solve the integral equation 
\eqref{eq:intfrm} for $ \psi$ and the solution will satisfy the estimate
$
\|\psi \| _ { X^ { m/2}} \lesssim  \|I_\phi^m  f \| _ { X^ { m/2}} 
$. Then $m$ applications of  Corollary \ref{cor:Iphi} 
imply the estimate of the 
Theorem. 
To show that $\psi $ is a weak solution of the differential equation $ L_ \zeta \psi =f$
we need to show that it is a solution in the sense of distributions, which
follows from  Lemma \ref{lm:Inverse} and that $\psi$ is in $W^ { m,2 } ( \Omega)$. We use 
\eqref{SSE2} 
to show that $ X^ { m/2} \subset W^ { m,2} ( \R^d)$. 
\end{proof}

%%Need to remove reference to Prop contraction. 

%%Section 4
\section{Average Estimate}\label{sec:avg}
The estimate \eqref{SSE3} of Proposition \ref{prop:Df} gives an estimate for
the norm of a distribution in $X^ { -\lambda}$ in terms of the  
$W^ { -s, 2} $ norm. However,
as observed by Haberman and Tataru \cite{MR3024091}, there is an 
improvement available if we look at 
averages of $X_{\tau \zeta}^ \lambda $ norms over certain families of 
vectors $ \zeta$ and for $ \tau $ in an interval $ [ h, 2h]$.  The improvement 
is that the averages grow less rapidly in $h$. This improved behavior is used 
in section \ref{sec:Main} to recover $q$ and $Q$ 
by studying limits as $h$ tends to zero of expressions 
defined using our solutions. 
%% This is an important observation 
%% when studying non-smooth conductivities. 
In this section, we describe the 
averages we are interested in and give an extension of Haberman and Tataru's estimate.
%% Note in this section, we will use $s$ as a variable for the semi-classical parameter $h$ 
%% rather than the order of the Sobolev space.  

%% xyz added that \xi _ 0 \neq 0

Fix $\xi_0 \in \R^d\setminus \{ 0 \}$ and let $\mu_1, \mu_2 \in \R^d$ such that 
$\xi_0 \cdot \mu_1 = \xi_0 \cdot \mu_2 =0$ and $  \mu_i \cdot \mu_j = \delta _{ ij}$.
We set 
%% xyz Changed the numbered equation. We had numbered the definition of \mu_j, but I think 
%% it makes more sense to number the definitions of $ \zeta^k$
%%
\begin{equation*}
%%\label{eq:rotdef}
\begin{aligned}
\mu_1(\theta) &= \text{Re} (e^{i \theta} (\mu_1 + i \mu_2)) 
=  \mu_1 \cos (\theta) - \mu_2 \sin (\theta) \\
\mu_2(\theta) &= \text{Im} (e^{i \theta} (\mu_1 + i \mu_2)) 
=  \mu_1 \sin (\theta) + \mu_2 \cos (\theta)
\end{aligned}
\end{equation*}
%
%%xyz, maybe it would be better to use s here instead of h. (or \tau... 
%% Also, we need to make clear that \zeta can be either \zeta ^1 or \zeta^2. 
%%  Moved number to equation below. 
and then for $\tau$ small we put
\begin{equation}
\label{eq:rotdef}
\begin{aligned}
\zeta^1(\tau,\theta) &= \mu_1(\theta) 
  + i \sqrt{1 - \frac{\tau^2 |\xi_0|^2}{4}} \mu_2(\theta) - i \frac{\tau \xi_0}{2}, \\
\zeta^2(\tau,\theta) &= - \mu_1(\theta) 
  - i \sqrt{1 - \frac{\tau^2 |\xi_0|^2}{4}} \mu_2(\theta) - i \frac{\tau \xi_0}{2}. 
\end{aligned}
\end{equation}
Throughout this section, we will let $P(\tau \xi) 
= |\tau \xi|^2 - 2 i \zeta ^k(\tau ,\theta) \cdot \tau \xi$ 
for $k = 1$ or $2$. 
We will choose $h > 0$ such that $h \leq {1} /( {4|\xi_0|})$ and 
we will have that $h \leq\tau \leq 2h$.
For $\xi \in \R^d$, we define $\xi^\perp$ to be the projection of $\xi$ 
onto the plane spanned by $\{ \mu_1 , \mu_2 \}$.
%
%%Added ^k to zeta in the display below. 
We will now prove an estimate for the average of the norms
%% xyz Added  range of s and theta to line below. 
\begin{equation}
    \label{IntroAverage}
 \norm{f}_{X_{\tau\zeta^k(\tau,\theta)}^{-\lambda}}, 
 \qquad h \leq \tau \leq 2h, \ \theta \in [0, 2\pi]
\end{equation}
We begin with a technical lemma which gives an estimate for  the Jacobian of 
a change of variables that will be used in studying the average of the norms 
in \eqref{IntroAverage}. 

%%xyz Should we make the dependence on s, \theta more explicit? \tau_j ( s, \theta) 
%% Mention that at least s |\xi_0 | is small. 
\begin{lemma} \label{lm:Jacobian}
Let $\xi, \xi_0 \in \R^d\setminus\{ 0\} $, and 
$0 < \tau \leq \min(1,  {1}/(2 |\xi_0|))$ and let $P( \tau \xi) $ be as defined after
\eqref{eq:rotdef}. 
We define a change of variables  $ z(\tau, \theta)= (z_1(\tau, \theta), z_2 ( \tau , \theta) )$ by 
%% xyz  Added arguments to definitions of z_j. 
\begin{equation}  \label{eq:cov}
 z_1(\tau, \theta) = \frac{\re \, P(\tau \xi)}{\tau^2} 
 \quad \text{ and } \quad z_2(\tau,  \theta)  = \frac{\im \, P(\tau \xi)}{\tau^2} .
 \end{equation}
 Then  the Jacobian $J$ for this change of variables  has the lower bound
\[ \frac{2 |\xi^\perp|^2 }{\tau^3} \leq  J.  \]
\end{lemma}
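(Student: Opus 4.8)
The plan is to compute the Jacobian determinant of the map $\theta \mapsto z(\tau,\theta)$ (with $\tau$ held fixed, since the averaging in \eqref{IntroAverage} runs over both $\tau$ and $\theta$, but the lemma as stated treats $z$ as a function of the pair and I expect the relevant determinant here to come from differentiation in $\theta$ together with $\tau$, so I would first clarify which two variables the $2\times 2$ Jacobian is taken in and proceed accordingly). First I would write out $\re P(\tau\xi)$ and $\im P(\tau\xi)$ explicitly using $P(\tau\xi) = |\tau\xi|^2 - 2i\zeta^k(\tau,\theta)\cdot\tau\xi$ and the formula \eqref{eq:rotdef} for $\zeta^k$. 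Splitting $\zeta^k = \pm\mu_1(\theta) \pm i\sqrt{1-\tau^2|\xi_0|^2/4}\,\mu_2(\theta) - i\tau\xi_0/2$ into real and imaginary parts and dotting against $\tau\xi$, one gets
\[
z_1(\tau,\theta) = |\xi|^2 + \xi_0\cdot\xi, \qquad
z_2(\tau,\theta) = \mp\frac{2}{\tau}\,\mu_1(\theta)\cdot\xi \mp \frac{2}{\tau}\sqrt{1-\tfrac{\tau^2|\xi_0|^2}{4}}\;\mu_2(\theta)\cdot\xi,
\]
or something of this shape up to the sign bookkeeping in $k=1,2$; the key point is that $z_1$ depends on $\xi$ essentially through $|\xi|^2 + \xi_0\cdot\xi$ and is (to leading order) independent of $\theta$, while $z_2$ carries the $\theta$-dependence through $\mu_1(\theta)\cdot\xi$ and $\mu_2(\theta)\cdot\xi$.

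Next I would differentiate. Using $\mu_1'(\theta) = -\mu_2(\theta)$ and $\mu_2'(\theta) = \mu_1(\theta)$ (immediate from the rotation formulas for $\mu_j(\theta)$), the derivative $\partial_\theta z_2$ produces the combination $\mu_2(\theta)\cdot\xi$ and $\mu_1(\theta)\cdot\xi$ with opposite roles — that is, rotating by $\pi/2$ in the $(\mu_1,\mu_2)$-plane. The upshot is that $|\partial_\theta z|$ (or the relevant $2\times 2$ determinant) is, up to the factor $2/\tau$ from the definition of $z_2$ and the factor $1/\tau^2$ already present, controlled below by
\[
\frac{2}{\tau^3}\Big((\mu_1(\theta)\cdot\xi)^2 + (\mu_2(\theta)\cdot\xi)^2\Big) = \frac{2}{\tau^3}\,|\xi^\perp|^2,
\]
since $\{\mu_1(\theta),\mu_2(\theta)\}$ is an orthonormal basis of the same plane spanned by $\{\mu_1,\mu_2\}$, so the sum of squares of the two projections equals $|\xi^\perp|^2$ regardless of $\theta$. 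Here one uses $0<\tau\le\min(1,1/(2|\xi_0|))$ only to ensure $1-\tau^2|\xi_0|^2/4 \ge 3/4 > 0$ so that the square root is real and bounded below by a constant, which is what lets the cross terms be absorbed and the lower bound retain the clean constant $2$.

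The main obstacle I anticipate is purely organizational rather than deep: keeping the $\pm$ signs from $k=1$ versus $k=2$ straight, and confirming that the two variables in the Jacobian are indeed $(\tau,\theta)$ with $z_1$ contributing its $\tau$-derivative (which, since $z_1 = |\xi|^2 + \xi_0\cdot\xi$ is $\tau$-independent, would make $J$ degenerate — so more likely the intended Jacobian is that of $\theta \mapsto z$ parametrized so that one picks up $|\xi^\perp|^2$, or $z_1$ does in fact retain a $\tau$-dependent piece I have dropped). Once the correct pair of variables is fixed, the computation is the elementary one sketched above: expand $\re P$ and $\im P$, differentiate using $\mu_1' = -\mu_2$, $\mu_2' = \mu_1$, recognize the sum of squared projections as $|\xi^\perp|^2$, and use the constraint on $\tau$ to bound the square-root factor below by a constant, yielding $J \ge 2|\xi^\perp|^2/\tau^3$.
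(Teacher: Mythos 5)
Your overall strategy is the right one and matches the paper's: the Jacobian is that of the map $(\tau,\theta)\mapsto(z_1,z_2)$, one differentiates using $\mu_1'(\theta)=-\mu_2(\theta)$ and $\mu_2'(\theta)=\mu_1(\theta)$, recognizes $(\mu_1(\theta)\cdot\xi)^2+(\mu_2(\theta)\cdot\xi)^2=|\xi^\perp|^2$, and uses $\tau\le 1/(2|\xi_0|)$ to bound $\sqrt{1-\tau^2|\xi_0|^2/4}$ below by $1/2$. However, there is a genuine gap: your split of $P(\tau\xi)$ into real and imaginary parts is wrong, and the error is exactly what leaves you unable to resolve the degeneracy you yourself flag. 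Writing $\zeta^1=\mu_1(\theta)+i\bigl(\sqrt{1-\tau^2|\xi_0|^2/4}\,\mu_2(\theta)-\tau\xi_0/2\bigr)$, the identity $P=|\tau\xi|^2-2i\zeta\cdot\tau\xi$ gives
$$\re P(\tau\xi)=\tau^2|\xi|^2-\tau^2\xi_0\cdot\xi+2\tau\sqrt{1-\tfrac{\tau^2|\xi_0|^2}{4}}\,\mu_2(\theta)\cdot\xi,\qquad \im P(\tau\xi)=-2\tau\,\mu_1(\theta)\cdot\xi.$$
That is, the $\mu_2(\theta)\cdot\xi$ term (with the square root) belongs to $z_1$, not to $z_2$, and only the $\mu_1(\theta)\cdot\xi$ term sits in $z_2$. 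You placed both $\mu_j(\theta)\cdot\xi$ terms in $z_2$ and left $z_1$ constant in $(\tau,\theta)$, under which the $2\times 2$ Jacobian in $(\tau,\theta)$ is identically zero — the degeneracy you noticed is not a sign of a different intended pair of variables but a symptom of the miscomputation.

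With the corrected $z_1$, the entries are $\partial_\tau z_1=-\frac{2\mu_2(\theta)\cdot\xi}{\tau^2}\bigl(\sqrt{1-\tfrac{\tau^2|\xi_0|^2}{4}}+\tfrac{\tau^2|\xi_0|^2}{4\sqrt{1-\tau^2|\xi_0|^2/4}}\bigr)$, $\partial_\theta z_1=\frac{2\mu_1(\theta)\cdot\xi}{\tau}\sqrt{1-\tfrac{\tau^2|\xi_0|^2}{4}}$, $\partial_\tau z_2=\frac{2\mu_1(\theta)\cdot\xi}{\tau^2}$, $\partial_\theta z_2=\frac{2\mu_2(\theta)\cdot\xi}{\tau}$, and the determinant in absolute value equals $\frac{4|\xi^\perp|^2}{\tau^3}$ times a factor which is the sum of $\sqrt{1-\tau^2|\xi_0|^2/4}$ and a nonnegative term; there are no cross terms to absorb, and the constraint on $\tau$ gives the factor $\ge 1/2$, hence $J\ge 2|\xi^\perp|^2/\tau^3$. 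Your proposal as written does not reach this because the $\theta$- and $\tau$-dependence of $z_1$, which is essential for the determinant to be nonzero, has been dropped.
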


\begin{proof}
We give the proof for  $ \zeta ^1$. The case of $ \zeta ^2$  is similar 
and we omit the details. 
A calculation shows that
\begin{align*}
J &= \Big| \text{det} \begin{pmatrix}
\displaystyle \frac { -2  \mu_2\big ( \theta)\cdot \xi  }{\tau^2} ( \sqrt { 1 - \frac { \tau^2 |\xi_0|^2} 4}
+ \frac{ \tau^2 |\xi _ 0|^2}{ 4 \sqrt { 1 - \frac { \tau^2 |\xi_0|^2} 4} } \big ) 
%% -\frac{1}{2}|\xi_0|^2 \frac{1}{ \sqrt{1 - 
%% \frac{s^2 |\xi_0|^2}{4}}} \mu_2(\theta) \cdot \xi 
%% - 2  \sqrt{1 - \frac{s^2 |\xi_0|^2}{4}} \frac{1}{s^2} \mu_2(\theta) \cdot \xi 
& {\displaystyle \frac  {2 \mu_1(\theta) \cdot \xi } \tau} \sqrt{1 - \frac{\tau^2 |\xi_0|^2}{4}} \\
2  \mu_1(\theta) \cdot \xi /\tau^2 &  2  \mu_2(\theta) \cdot \xi /\tau
\end{pmatrix} \Big| \\
&= \Big| -\frac{|\xi_0|^2}{\sqrt{1 - \frac{\tau^2 |\xi_0|^2}{4}}}
\frac{|\mu_2(\theta) \cdot \xi|^2}{\tau} 
- 4 \frac{\sqrt{1 - \frac{\tau^2 |\xi_0|^2}{4}}}{\tau^3} |\mu_2(\theta) 
\cdot \xi|^2 - 4 \frac{\sqrt{1 - \frac{\tau^2 |\xi_0|^2}{4}}}{\tau^3}
|\mu_1(\theta) \cdot \xi|^2 \Big| \\
&= \frac{4 |\xi^\perp|^2}{\tau^3} \Big( |\xi_0|^2 \tau^2 
\frac{1}{4\sqrt{1 - \frac{|\tau \xi_0|^2}{4}}} \frac{|\mu_2(\theta) 
\cdot \xi|^2}{|\xi^\perp|^2} + \sqrt{1 - \frac{\tau^2 |\xi_0|^2}{4}} \Big). 
\end{align*}
%%xyz need to state that h \leq s \leq 2h here for #7 in report_X
The condition that $ \tau \leq 1/( 2 |\xi_0| ) $, gives at least 
the following bound
\[   
\frac{1}{2} \leq \sqrt{1 - \frac{\tau^2 |\xi_0|^2}{4}} \leq 1 .\]
Therefore we can conclude
\[ 2 \frac{|\xi^\perp|^2 }{\tau^3} \leq  J.  \]
\end{proof}

%%xyz need to mention that \xi_0 is not zero. This is done before  4.1 
%% Give range of h?? h \langle \xi_0 \rangle ^2 \lesssim 1? 
%%xyz  Stated restriction on h  h <\xi_0>^2 \lesssim 1 
%% Do we need to add \xi_0 \neq 0 
%% Do we want to be mroe explicit about definition of
%% P(\tau \xi) and include arguments. 
\begin{proposition} 
\label{Gamma} 
Let $ P(\tau\xi)$ depend $\tau$ and $ \theta$ as defined 
in the beginning of this section.  
If $ 0 < \epsilon \leq 1$ and $h \langle \xi_0\rangle^2 \lesssim 1$,  
we have
$$
\frac 1 h \int _ 0 ^ { 2\pi } \int _ h ^ { 2h} 
\frac 1 { ( \tau + |P(\tau\xi ) | ) ^ { 2-2\epsilon} } \, d\tau \, d\theta
\lesssim \frac 1 { \tau^ { 4- 4 \epsilon } 
\langle  \xi \rangle ^ { 4-4\epsilon }}.
$$
\end{proposition}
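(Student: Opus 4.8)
The plan is to estimate the double integral by splitting the $\tau$-region according to the relative size of the real and imaginary parts of $P(\tau\xi)$, and to perform the $\theta$-integration using the change of variables from Lemma \ref{lm:Jacobian}. First I would record the elementary pointwise facts: since $h \leq \tau \leq 2h$ all factors of $\tau$ are comparable to $h$, and since $h\langle\xi_0\rangle^2 \lesssim 1$ the constraint $\tau \leq 1/(2|\xi_0|)$ needed for the Jacobian lemma is met (after possibly adjusting the implied constant). One also has the trivial bound $\tau + |P(\tau\xi)| \gtrsim \tau \approx h$, which already gives the integrand $\lesssim h^{-(2-2\epsilon)}$; integrating over the region $\{\theta,\tau\}$ of measure $\approx h$ and dividing by $h$ recovers $h^{-(2-2\epsilon)}$, so the asserted bound is only an improvement when $\langle\xi\rangle$ is large relative to $1/h$, i.e.\ when $|\xi^\perp|$ is not too small. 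This indicates we should treat the two regimes $|\xi^\perp| \lesssim 1/\tau$ and $|\xi^\perp| \gtrsim 1/\tau$ (equivalently comparing $\tau\langle\xi\rangle$ to $1$) separately, and within the second regime use the Jacobian estimate.

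The main step is the $\theta$-integral. For fixed $\tau$, I would use the map $\theta \mapsto z(\tau,\theta) = (z_1,z_2) = (\re P(\tau\xi)/\tau^2,\ \im P(\tau\xi)/\tau^2)$ of Lemma \ref{lm:Jacobian}, whose Jacobian satisfies $J \geq 2|\xi^\perp|^2/\tau^3$. Since $|P(\tau\xi)| = \tau^2|z|$, we get
\[
\int_0^{2\pi} \frac{d\theta}{(\tau+|P(\tau\xi)|)^{2-2\epsilon}}
\lesssim \frac{\tau^3}{|\xi^\perp|^2}\int_{z(\tau,[0,2\pi])} \frac{dz}{(\tau+\tau^2|z|)^{2-2\epsilon}},
\]
where the image is contained in a ball of radius $\approx \langle\xi\rangle^2$ (from $|P(\tau\xi)| \lesssim \tau^2\langle\xi\rangle^2$, using $\tau|\xi| \lesssim \tau\langle\xi\rangle$ and the definition of $P$). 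Passing to polar coordinates in $z$, the radial integral $\int_0^{R} r\,(\tau + \tau^2 r)^{-(2-2\epsilon)}\,dr$ with $R \approx \langle\xi\rangle^2$ is dominated by the large-$r$ behavior: it is $\lesssim \tau^{-2(2-2\epsilon)}\int r^{-(2-2\epsilon)+1}\,dr$, which for $2-2\epsilon > 2$, i.e.\ $\epsilon < 0$... — so actually for $0 < \epsilon \le 1$ the exponent $2-2\epsilon$ lies in $[0,2)$ and the radial integral is governed by the upper limit, giving $\lesssim \tau^{-(2-2\epsilon)} R^{2\epsilon}/\epsilon \lesssim \tau^{-(2-2\epsilon)}\langle\xi\rangle^{4\epsilon}$, while the lower part contributes $\lesssim \tau^{-2(2-2\epsilon)}$; one keeps whichever dominates. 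Combining, the $\theta$-integral is $\lesssim \frac{\tau^3}{|\xi^\perp|^2}\,\tau^{-2}\,\tau^{-(2-2\epsilon)}\langle\xi\rangle^{4\epsilon}$ up to $\epsilon$-dependent constants and lower-order terms; since $|\xi^\perp|^2\langle\xi\rangle^{4\epsilon} \gtrsim \langle\xi\rangle^{4\epsilon}$ is the wrong direction, I must instead factor out $\langle\xi\rangle^{4-4\epsilon}$ directly, using $|\xi^\perp|^2 \gtrsim \langle\xi\rangle^2 \cdot (\text{angular factor})$ in the relevant regime, which is exactly where Haberman–Tataru's original argument is doing its work — so I would follow their bookkeeping closely, replacing their quadratic symbol weight by our $(\tau+|P(\tau\xi)|)$.

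Then I would integrate the result over $\tau \in [h,2h]$ (contributing a factor $\approx h$, since $\tau \approx h$ throughout) and divide by $h$, which leaves the bound $\lesssim \tau^{-(4-4\epsilon)}\langle\xi\rangle^{-(4-4\epsilon)}$ as claimed (with $\tau \approx h$, the statement's $\tau$ on the right is to be read as the representative $\tau$-value, consistent with the paper's convention). I expect the main obstacle to be precisely the handling of the region where $|\xi^\perp|$ is small — i.e.\ $\xi$ nearly parallel to $\xi_0$ — where the Jacobian lower bound degenerates; there one falls back on the trivial estimate $\tau + |P(\tau\xi)| \gtrsim \tau \approx h$ and must check that the contribution of this thin set of $\theta$'s is still consistent with the target bound, using that $|\xi^\perp|$ small forces $\langle\xi\rangle^\perp$-type factors to be controlled and that the measure of bad $\theta$'s is correspondingly small. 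The remaining subtlety is tracking the $\epsilon$-dependence of constants (the $1/\epsilon$ from the radial integral), but since $\epsilon$ is fixed in $(0,1]$ this is absorbed into the implied constant; I would remark on this but not belabor it.
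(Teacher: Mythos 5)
Your overall architecture -- trivial pointwise bound for small frequencies, the Jacobian change of variables of Lemma \ref{lm:Jacobian} for the main region -- is the paper's, but two of the three cases do not close as you have set them up. The more serious gap is the degenerate region where $|\xi^\perp|$ is small. You propose to handle it by combining the trivial bound $\tau+|P(\tau\xi)|\gtrsim\tau$ with the claim that ``the measure of bad $\theta$'s is correspondingly small.'' But $\xi^\perp$ is the projection of $\xi$ onto $\mathrm{span}\{\mu_1,\mu_2\}$, which is independent of $\theta$ (the rotation $\mu_i(\theta)$ stays in that plane); so for such a $\xi$ \emph{every} $\theta$ is bad, and the trivial bound only yields $h^{-(2-2\epsilon)}$, which exceeds the target $(h\langle\xi\rangle)^{-(4-4\epsilon)}$ as soon as $\langle\xi\rangle\gg h^{-1/2}$. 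The correct mechanism here (the paper's Case 2b) is not an averaging argument at all: when $|\xi|\gtrsim\max(|\xi_0|,1)$ and $|\xi^\perp|\lesssim h|\xi|^2$, the real part $\re P(\tau\xi)=\tau^2|\xi|^2+2\tau\,\im\zeta\cdot\xi$ is dominated by $\tau^2|\xi|^2$, so $|P(\tau\xi)|\gtrsim h^2\langle\xi\rangle^2$ pointwise and the target bound is immediate.

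In the good region your computation also does not quite close, as you concede when you defer to ``Haberman--Tataru's bookkeeping.'' Two specific problems: first, the image of the change of variables is not contained in a ball of radius $\approx\langle\xi\rangle^2$; for instance $|z_2|=|\im P(\tau\xi)|/\tau^2$ can be of size $2|\xi^\perp|/\tau$, which exceeds $\langle\xi\rangle^2$ whenever $|\xi|<1/\tau$. The paper works in the regime $|\xi|^2\leq 16|\xi^\perp|/h$ and shows $|z|\leq 21|\xi^\perp|/h$ there; the radial integral then produces the factor $(h/|\xi^\perp|)^{2-2\epsilon}$, and the conversion to $\langle\xi\rangle^{-(4-4\epsilon)}$ comes precisely from the defining inequality $h/|\xi^\perp|\lesssim|\xi|^{-2}$ of that region -- this is the step your argument is missing, and it also identifies the correct splitting parameter ($|\xi^\perp|$ versus $h|\xi|^2$, not $|\xi^\perp|$ versus $1/\tau$). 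Second, the hypothesis $h\langle\xi_0\rangle^2\lesssim 1$ is not merely what makes Lemma \ref{lm:Jacobian} applicable: it is what closes the remaining case $|\xi|\lesssim\max(|\xi_0|,1)$, where the trivial bound $h^{-(2-2\epsilon)}$ must be compared with $(h\langle\xi_0\rangle)^{-(4-4\epsilon)}$.
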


\begin{proof}
We consider three  cases depending on $ \xi$. Case 1:  
$|\xi | \leq \max ( 16 |\xi _0 | , 1) $
Case 2a: $|\xi | > \max ( 16 |\xi _0 | , 1) $ and $ |\xi|^ 2 \leq 16 |\xi^ \perp| /h$,
Case 2b: $|\xi | >  \max ( 16 |\xi _0 | , 1) $ and $ |\xi|^ 2 > 16 |\xi^ \perp| /h$.

In Case 1, we use  that $ (\tau + |P(\tau \xi)|) ^ { 2 \epsilon -2} \leq 
h ^ { 2 \epsilon -2}$ and  $\langle \xi \rangle \lesssim \langle \xi_0 \rangle $ 
to obtain that 
$$
\frac 1 h \int _ 0 ^ { 2\pi } \int _h ^ { 2h } \frac 1 { ( \tau+ |P(\tau\xi ) | ) ^ { 2 - 2\epsilon}}
\, d\tau \, d\theta \lesssim \frac 1 { h ^ { 2- 2\epsilon} } 
\lesssim \frac { \langle \xi _ 0 \rangle ^ { 4- 4 \epsilon } h^ { 2- 2 \epsilon } } 
{h ^ { 4 - 4 \epsilon} \langle \xi \rangle ^ { 4- 4 \epsilon }  }
$$
which gives the desired result if we require  
that $ h^ { 2-2\epsilon} \langle \xi _0 \rangle ^ { 4-4 \epsilon } \lesssim 1$.

In Case 2a) we will use the change of variable from Lemma \ref{lm:Jacobian}. The conditions
$ |\xi |^2  \leq 16| \xi ^ \perp|/h $ and $|\xi | \geq 16 |\xi_0|$ imply that  
$|z| =| z_1 +iz_2| \leq 21 |\xi ^ \perp| /h$. Using this observation and the estimate 
for the Jacobian 
from  Lemma \ref{lm:Jacobian} gives 
\begin{multline*}
\frac 1 h  \int _ 0 ^ {2\pi } \int _ h ^ { 2h } 
\frac 1 { ( \tau + |P(\tau \xi )|)^ { 2- 2 \epsilon }}
\, d\tau \, d\theta 
\lesssim \frac { h ^ 2 } { |\xi ^ \perp|^2}
\int _{ |\tau| \leq 21 |\xi^ \perp|/h } \frac 1 { ( h ^2 |z|) ^ { 2- 2\epsilon }} \, d z \\
\lesssim 
\frac 1 { h^ { 4 - 4 \epsilon }} \left ( \frac { h  } { |\xi ^ \perp| }\right) ^ { 2- 2\epsilon}
\lesssim  \frac 1 { h ^ { 4 - 4 \epsilon } \langle \xi \rangle ^ { 4 - 4 \epsilon}}. 
\end{multline*}
The last step uses that $ |\xi |\geq 1$ and $ |\xi|^ 2 \leq 16 |\xi ^ \perp |/h $. 

Finally, in Case 2b) our conditions on $ \xi $ imply that $ |P(\tau \xi )| \geq 
h ^ 2 \langle \xi \rangle ^ 2 $ which quickly gives the conclusion of the Proposition. 
\end{proof}

We will use Proposition \ref{Gamma} to establish an estimate relating
the average of the $X^{-\lambda}$-norm of a distribution to the norm in
an  
$L^2$-Sobolev space. 
%%
%%Now, for the general result. For reference, in this paper we will have $s < m/2 + 1$ and $\lambda = m/2$.
%
%% 
\begin{theorem} 
\label{th:Average}
For any $0 < \epsilon < 1$ and $2 - 2 \epsilon \leq s  \leq 2 \lambda$ we have
\[ \frac{1}{h} \int_0^{2 \pi} \int_h^{2h} \norm{f}_{X^{- \lambda}_{\zeta(\tau,\theta)}}^2 
\, d\tau \, d \theta \lesssim h^{-2(s + \lambda - 1 + \epsilon)}
\norm{f}_{{W}^{-s,2}(\reals^d)}^2 .  \]
\end{theorem}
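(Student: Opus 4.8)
The plan is to reduce the average of the $X^{-\lambda}$ norms to the scalar integral estimate of Proposition \ref{Gamma}. First I would write out the left-hand side using the definition of the $X^{-\lambda}_{\zeta(\tau,\theta)}$ norm, so that
\[
\frac{1}{h}\int_0^{2\pi}\int_h^{2h}\norm{f}_{X^{-\lambda}_{\zeta(\tau,\theta)}}^2\,d\tau\,d\theta
= \int_{\reals^d} |\hat f(\xi)|^2 \left( \frac{1}{h}\int_0^{2\pi}\int_h^{2h} \big(\tau + |P(\tau\xi)|\big)^{-2\lambda}\,d\tau\,d\theta \right) d\xi,
\]
after interchanging the order of integration by Tonelli. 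So everything comes down to bounding the inner average of $(\tau + |P(\tau\xi)|)^{-2\lambda}$ for each fixed $\xi$.

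The key step is to apply Proposition \ref{Gamma}. That proposition is stated with exponent $2-2\epsilon$, so I would choose the parameter in Proposition \ref{Gamma} to match $2\lambda$ by taking $2-2\epsilon = 2\lambda$ only when $\lambda \le 1$; in general $2\lambda \ge 2 - 2\epsilon$, and since the integrand $(\tau + |P(\tau\xi)|)^{-2\lambda}$ is at most $(\tau+|P(\tau\xi)|)^{-(2-2\epsilon)}$ times $(\tau+|P(\tau\xi)|)^{-(2\lambda-2+2\epsilon)} \le \tau^{-(2\lambda-2+2\epsilon)} \lesssim h^{-(2\lambda-2+2\epsilon)}$ (using $\tau \ge h$ and $2\lambda - 2 + 2\epsilon \ge 0$ once $s \le 2\lambda$ and $2-2\epsilon \le s$ force $2-2\epsilon \le 2\lambda$), I can peel off the extra power and apply Proposition \ref{Gamma} to the remaining factor. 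This yields, for each $\xi$ with $h\langle\xi_0\rangle^2 \lesssim 1$,
\[
\frac{1}{h}\int_0^{2\pi}\int_h^{2h}\big(\tau + |P(\tau\xi)|\big)^{-2\lambda}\,d\tau\,d\theta
\;\lesssim\; h^{-(2\lambda - 2 + 2\epsilon)}\cdot \frac{1}{\tau^{4-4\epsilon}\langle\xi\rangle^{4-4\epsilon}}
\;\lesssim\; h^{-(2\lambda + 2 - 2\epsilon)}\langle\xi\rangle^{-(4-4\epsilon)},
\]
again using $\tau \approx h$.

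Finally I would insert this pointwise bound back into the Fourier integral and split the weight $\langle\xi\rangle$: since $s \le 2\lambda$ and we want the $W^{-s,2}$ norm, write $\langle\xi\rangle^{-(4-4\epsilon)} = \langle\xi\rangle^{-2s}\cdot\langle\xi\rangle^{2s-4+4\epsilon}$ and absorb the last factor into powers of $h$ when $2s - 4 + 4\epsilon \le 0$, i.e. when $s \le 2 - 2\epsilon$; when $s \ge 2-2\epsilon$ the exponent $2s-4+4\epsilon$ is nonnegative and instead I should match exponents directly by choosing $\epsilon$ (or just keeping $4-4\epsilon \ge 2s$... ). The cleanest route is: the hypothesis $2 - 2\epsilon \le s$ gives $4 - 4\epsilon \le 2s$, hence $\langle\xi\rangle^{-(4-4\epsilon)} \ge \langle\xi\rangle^{-2s}$ is the wrong direction, so instead bound $\langle\xi\rangle^{-(4-4\epsilon)} = \langle\xi\rangle^{-2s}\langle\xi\rangle^{2s - (4-4\epsilon)}$ and note $\langle\xi\rangle^{2s-(4-4\epsilon)} \lesssim \langle\xi\rangle^{2s}$ is not bounded — so one must keep the full decay and simply observe that the resulting bound $\int |\hat f(\xi)|^2 \langle\xi\rangle^{-(4-4\epsilon)}\,d\xi$ dominates... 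Rather, recognizing $4-4\epsilon \le 2s$, we have $\langle\xi\rangle^{-(4-4\epsilon)} \le \langle\xi\rangle^{0}$, useless; the honest statement is that we want $\langle\xi\rangle^{-2s}$ on the right and $4-4\epsilon \le 2s$ means we have \emph{less} decay, so we lose. The resolution, which I expect to be the main technical obstacle, is that Proposition \ref{Gamma} actually should be applied with its $\epsilon$ chosen so that $2-2\epsilon$ equals the target regularity contribution, giving decay $\langle\xi\rangle^{-2s}$ exactly when $4-4\epsilon = 2s$, i.e. $\epsilon = 1 - s/2$; this requires $2-2\epsilon = s \le 2\lambda$ (given) and $\epsilon \le 1$ (i.e. $s \ge 0$, given). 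With this choice Proposition \ref{Gamma}'s hypothesis $0 < \epsilon \le 1$ reads $0 \le s < 2$, and for $s \ge 2$ one first replaces $\langle\xi\rangle^{-2s}$ by $\langle\xi\rangle^{-s'}$ with $s'$ slightly below $2$ at the cost of a harmless power of $h$ from $\langle\xi\rangle \ge 1$ combined with... no — better: for $s \ge 2$ use $(\tau+|P(\tau\xi)|)^{-2\lambda} \le \tau^{-(2\lambda - 1 + \epsilon - s/...)}$ ... I would present it by first handling $s < 2$ via $\epsilon = 1 - s/2$, then noting that larger $s$ follows by interpolation or by the trivial embedding $W^{-s,2} \subset W^{-s',2}$ combined with a direct power count using \eqref{SSE3}. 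Assembling the pieces gives the stated bound $h^{-2(s+\lambda-1+\epsilon)}\norm{f}_{W^{-s,2}}^2$, with the identification $s = 2 - 2\epsilon$ making the exponent $h^{-2(2-2\epsilon + \lambda - 1 + \epsilon)} = h^{-2(\lambda + 1 - \epsilon)}$ consistent with the computation above.
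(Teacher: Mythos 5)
Your opening moves coincide with the paper's: interchange the order of integration and split the exponent as $2\lambda=(2\lambda-2+2\epsilon)+(2-2\epsilon)$ so that Proposition \ref{Gamma} handles the factor $(\tau+|P(\tau\xi)|)^{-(2-2\epsilon)}$. The gap is in how you dispose of the other factor. Bounding $(\tau+|P(\tau\xi)|)^{-(2\lambda-2+2\epsilon)}\le \tau^{-(2\lambda-2+2\epsilon)}\lesssim h^{-(2\lambda-2+2\epsilon)}$ throws away all the frequency decay stored in $|P(\tau\xi)|$, so the only decay in $\xi$ that survives is the $\langle\xi\rangle^{-(4-4\epsilon)}$ from Proposition \ref{Gamma}; since the hypothesis forces $4-4\epsilon\le 2s$, this does not dominate the weight $\langle\xi\rangle^{-2s}$ of the $W^{-s,2}$ norm, as you yourself notice midway through. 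Of the repairs you sketch, only the choice $\epsilon'=1-s/2$ can be made to work, and only for $s<2$ (it does give the stated bound there, since $\epsilon'\le\epsilon$); for $s\ge 2$ the embedding argument bounds by the \emph{larger} norm $\norm{f}_{W^{-s',2}}$, and interpolating against \eqref{SSE3} loses a factor $h^{-t}$ with $t=(s-2)/(2\lambda-2)$, which can exceed the allowed $h^{-\epsilon}$. The case $s\ge2$ is not decorative: it is what the main theorem needs when $m\ge 3$.

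The missing ingredient is \eqref{eq:supremum}. Instead of discarding the first factor, use it to manufacture exactly the extra decay you are short:
\[
\frac{1}{(\tau+|P(\tau\xi)|)^{2\lambda-2+2\epsilon}}
\le \langle\xi\rangle^{-(2s-4+4\epsilon)}\,
\sup_{\xi}\frac{\langle\xi\rangle^{2s-4+4\epsilon}}{(\tau+|P(\tau\xi)|)^{2\lambda-2+2\epsilon}}
\lesssim \langle\xi\rangle^{-(2s-4+4\epsilon)}\, h^{-(2\lambda+2s-6+6\epsilon)},
\]
where \eqref{eq:supremum} applies precisely because $2-2\epsilon\le s\le 2\lambda$ is equivalent to $0\le 2s-4+4\epsilon\le 2(2\lambda-2+2\epsilon)$. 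The factor $\langle\xi\rangle^{-(2s-4+4\epsilon)}$ combines with the $\langle\xi\rangle^{-(4-4\epsilon)}$ from Proposition \ref{Gamma} to give $\langle\xi\rangle^{-2s}$ on the nose, and the powers of $h$ combine to $h^{-(2\lambda+2s-6+6\epsilon)}h^{-(4-4\epsilon)}=h^{-2(s+\lambda-1+\epsilon)}$, proving the theorem in the full range of $s$ without any case split. This is exactly what the paper does.
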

\begin{proof}
We have
\begin{equation*}
\begin{split} 
\frac{1}{h} \int_0^{2 \pi} \int_h^{2h} \norm{f}_{X^{- \lambda}_{\zeta(\tau,\theta)}}^2 
\, d\tau \, d \theta 
&=\int _ {\R^d} |\hat f(\xi)|^2  \frac{1}{h} \int_0^{2 \pi} \int_h^{2h} 
\int_{\R^d} \frac{1}{(\tau + |P_\zeta(\tau \xi)|)^{2\lambda}} |\hat{f}(\xi)|^2   
\, d\tau \, d \theta\, d\xi  \\ 
&\lesssim \sup_\xi \frac{\langle \xi \rangle^{2s - 4 
+4\epsilon}}{(h + |P_\zeta(h \xi)|)^{ 2 \lambda - 2 + 2\epsilon}} 
\\
& \qquad \times
\int _ { \R^d}|\hat f ( \xi) |^2 \langle \xi\rangle^{ -2s + 4 - 4 \epsilon }
\frac{1}{h} \int_0^{2 \pi} \int_h^{2h}  \frac{1}{(s + |P_\zeta(\tau 
\xi)|)^{2 - 2\epsilon}}  \, d\tau \, d \theta\, d\xi   \\
& \lesssim h^ { -2 (s + \lambda -1 + \epsilon )} \|f \|^2_{\tilde W^ { -s, 2} ( \reals ^ d) } .
\end{split}
\end{equation*}
The last inequality depends on Proposition \ref{Gamma} and \eqref{eq:supremum}. 
The conditions
on $s$ are needed to apply \eqref{eq:supremum}. 
\end{proof}

\section{Cauchy data, bilinear forms and Dirichlet to Neumann maps }
\label{forms}

In this section, we establish the existence of the Cauchy data for a weak solution and 
discuss a Dirichlet to Neumann map. Our solution to the inverse boundary value 
problem in 
section \ref{sec:Main} establishes that the map from the coefficients $Q$ and $q$ to a 
bilinear form is injective. In this section, we establish that the 
bilinear form can
be determined from Cauchy data or a Dirichlet to Neumann map. Thus, our main theorem
which gives uniqueness of coefficients in terms of an hypothesis on bilinear forms 
will imply the solution of  traditional formulations of the inverse 
boundary value problem with hypotheses on Cauchy data or Dirichlet to Neumann maps.  

We  define boundary operators $\delta_{2j} u = (- \Delta)^j u | _{ \partial \Omega}$ and 
$\delta_{2j + 1} = \frac{\p}{\p \nu} (-\Delta)^j u$, both for $j =0, \dots, m-1$. By  
applying Green's identity to 
$B_0$ for $u, v \in C^\infty (\bar{\Omega})$ we obtain
\begin{equation} \label{eq:Green}
 B_0(u,v) - \int_\Omega [(-\Delta)^m u ] v = 
\sum_{j=0}^{m-1} \int_{\p \Omega} (-1)^j \delta_{j} v \delta_{2m - 1  - j}u \, d\sigma 
\end{equation}
If $u$ is a solution of an equation with principal part $ ( -\Delta )^m$, 
the vector of 
distributions on the boundary $(\delta _0 u, \dots, \delta _{ 2m-1} u)$ 
are the {\em Cauchy data }of the solution $u$. (Or at least one possible choice for the Cauchy data.) Here and throughout this section, we are  using the form  $B_0$ 
given in \eqref{eq:FormNavier}. The next Proposition
shows that we can define Cauchy data 
for a weak solution of $(-\Delta )^m u = F$. 
\begin{proposition}
\label{prop:CauchyDef}
We assume $ \partial \Omega $ is smooth. 
Suppose $u \in W^{m,2}(\Omega)$ and is a weak solution of 
$(-\Delta)^m u = F $ in  $ \Omega$ with 
$F \in \tilde{W}^{-m , 2}(\Omega)$. Then  we may define  the 
Cauchy data for $u$, 
$\delta_j u \in W^{m - \frac{1}{2} - j, 2}(\p \Omega) $ for $j = 0, \dots , 2m-1$.  
\end{proposition}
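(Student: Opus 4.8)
The plan is to treat the Cauchy data in two halves. The ``lower order'' components $\delta_0 u,\dots,\delta_{m-1}u$ are obtained directly from trace theorems: writing $j=2k$ (resp.\ $j=2k+1$) with $j\le m-1$, the hypothesis $u\in W^{m,2}(\Omega)$ gives $(-\Delta)^k u\in W^{m-2k,2}(\Omega)$ with $m-2k\ge 1$ (resp.\ $m-2k\ge 2$), so the trace theorem on the smooth boundary $\partial\Omega$ produces $\delta_{2k}u=(-\Delta)^k u|_{\partial\Omega}\in W^{m-2k-1/2,2}(\partial\Omega)=W^{m-1/2-j,2}(\partial\Omega)$ and $\delta_{2k+1}u=(\partial/\partial\nu)(-\Delta)^k u\in W^{m-2k-3/2,2}(\partial\Omega)=W^{m-1/2-j,2}(\partial\Omega)$, as required.

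For the ``higher order'' components $\delta_m u,\dots,\delta_{2m-1}u$ I would use the weak formulation together with \eqref{eq:Green} to define them by duality. Consider the functional $T v = B_0(u,v)-\langle F,v\rangle$ on $W^{m,2}(\Omega)$; it is bounded because $B_0$ is continuous on $W^{m,2}(\Omega)\times W^{m,2}(\Omega)$ and because, by \eqref{eq:SobDual}, $\tilde W^{-m,2}(\Omega)=W^{m,2}(\Omega)^*$, so $\langle F,v\rangle$ is unambiguously defined for $v\in W^{m,2}(\Omega)$ (the density of $C_0^\infty(\Omega)$ in $\tilde W^{-m,2}(\Omega)$ makes the value independent of the chosen extension of $v$). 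Since $u$ is a weak solution, $Tv=0$ for $v\in W^{m,2}_0(\Omega)$. Now invoke the higher order trace theorem on the smooth domain $\Omega$: the map $\Gamma v=(\delta_0 v,\dots,\delta_{m-1}v)$ is a bounded surjection of $W^{m,2}(\Omega)$ onto $\prod_{\ell=0}^{m-1}W^{m-1/2-\ell,2}(\partial\Omega)$ with a bounded linear right inverse $E$ and with kernel $W^{m,2}_0(\Omega)$; this holds because the $\delta_\ell$, built from powers of $-\Delta$, differ from the pure normal derivative traces $(\partial/\partial\nu)^\ell$ by lower order tangential operators, so $\Gamma$ is obtained from the classical normal derivative trace by an invertible triangular change of unknowns on the product of boundary spaces. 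Then $\tilde T:=T\circ E$ is bounded on $\prod_{\ell=0}^{m-1}W^{m-1/2-\ell,2}(\partial\Omega)$, and since $\Gamma E=\mathrm{id}$ and $\ker\Gamma\subset\ker T$ one checks $\tilde T\circ\Gamma=T$. Representing $\tilde T$ through the duality of $\prod_\ell W^{m-1/2-\ell,2}(\partial\Omega)$ with $\prod_\ell W^{\ell-m+1/2,2}(\partial\Omega)$ yields a tuple $(g_0,\dots,g_{m-1})$ with $Tv=\sum_{\ell=0}^{m-1}\langle g_\ell,\delta_\ell v\rangle_{\partial\Omega}$, and I would then \emph{define} $\delta_{2m-1-\ell}u:=(-1)^\ell g_\ell\in W^{\ell-m+1/2,2}(\partial\Omega)$. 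Setting $j=2m-1-\ell$, this reads $\delta_j u\in W^{m-1/2-j,2}(\partial\Omega)$ for $j=m,\dots,2m-1$, completing the list. Finally, for $u\in C^\infty(\bar\Omega)$, \eqref{eq:Green} identifies $Tv$ with $\sum_\ell(-1)^\ell\int_{\partial\Omega}\delta_\ell v\,\delta_{2m-1-\ell}u\,d\sigma$ written with the classical boundary values, so by uniqueness of the representing tuple the new definition agrees with the classical one.

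I expect the main obstacle to be the functional-analytic bookkeeping rather than any single estimate: one must pin down the correct version of the higher order trace theorem (surjectivity onto the product of boundary spaces, existence of a bounded right inverse, and the identification $\ker\Gamma=W^{m,2}_0(\Omega)$, all of which genuinely use $\partial\Omega$ smooth) and match the dual exponents, the key accounting being that $(\ell-m+1/2)+(m-1/2-\ell)=0$ so that $\delta_{2m-1-\ell}u$ lands in exactly $W^{m-1/2-(2m-1-\ell),2}(\partial\Omega)$. A secondary point that requires care is the unambiguous meaning and continuity of $v\mapsto\langle F,v\rangle$ on $W^{m,2}(\Omega)$, which is precisely \eqref{eq:SobDual} together with the density statement recorded after the definition of $\tilde W^{s,p}(\Omega)$.
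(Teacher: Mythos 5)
Your proposal is correct and follows the same overall strategy as the paper: the components $\delta_j u$ for $j\le m-1$ come from the trace theorem, and the remaining ones are defined by duality from the functional $v\mapsto B_0(u,v)-\langle F,v\rangle$, which is well defined by \eqref{eq:SobDual} and vanishes on $W^{m,2}_0(\Omega)$ because $u$ is a weak solution. The one point where you diverge is the choice of right inverse for the boundary map $\Gamma v=(\delta_0 v,\dots,\delta_{m-1}v)$: the paper takes $E\phi$ to be the solution of the homogeneous polyharmonic Dirichlet problem with data $\phi$ (citing Gazzola, Grunau and Sweers), whereas you use a general bounded extension operator from the higher-order trace theorem, together with the identification $\ker\Gamma=W^{m,2}_0(\Omega)$ and the triangular relation between $(\delta_0,\dots,\delta_{m-1})$ and the pure normal-derivative traces. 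Since any two right inverses of $\Gamma$ differ by an element of $\ker\Gamma\subset\ker T$, both constructions produce the same functional on the product of boundary spaces and hence the same Cauchy data; your version is marginally more elementary in that it does not invoke solvability of the polyharmonic Dirichlet problem, at the cost of somewhat more functional-analytic bookkeeping, and your closing consistency check against \eqref{eq:Green} for smooth $u$ matches the remark following the paper's proof.
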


\begin{proof}
For $j = 0, \dots , m-1$ the trace theorem gives us 
$\delta_j u \in W^{m - \frac{1}{2} - j, 2}(\p
\Omega)$. Next, for 
$\phi_j \in W^{m - \frac{1}{2} - j, 2}(\p \Omega)$ 
for $j = 0, \dots, m-1$ we can solve the Dirichlet
problem
\[ \begin{cases} 
(-\Delta)^m v = 0 & \text{ in } \Omega \\
\delta_j v = \phi_j & \text{ on } \p \Omega . 
\end{cases} \]
 See, for example, the monograph of 
 Gazzola {\em et.~al.~}\cite[Theorem 2.14]{MR2667016}.
We use the identity below to define $ \delta _j u$ for $j=m, \dots, 2m-1$:  
\[ B_0(u,v) - \langle F , v \rangle 
 = \sum_{j=0}^{m-1} (-1)^j \langle  \delta_{2m - 1 - j} u , \delta _j v
 \rangle_{\partial \Omega} .\]
Here we are  using $ \langle \cdot , \cdot \rangle _{ \partial \Omega}$ to 
denote the pairing of
duality for Sobolev spaces on the boundary. 
Note that the lefthand side of this identity is defined due to our choice of the form 
$B_0$ and our assumption that $ F $ lies in the dual space to 
$W^ { m,2} ( \Omega)$, $ \tilde W ^ { -m,2} ( \Omega) $. 
\end{proof}

\begin{remark}  1. We  note that the identity \eqref{eq:Green} allows us to show that for a 
smooth 
solution, the weak definition of the Cauchy data in Proposition \ref{prop:CauchyDef} 
agrees with the classical 
definition of the operators $ \delta _j u $. 

2. 
Our  discussion of  the Cauchy data requires us to  assume that the boundary is smooth. 
For example, even to define Sobolev spaces of order $k$ on the boundary, 
we would normally need
the boundary to be $C^k$.  In addition, the existence result for
the Dirichlet problem for the polyharmonic operator that we quote 
also assumes the boundary is smooth. 

3. It is possible that the work of  Verchota  on boundary value problems
for polyharmonic operators on Lipschitz domains \cite{GV:1990} can be used to provide a formulation of
the Cauchy data for solutions  on Lipschitz domains. However, this would take us far
afield from the themes we are studying here and we have not pursued this direction. 
\end{remark}

For a weak solution of $Lu =0 $  with $L$ as defined in \eqref{eq:Poly} 
and coefficients  satisfying \eqref{eq:Qprop},  
we have that $qu$ and $Q\cdot Du$ lie in
$\tilde W ^ { -m,2} ( \Omega)$ (see \eqref{eq:trilinear}), thus 
%%Make clear the assumption on Q and q taht is being used here. 
Proposition \ref{prop:CauchyDef} implies the 
existence of the Cauchy data for $u$. 

We now consider two operators  $L_j= (-\Delta )^m u + Q^j \cdot Du +q^ju$ 
for $j =1,2$ with $Q^j$ and $q^j$ satisfying the conditions
  \eqref{eq:Qprop} and let 
$ B_j(u,v) = B_0(u,v) + \langle Q^j\cdot Du, v \rangle + \langle q^ju,v\rangle  $ 
be the form 
used to define weak solutions of the operator $L_j$. 
We say $B_1 = B_2$ if for each $u_1 \in 
W^{ m,2} (\Omega)$ a solution of $L_1 u_1 = 0$, there is a 
solution $u_2$ of $L_2 u_2 = 0$ with $u_1 - u_2 \in W^{m, 2}_0(\Omega)$ and 
$B_1(u_1,v) = B_2(u_2,v)$ for all $v \in W^{m, 2}(\Omega)$. We also assume
that we have the same result with the roles of $L_1$ and $L_2$ reversed. 

Now, suppose that $B_1 = B_2$ and $u_1$ is a solution to $L_1 u_1 = 0$ and $v_2$ a 
solution of $L_2^t v_2 = 0$. If there is a solution of $L_2 u_2 = 0$ with 
$u_1 - u_2 \in W_0^{m,2}(\Omega)$ then
\[ B_1(u_1 , v_2) = B_2(u_2 , v_2) \]
and since $v_2$ is a solution of $L_2^t v_2=0$, we have
\[ B_2(u_1 - u_2 , v_2) = 0 . \]
Thus, we can conclude
\begin{equation} 
\label{eq:equality}
B_1(u_1, v_2) = B_2(u_1 , v_2) .
\end{equation}
Next we say that two operators $L_1$ and $L_2$ have the same Cauchy 
data if 
whenever $u_1 \in W^ { m,2} ( \Omega) $ is a weak solution of  $L_1u_1=0$, 
there exists $ u_2 \in W^ { m,2} ( \Omega)$ so that   $L_2 u _2 =0$
and $\delta _j u _1 = \delta _j u _2$, $ j =0, \dots, 2m-1$. 
We observe that the condition
$ \delta _j u_1= \delta _j u _2$, $j=0, \dots, m-1$
implies $ u_1-u_2 \in W^ { m,2}_0 ( \Omega) $ and then
the definition of the Cauchy data using the form implies 
$$ B_0(u_1, v) + \langle (Q^1 \cdot D+q ^1) u_1, v \rangle 
= B_0(u_2, v) + \langle ( Q^2\cdot D + q^2) u _2, v \rangle 
$$
and thus we have  that if the Cauchy data for the two operators agree, then the forms  are equal.  This argument may be reversed so that we 
have that the 
equality of the forms is equivalent to the equality of the Cauchy data.  

Finally, in the case that we have unique solvability of a boundary 
value problem, we may view the linear space of Cauchy data as the graph of
a Dirichlet to Neumann type map. For example,  if we have unique 
solvability for the Navier 
boundary value problem \eqref{eq:navier} with data  
$( \phi_0, \phi_2, \dots, \phi_{2m-2})
\in \oplus_ { j=0} ^ { m-1} W^ { m -1/2 - 2j, 2 } ( \partial \Omega)$, then we may 
define $ \Lambda : 
 \oplus_ { j=0} ^ { m-1} W^ { m -1/2 - 2j, 2 } ( \partial \Omega) \rightarrow
  \oplus_ { j=0} ^ { m-1} W^ { m -3/2 - 2j, 2 } ( \partial \Omega)$
by 
$$ \Lambda ( \phi_0, \dots, \phi_{2m-2} ) = ( \frac \partial { \partial \nu } u, 
\frac \partial { \partial \nu } (-\Delta u) ,
\dots ,
\frac \partial { \partial \nu } (-\Delta )^ { m-1} u )
$$
where $u$ is the solution to \eqref{eq:navier} with data 
$ ( \phi_0, \dots, \phi _ { 2m-2})$.
It is clear that under the assumption of the unique solvability of the Navier boundary 
value problem, the equality of the Dirichlet to Neumann  maps  for two different operators implies
the two operators have the same Cauchy data.   One may also define Dirichlet to Neumann 
maps for other boundary value problems such as the Dirichlet problem. We leave it to
the interested reader to write down the details connecting these maps to  Cauchy data and
then the bilinear form.

We will want to work on a simply connected domain in order to recover $Q$. To do so, 
let $B$ be a ball such that $\Omega \subset \subset B$. Let 
$L_j = (-\Delta)^m + Q^j \cdot D + q^j$ in $\Omega$ and an operator 
$\tilde{L}_j$ in $B$. We define the forms of the operators as before and 
we have $\tilde{B}_j = B_j + B'$ where 
$B'$ is the form $B_0$, with domain  $ W^{m,2}(B \setminus \bar\Omega) 
\times W^{m,2}(B \setminus \bar\Omega) $.

\begin{lemma} \label{lm:Extend to Ball}
If $B_1 = B_2$ then $\tilde{B}_1 = \tilde{B}_2$.
\end{lemma}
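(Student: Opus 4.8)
The plan is to take a solution $u_1$ of $\tilde L_1 u_1 = 0$ in $B$, restrict it to $\Omega$, use the hypothesis $B_1 = B_2$ to produce a solution $u_2$ of $L_2 u_2 = 0$ in $\Omega$ that agrees with $u_1$ to order $m-1$ on $\p\Omega$, and then glue $u_2$ on $\Omega$ to $u_1$ on $B\setminus\bar\Omega$ to obtain a solution of $\tilde L_2$ in $B$. The matching of Cauchy data is exactly what lets the glued function be a weak solution across $\p\Omega$, and the additivity $\tilde B_j = B_j + B'$ with $B'$ supported on $B\setminus\bar\Omega$ is what makes the forms match.

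\medskip

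\noindent\emph{Step 1.} Let $u_1 \in W^{m,2}(B)$ solve $\tilde L_1 u_1 = 0$ in $B$. Its restriction $u_1|_\Omega$ lies in $W^{m,2}(\Omega)$ and, since $\tilde L_1 = L_1$ on $\Omega$, it is a weak solution of $L_1 (u_1|_\Omega) = 0$ in $\Omega$. By the hypothesis $B_1 = B_2$ there is a solution $u_2 \in W^{m,2}(\Omega)$ of $L_2 u_2 = 0$ in $\Omega$ with $u_1|_\Omega - u_2 \in W^{m,2}_0(\Omega)$ and $B_1(u_1|_\Omega, v) = B_2(u_2, v)$ for all $v \in W^{m,2}(\Omega)$.

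\medskip

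\noindent\emph{Step 2.} Define $w$ on $B$ by $w = u_2$ on $\Omega$ and $w = u_1$ on $B\setminus\bar\Omega$. Because $u_1|_\Omega - u_2 \in W^{m,2}_0(\Omega)$, the two pieces have matching traces of all orders up to $m-1$ on $\p\Omega$, so $w \in W^{m,2}(B)$ (the extension by zero of $u_1|_\Omega - u_2$ from $\Omega$ to $B$ lies in $W^{m,2}(B)$, hence $w = u_1 - (\text{that extension})$ is in $W^{m,2}(B)$). Note $Q^2, q^2$ are defined on $\Omega$ and the tilde operator $\tilde L_2$ uses these same coefficients on $\Omega$ and the unperturbed $(-\Delta)^m$ on $B\setminus\bar\Omega$; the product estimate \eqref{eq:trilinear} guarantees $Q^2\cdot Dw$ and $q^2 w$ make sense in $\tilde W^{-m,2}(B)$.

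\medskip

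\noindent\emph{Step 3.} Verify that $w$ is a weak solution of $\tilde L_2 w = 0$ in $B$. Take any test function $v \in W^{m,2}_0(B)$; then $v|_\Omega \in W^{m,2}(\Omega)$ and $v|_{B\setminus\bar\Omega} \in W^{m,2}(B\setminus\bar\Omega)$. Splitting $\tilde B_2(w,v) = B_2(w|_\Omega, v|_\Omega) + B'(w|_{B\setminus\bar\Omega}, v|_{B\setminus\bar\Omega})$ (using the additivity of the form stated before the lemma), the first term vanishes because $u_2$ solves $L_2 u_2 = 0$ in $\Omega$ (and $v|_\Omega$, while not compactly supported in $\Omega$, can be handled by the fact that $B_2(w|_\Omega,\cdot) = B_1(u_1|_\Omega,\cdot)$ from Step 1 together with the solution property of $u_1$ in $B$); the second term combines with $\tilde B_1(u_1, v)$ restricted to $B\setminus\bar\Omega$. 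Carefully bookkeeping: $\tilde B_2(w,v) = B_2(u_2, v|_\Omega) + B'(u_1, v|_{B\setminus\bar\Omega}) = B_1(u_1|_\Omega, v|_\Omega) + B'(u_1, v|_{B\setminus\bar\Omega}) = \tilde B_1(u_1, v) = 0$. Thus $w$ solves $\tilde L_2 w = 0$ in $B$, and by the same identity $\tilde B_1(u_1, v) = \tilde B_2(w, v)$ for all $v \in W^{m,2}(B)$. Reversing the roles of $L_1$ and $L_2$ (using the symmetric part of the hypothesis $B_1 = B_2$) gives the full statement $\tilde B_1 = \tilde B_2$.

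\medskip

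\noindent\textbf{Main obstacle.} The delicate point is Step 3: the test functions for $\tilde B_j$ on $B$ need not be compactly supported in $\Omega$ when restricted there, so one cannot directly invoke ``$u_2$ is a weak solution of $L_2 u_2 = 0$'' (which only tests against $W^{m,2}_0(\Omega)$). The resolution is to route everything through the form equality $B_2(u_2, \cdot) = B_1(u_1|_\Omega, \cdot)$ on all of $W^{m,2}(\Omega)$ — this is precisely the extra information carried by the hypothesis $B_1 = B_2$ beyond mere equality of Cauchy data — and then use that $u_1$ is genuinely a solution in the larger domain $B$. Checking that the form really does split additively across $\p\Omega$ for $W^{m,2}(B)$ functions, and that the boundary contributions from Green's identity \eqref{eq:Green} cancel because the Cauchy data of the two pieces agree, is the bookkeeping core of the argument.
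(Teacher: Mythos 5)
Your proposal is correct and follows essentially the same route as the paper: restrict the solution of $\tilde L_1$ to $\Omega$, invoke the hypothesis $B_1=B_2$ to obtain $u_2$ with $u_1|_\Omega - u_2 \in W^{m,2}_0(\Omega)$ and matching forms on all of $W^{m,2}(\Omega)$, glue, and use the additivity $\tilde B_j = B_j + B'$ to transfer the form identity to $B$. Your Steps 2 and 3 simply spell out details (membership of the glued function in $W^{m,2}(B)$, and the point that one must use the full form equality rather than just the weak-solution property of $u_2$) that the paper's shorter argument leaves implicit.
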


\begin{proof}
Let $\tilde{u}_1 \in W^{m,2}(B)$ such that $L_1 
\tilde{u}_1 = 0$ and  let $u_1 =
\tilde{u}_1|_{\Omega}$. Since $B_1 = B_2$ there 
exists $u_2 \in W^{m,2}(\Omega)$ such that
$u_1 - u_2 \in W^{m,2}_0(\Omega)$ and 
$L_2 u_2 = 0$. Then, for any $v \in W^{m.2}_0(B)$
\[ \tilde{B}_1(\tilde{u}_1 , v) = B_1(u_1 , v) + B'(\tilde{u}_1 , v ) 
= B_2(u_2 , v) + B'(\tilde{u}_1 , v ) = \tilde{B}_2(\tilde{u}_2 , v) \]
where
\[ \tilde{u}_2 = \begin{cases}
\tilde{u}_1 & \text{ in } B \setminus \bar{\Omega} \\
u_2 & \text{ in } \bar{\Omega} . 
\end{cases} \]
It follows that if $\tilde{L}_1 \tilde{u}_1 = 0$ then
$\tilde{L}_2 \tilde{u}_2 = 0$. We have shown that if 
$\tilde{u}_1 \in W^{m,2}(B)$ is a solution to $\tilde{L}_1 \tilde{u}_1 = 0$ then 
there exists $\tilde{u}_2 \in W^{m, 2}(B)$ with $\tilde{L}_2 \tilde{u}_2 = 0$ 
and $\tilde{u}_1 - \tilde{u}_2 \in W^{m,2}_0(B)$ since $\tilde{u}_1 - \tilde{u}_2$ 
is compactly supported in $B$. Reversing the roles of 1 and 2, it follows 
that $\tilde{B}_1 = \tilde{B}_2$.
\end{proof}
\section{Main Result}
\label{sec:Main}
In this section, we give a construction of CGO solutions. We begin with a simple 
result that may be viewed as an analog of a theorem of P\"aiv\"arinta 
{\em et.~al.~}\cite[Theorem 1.1]{MR1993415} in the scale of the 
$X^ \lambda$ spaces. Our next step is to 
give an improvement of this result that relies on the averaging 
estimate from Section \ref{sec:avg}. We then prove our main result on the 
uniqueness of the coefficients.  
\begin{proposition}
\label{prop:simplecgo}
%% xyz added ref to \cal V
   Let $ \zeta \in {\mathcal V}$,    
suppose $q \in \tilde W^ { -s,p}( \Omega) $ and 
$ Q \in \tilde W ^ { 1-s, p} ( \Omega)$ with $s$ and $p$ satisfying 
$ 1/p + ( s-m)/d < 0$  and $ s < m$.
Suppose $ a \in C^ \infty ( \R^d)$ and 
$P(hD)^m a =0$. 
For $h$ small, we may find a solution  
$ \psi $ so that  $u = e ^ { x\cdot \zeta /h} (a+ \psi)$ is a 
CGO solution in $ \Omega$ and  with 
$X^ \lambda = X^\lambda _ {h \zeta}$ we have $ \psi \in X^ { m/2}$ and the estimate 
\begin{equation} 
\label{eq:cgoest1}
\|\psi \| _ { X^ { m/2}} \lesssim h^ { 2m } 
( \| Q\cdot Da \|_ { X^ { -m/2}} +h^{ -1}\|Qa\|_ { X^ { -m/2} } + \|aq \|_ { X^ {-m/2}}) . 
\end{equation}
If in addition  we have $p\geq 2$, 
then  $ \| \psi\|_ { X^ { m/2} } \lesssim h^{ 3m/2 -s}$.
\end{proposition}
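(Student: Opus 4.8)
The plan is to construct $\psi$ by solving the integral equation \eqref{eq:intfrm} with $f = -L_\zeta a = -(Q\cdot(\zeta/(ih)+D)a + qa)$ after stripping off the exponential factor, using the contraction mapping \eqref{eq:contraction} on $X^{m/2}$. First I would verify that $P(hD)^m a = 0$ forces the term $I_\vp^m (P(hD)^m a)$ to behave well; since $a$ is smooth but need not be compactly supported, I would instead work directly with the equation $L_\zeta \psi = -L_\zeta a$ where $L_\zeta a = h^{2m}(Q\cdot(\zeta/(ih)+D)a + qa)$ because the $P(hD)^m a$ contribution vanishes by hypothesis. The first substantive step is to decompose $q$ and $Q$ à la \eqref{eq:qma}: the condition $1/p + (s-m)/d < 0$ with $s < m$ is exactly what lets Proposition \ref{prop:Decomposition} (referenced but not yet stated in the excerpt --- I would invoke it, as the paper promises) write $q = \sum_{|\beta|\le m} D^\beta q_\beta$ and $Q = \sum_{|\beta|\le m-1} D^\beta Q_\beta$ with each $q_\beta, Q_\beta \in C^\theta(\R^d)$ supported in $\bar\Omega$ for some $\theta \in (0,1]$. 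With that decomposition in hand, Theorem \ref{th:qma} immediately gives, for $h \le h_0$, a solution $\psi \in X^{m/2}$ of $L_\zeta\psi = f$ in $\Omega$ with $\|\psi\|_{X^{m/2}} \lesssim \|f\|_{X^{-m/2}}$.

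Next I would bound $\|f\|_{X^{-m/2}} = \|L_\zeta a\|_{X^{-m/2}}$. Writing $L_\zeta a = h^{2m}\big(Q\cdot Da + \tfrac{1}{ih}(Q\cdot\zeta)a + qa\big)$ and noting $|\zeta| \lesssim 1$ for $\zeta \in \mathcal V$, the triangle inequality gives
\[
\|f\|_{X^{-m/2}} \lesssim h^{2m}\big(\|Q\cdot Da\|_{X^{-m/2}} + h^{-1}\|Qa\|_{X^{-m/2}} + \|qa\|_{X^{-m/2}}\big),
\]
which is exactly \eqref{eq:cgoest1}. Combining with the $X^{m/2}$ estimate from Theorem \ref{th:qma} proves the first assertion. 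For the stated distributional/weak-solution meaning of "CGO solution" I would cite Lemma \ref{lm:Inverse} and the embedding $X^{m/2} \subset W^{m,2}(\R^d)$ from \eqref{SSE2}, just as in the proof of Theorem \ref{th:qma}.

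For the final claim, assume additionally $p \ge 2$. The task is to show each of $\|Q\cdot Da\|_{X^{-m/2}}$, $h^{-1}\|Qa\|_{X^{-m/2}}$, $\|qa\|_{X^{-m/2}}$ is $\lesssim h^{-m/2 - s}$, so that multiplying by $h^{2m}$ yields $h^{3m/2 - s}$. Since $a$ is smooth with bounded derivatives, Proposition \ref{prop:aqbound} reduces each product to estimating $\|q\|_{X^{-m/2}}$ and $\|Q\|_{X^{-m/2}}$ up to the smooth factor $a$ (for $Q\cdot Da$ the factor $Da$ is again smooth and bounded, so the same applies; here I use $p \ge 2$ to embed $\tilde W^{-s,p}(\Omega)$ into $W^{-s,2}(\R^d)$, which requires support in $\bar\Omega$ and $p \ge 2$). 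Then \eqref{SSE3} of Proposition \ref{prop:Df} with $\lambda = m/2$ gives $\|q\|_{X^{-m/2}} \lesssim h^{-s - m/2}\|q\|_{W^{-s,2}(\R^d)} \lesssim h^{-s-m/2}\|q\|_{\tilde W^{-s,p}(\Omega)}$, valid because $0 \le s \le m = 2\lambda$ (using $s < m$). The same bound holds for $Q$ (which lies in $\tilde W^{1-s,p} \subset \tilde W^{-s,p}$), and the $h^{-1}\|Qa\|$ term is even better since $Q$ has one extra derivative of smoothness; in any case the crude bound $h^{-1}\|Qa\|_{X^{-m/2}} \lesssim h^{-1}\cdot h^{-s-m/2}$ is absorbed after multiplying by $h^{2m}$ as $h^{2m - 1 - s - m/2} = h^{3m/2 - s - 1} \le h^{3m/2-s}$ for $h \le 1$. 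Putting the pieces together gives $\|\psi\|_{X^{m/2}} \lesssim h^{3m/2 - s}$.

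The main obstacle I anticipate is the bookkeeping around the decomposition of the negative-order coefficients into Hölder pieces --- specifically making sure the parameter range $1/p + (s-m)/d < 0$, $s < m$ genuinely suffices for Proposition \ref{prop:Decomposition} and that the resulting Hölder exponent $\theta$ is positive --- together with the careful use of $p \ge 2$ to pass from $\tilde W^{-s,p}(\Omega)$ to $W^{-s,2}(\R^d)$ (this embedding needs the compact support, since $W^{-s,p} \hookrightarrow W^{-s,2}$ fails on all of $\R^d$ for $p > 2$). The estimates themselves are routine applications of Theorem \ref{th:qma}, Proposition \ref{prop:aqbound}, and \eqref{SSE3}; the delicate point is purely the interplay of the indices.
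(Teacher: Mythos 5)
Your plan follows the paper's proof almost exactly: decompose $q$ and $Q$ into derivatives of H\"older continuous pieces via Proposition \ref{prop:Decomposition} (the paper makes this precise by first using the Sobolev embedding to place $q$ in $\tilde W^{r-m,p_1}(\Omega)$ and $Q$ in $\tilde W^{1+r-m,p_1}(\Omega)$ with $1/p_1 - r/d<0$, so that Morrey's lemma makes the pieces H\"older continuous --- this is exactly the bookkeeping you flag at the end, and the hypothesis $1/p+(s-m)/d<0$, $s<m$ is what makes the choice of $r\in(0,1)$ and $p_1$ possible), then apply Theorem \ref{th:qma} to get $\|\psi\|_{X^{m/2}}\lesssim \|L_\zeta a\|_{X^{-m/2}}$, and finally estimate the right-hand side using Proposition \ref{prop:aqbound} and \eqref{SSE3}. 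The structure, the key lemmas, and the role of $p\geq 2$ are all as in the paper.

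There is, however, one genuine error in your treatment of the final estimate. The claim that the crude bound $h^{2m-1}\cdot h^{-s-m/2}=h^{3m/2-s-1}\leq h^{3m/2-s}$ for $h\leq 1$ is backwards: since $0<h\leq 1$, decreasing the exponent increases the power of $h$, so $h^{3m/2-s-1}\geq h^{3m/2-s}$ and the crude bound does \emph{not} suffice. The correct route is the one you mention only parenthetically: $Q$ lies in $\tilde W^{1-s,p}(\Omega)$, one derivative smoother than $q$, so applying \eqref{SSE3} with Sobolev index $s-1$ (when $s\geq 1$; when $s\leq 1$ the trivial bound $\|Q\|_{X^{-m/2}}\leq h^{-m/2}\|Q\|_{L^2}$ is already enough) gives $\|Qa\|_{X^{-m/2}}\lesssim h^{1-s-m/2}$, and this extra factor of $h$ exactly absorbs the $h^{-1}$ in \eqref{eq:cgoest1}, yielding $h^{2m}\cdot h^{-1}\cdot h^{1-s-m/2}=h^{3m/2-s}$. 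This is precisely why the hypothesis places $Q$ in $\tilde W^{1-s,p}(\Omega)$ rather than $\tilde W^{-s,p}(\Omega)$; the "in any case" fallback must be deleted and replaced by the sharper estimate.
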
 
%
%%xyz add reference to Bergh and Lofstrom, Theorem 6.5.1 here. 
%% \cite[Theorem 6.5.1]{BL:1976}
\begin{proof} By Sobolev embedding \cite[Theorem 6.5.1]{BL:1976}, 
we have $\tilde  W ^ { -s,p}  ( \Omega) 
\subset \tilde W ^ { - s_1, p_1}(\Omega) $ provided $ s_1 > s$,  
$p,p_1 \in ( 1, \infty)$ and 
$$ 
\frac 1 p + \frac s d  \leq \frac 1 { p_1} + \frac { s_1} { d} . 
$$
% (I put in these empty comments  to make it a little easier to see where
% the displayed equation begins and end.) 
Thus we may choose $p_1 \in ( 1, \infty)$ and $ r \in(0,1)$
so that $q \in\tilde  W^ { r-m,p_1} ( \Omega) $ and 
$Q \in\tilde  W^ { 1+r-m,p_1}  ( \Omega)$ and $ 1/p_1 - r/d < 0$. By 
Proposition  \ref{prop:Decomposition} the coefficients $q$ and $Q$
have a representation as in  \eqref{eq:qma}, the hypothesis of  Theorem \ref{th:qma}. 
This follows because Morrey's Lemma implies functions in $W^ { r,p_1}( \R ^d)$ 
are H\"older 
continuous. 
Thus we may use Theorem \ref{th:qma} to solve 
$ L _ \zeta \psi = - L _ \zeta a $ and  the solution will satisfy  
$  \| \psi \| _ { X^ { m/2} } \lesssim \| L_ \zeta a \| _ { X^ { -m/2}}.$
The first estimate \eqref{eq:cgoest1}  follows  from this.
To obtain the estimate in terms of $h$, we use \eqref{SSE3} and 
Proposition \ref{prop:aqbound} to estimate the $X^  {-m/2}$ 
norm in terms of the Sobolev norm. 
\end{proof} 
The next Lemma is rather technical but important. It provides a sequence of 
CGO solutions that
are instrumental in the proof of the main theorem. 
\begin{lemma} \label{lm:psi bound}
Suppose  $ 1/p +(s- m)/d <0$, $p \geq 2$, and $s < \frac{m}{2} + 1 $. 
%%ssume $\epsilon \in (0,1)$
Assume we have potentials $ q^k$, $Q^k$ for $k=1,2$ which satisfy  
$q^k \in \tilde W ^ { -s,p}( \Omega)$ and 
$Q^k \in  \tilde{W}^{-s + 1 ,p}(\Omega)$.  
Assume $a^k   \in C^ \infty ( \R^d)$ and $P(hD)^m a^k=0$. 
Given $ \xi_0 \in \reals ^d \setminus \{ 0 \} $ and 
unit vectors $ \mu_1$ and $\mu_2$  so that $\{ \xi _0, \mu _1 , \mu_2\}$ are mutually 
orthogonal, there exist sequences 
$\{ h_j\}$, $ \{\theta _j\}$   with 
$ \lim _ {j \rightarrow \infty } h _ j =0$ and
$ \lim _{ j \rightarrow\infty } \theta _j = \theta_0$ 
%%xyz We can mention that this \theta_0 will be used in Theorem 6.6, specifically 
%%  in the I term. 
%% This satisfies #8 in report_X
%% I interpreted comment X8 to be asking that we explain where \theta _0 comes from. 
%% I did this below.  
%% My general feeling is that it is less helfpul to mention where we will use a 
%%  detail like this
%% and more helpful to mention where the detail comes from when we use it. 
%%  This is not part of the big picture and does not need much discussion here. 
with the 
following properties. 
If  $ \zeta^k _j = \zeta^k( h_j , \theta_j)$  is   
as in \eqref{eq:rotdef} and $ X_{k,j}^ \lambda = X^ \lambda _ { h_j \zeta ^k_j}$, then 
we  may find    sequences 
$\{ \psi_j^k\} \subset   X^ { m /2} _ {k,j} $, $k=1,2$ so that
$e^ { x\cdot \zeta^k_j/h_j} ( a^k + \psi^k_j) $ is a  CGO solution to 
$ L_k u=0$ and
\begin{equation}
\label{eq:avq}
 \| q^\ell \| _ { X^ { -m/2} _{k,j} } + h_j^{-1} \| Q^\ell\|_{X_{k,j}^ { -m/2}} 
+h_j^ { -3/2}\| Q ^ \ell \| _{ X_{k,j}^ { (1-m)/2}  } \lesssim
h_j  ^ { -m/2 -s +1- \epsilon }, \qquad k,\ell = 1, 2 
\end{equation}
\begin{equation} \label{eq:psi bound}
    \norm{\psi_j^k}_{X_{k,j}^{m/2}} \lesssim h_j^{ \frac{3m}{2} -s + 1 - \epsilon  } .
\end{equation}
%%where $0 < \epsilon < \frac{2}{3}$ and the constant is dependent on $a$, $Q$, 
%%and $q$ but is independent of $h$. 
\end{lemma}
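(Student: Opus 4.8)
The plan is to run the averaging estimate of Theorem~\ref{th:Average} along the two‑parameter family $\zeta^k(\tau,\theta)$ introduced in \eqref{eq:rotdef}, to extract good values of $\tau$ and $\theta$ by a pigeonhole and compactness argument so as to obtain \eqref{eq:avq}, and then to feed \eqref{eq:avq} into the CGO construction to obtain \eqref{eq:psi bound}.

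\emph{Averaging.} Since $\Omega$ is bounded and $p\ge 2$, Hölder's inequality gives the embeddings $\tilde W^{-s,p}(\Omega)\hookrightarrow\tilde W^{-s,2}(\Omega)$ and $\tilde W^{-s+1,p}(\Omega)\hookrightarrow\tilde W^{-s+1,2}(\Omega)$, so I may regard each $q^\ell$ as an element of $W^{-s,2}(\R^d)$ and each component of $Q^\ell$ as an element of $W^{-s+1,2}(\R^d)$. Each $\zeta^k(\tau,\theta)$ lies in ${\cal V}$, so $e^{x\cdot\zeta^k(\tau,\theta)/\tau}$ is harmonic and $\tau$ serves as the small parameter. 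For $h$ small enough that $h\langle\xi_0\rangle^2\lesssim 1$ and $h\le 1/(4|\xi_0|)$, I would apply Theorem~\ref{th:Average} three times: with $\lambda=m/2$ and Sobolev order $s$ to $q^\ell$; with $\lambda=m/2$ and order $s-1$ to the components of $Q^\ell$; and with $\lambda=(m-1)/2$ and order $s-1$ to the components of $Q^\ell$. Here $\epsilon$ is chosen in the range for which the hypotheses of Theorem~\ref{th:Average} hold, which is nonempty once $s$ is taken close enough to $m/2+1$ — harmless, since enlarging $-s$ only weakens the hypothesis of Theorem~\ref{th:main}. A check of exponents then shows that each of the three $h$-weighted squared quantities in \eqref{eq:avq}, namely $\|q^\ell\|_{X^{-m/2}}^2$, $h^{-2}\|Q^\ell\|_{X^{-m/2}}^2$, and $h^{-3}\|Q^\ell\|_{X^{(1-m)/2}}^2$, has $(\tau,\theta)$-average over $[h,2h]\times[0,2\pi]$ bounded by $h^{-2(m/2+s-1+\epsilon)}$; summing over the finitely many $k,\ell\in\{1,2\}$, so is the sum of all these averages.

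\emph{Choosing the sequences.} Letting $h$ run through a sequence tending to $0$ inside the admissible range, for each such $h$ I would apply Chebyshev's inequality to the sum of averages from the previous step, first in $\theta$ and then in $\tau$, obtaining a set of angles of Lebesgue measure at least a fixed constant $c>0$ for which there is $\tau(\theta)\in[h,2h]$ making the \emph{unaveraged} sum of the quantities in \eqref{eq:avq} at $\zeta^k(\tau(\theta),\theta)$ at most $Ch^{-2(m/2+s-1+\epsilon)}$; since $\tau(\theta)\approx h$, one relabels $\tau(\theta)$ as the new scale. As these good-angle sets all have measure $\ge c$ and lie in the compact interval $[0,2\pi]$, a standard $\limsup$ argument produces a subsequence $h_j\downarrow 0$ and angles $\theta_j\to\theta_0$ with $\theta_j$ good for $h_j$; with $\zeta^k_j=\zeta^k(h_j,\theta_j)$ and $X^\lambda_{k,j}=X^\lambda_{h_j\zeta^k_j}$ this is precisely \eqref{eq:avq}.

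\emph{CGO solutions.} The hypotheses $1/p+(s-m)/d<0$ and $s<m/2+1\le m$ are those of Proposition~\ref{prop:simplecgo}, and $P(h_jD)^ma^k=0$ by assumption, so for $j$ large that proposition produces $\psi^k_j\in X^{m/2}_{k,j}$ for which $e^{x\cdot\zeta^k_j/h_j}(a^k+\psi^k_j)$ is a CGO solution of $L_k u=0$ in $\Omega$, and by \eqref{eq:cgoest1} the norm $\|\psi^k_j\|_{X^{m/2}_{k,j}}$ is at most a constant times $h_j^{2m}$ times $\|Q^k\cdot Da^k\|_{X^{-m/2}_{k,j}}+h_j^{-1}\|Q^ka^k\|_{X^{-m/2}_{k,j}}+\|a^kq^k\|_{X^{-m/2}_{k,j}}$. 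By Proposition~\ref{prop:aqbound}, multiplication by $a^k$ (cut off away from $\bar\Omega$) and its derivatives preserves $X^{-m/2}$, so this last sum is $\lesssim h_j^{-1}\|Q^k\|_{X^{-m/2}_{k,j}}+\|q^k\|_{X^{-m/2}_{k,j}}$; invoking \eqref{eq:avq} then gives $\|\psi^k_j\|_{X^{m/2}_{k,j}}\lesssim h_j^{2m}\cdot h_j^{-m/2-s+1-\epsilon}=h_j^{3m/2-s+1-\epsilon}$, which is \eqref{eq:psi bound}. I expect the main obstacle to be the middle step: the pigeonhole must be run on the sum of all the finitely many terms at once, so that a single pair $(h_j,\theta_j)$ controls every norm in \eqref{eq:avq}, and one needs the good-angle sets to keep uniformly positive measure as $h\to 0$ so that a convergent subsequence of angles can be chosen. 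A secondary point is the exponent bookkeeping in the averaging step: one must verify that the $h^{-1}$ and $h^{-3/2}$ weights in \eqref{eq:avq} are exactly what is needed to bring all three terms to the common exponent $-m/2-s+1-\epsilon$, and this is what pins down the admissible range of $\epsilon$ in terms of $s$.
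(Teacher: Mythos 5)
Your proposal is correct and follows essentially the same route as the paper: apply Theorem \ref{th:Average} to the coefficients, pigeonhole in $(\tau,\theta)$ and use compactness of $[0,2\pi]$ to extract the sequences $\{h_j\},\{\theta_j\}$, then combine \eqref{eq:avq} with Proposition \ref{prop:simplecgo} and Proposition \ref{prop:aqbound} to obtain \eqref{eq:psi bound}. The only (harmless) deviation is that you obtain the $X^{(1-m)/2}$ bound on $Q^\ell$ by a third direct application of Theorem \ref{th:Average} with $\lambda=(m-1)/2$, whereas the paper derives it from the $X^{-m/2}$ estimate via \eqref{SSE1}; your version is if anything more explicit, and your exponent bookkeeping checks out.
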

%
%% Note that we obtain two such $\zeta$'s to get the same estimate from 
%% the averaging argument. We will need and use both in proving our 
%% perturbations are unique.
%
\begin{proof}
We may use  Proposition \ref{prop:simplecgo}  to find solutions  $ \psi ^k$ 
for each $h $ and $ \theta$ which lie in the space 
$ X_k^ { m/2}$ where we use 
$X_k^ \lambda = X^ { \lambda} _{ h \zeta^k(h,\theta)}$. These solutions $ \psi^k$ 
 satisfy the
estimate \eqref{eq:cgoest1}. 
Using Proposition \ref{prop:aqbound}, we can 
bound the right hand side of
\eqref{eq:cgoest1} by $ h^{2m}(\|q^k\|_ { X_k^ { -m/2}} + h^ { -1} \| Q^k\|_ { X_k^{ -m/2}})$. 
If we sum the estimates of Theorem \ref{th:Average} for $k=1,2$, may find  
sequences  $\{h _j\} $ and $\{ \theta_j\} $  so that for $k,\ell=1,2$ we have the estimates in the 
$X^ { -m/2}_{k,j}$ norm in 
\eqref{eq:avq}. The estimate for the remaining term then follows from \eqref{SSE1}. 
%$ \| q^\ell \| _ { X^ { -m/2} _{k,j} } + h^{-3/2}\| Q ^ \ell \| _{ X^ { (1-m)/2} _{k,j} } \lesssim
%%h_j  ^ { -m/2 -s +1- \epsilon } $ 
After perhaps passing to a sub-sequence, 
we have that the sequence $ \theta _j$ will be convergent and we let $ \theta_0$ be the limit 
of this sequence. 
%%We let 
%%$ \zeta^k_j = \zeta ^k(h_j,\theta_j)$ and 
%%$X_{k,j}^ \lambda = X^ \lambda_{ h_j \zeta ^k_j}$
%% Using \eqref{SSE1}, we can also conclude 
%%
%%
The estimate \eqref{eq:psi bound} follows from \eqref{eq:avq} and \eqref{eq:cgoest1}. 
\end{proof}
%%%%%%%%%%%%%
\begin{comment} 
$ \| L_\zeta a \|  \lesssim \| q^k\|_{ X^ { -m/2}} + h^ { -1} \| Q^k \|_ { X^?}$. 
Applying the averaging result Theorem \ref{th:Average}, we have m
The existence is a direct result of Proposition \ref{prop:contraction}. 
To show the bound, we are left with bounding
\[  h^{2m} \Big( \norm{ I_\vp^m( Q \cdot(\frac{\zeta}{ih} + D ) a) }_{X^{m/2}} 
  + \norm{ I_\vp^m(qa) }_{X^{m/2}} \Big) \]
%
By Proposition \ref{prop:aqbound} and Corollary \ref{cor:Iphi} we are left with bounding
%
\[  h^{2m} \Big( \frac{1}{h} \norm{Q}_{X^{-m/2}} + \norm{ q }_{X^{-m/2}} \Big) \]
%
By Theorem \ref{th:Average} we have that for $0 < \epsilon < \frac{1}{3}$ there 
exists a $\zeta$ such that
%
\[ \norm{Q}_{X^{-m/2}} \lesssim h^{-s - \frac{m}{2} + 2 - \epsilon} \]
%
\[ \norm{q}_{X^{-m/2}} \lesssim h^{-s - \frac{m}{2} + 1 - \epsilon} \]
%
where the inequality is dependent on $q_\alpha \in \widetilde{W}^{r , p}(\Omega)$ 
and $Q_\beta \in \widetilde{W}^{r , p}(\Omega)$. There are a finite number of these 
quantities and are independent of $h$ so we drop the terms. Therefore, we have that
%
\[ \norm{\psi}_{X^{m/2}} \lesssim h^{ \frac{3m}{2} + 1 - \epsilon - s } \]
\end{comment} 
%%%%%%
We are now ready to give the proof of our result establishing that the bilinear 
form for the operator $L$ uniquely determines the coefficients $q$ and $Q$. 
\begin{theorem}\label{th:Unique}
Suppose that we have two operators $ L_k$, $k=1,2$  as in \eqref{eq:Poly} and we have
$s < {m}/{2} + 1$ and $p\geq 2$ which satisfy  $ 1/p + (s-m) /d <0$, $p \geq 2$.
Suppose  the coefficients of the operators $L_k$ satisfy 
$Q^k \in \tilde{W}^{-s + 1 , p}(\Omega)$ 
and $q^k \in \tilde{W}^{-s , p}(\Omega)$. 
If the bilinear forms for the operators satisfy  $B_1=B_2$, 
then   $Q^1 = Q^2$ and $q^1 = q^2$.
\end{theorem}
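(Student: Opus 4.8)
The plan is to feed the CGO solutions from Lemma \ref{lm:psi bound} into the identity \eqref{eq:equality} and extract information about the difference of the coefficients by letting $h \to 0$. First I would reduce to proving $Q^1 = Q^2$ and $q^1 = q^2$ by showing that the pairing $\langle (Q^1 - Q^2) \cdot D + (q^1 - q^2), \cdot \rangle$ against suitable test functions vanishes; since these distributions lie in $\tilde W^{-s+1,p}(\Omega)$ and $\tilde W^{-s,p}(\Omega)$ respectively, and since (by Lemma \ref{lm:Extend to Ball}) we may assume $\Omega$ is replaced by a ball so that the domain is simply connected, it suffices to show that the Fourier transforms of these differences vanish. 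Fix $\xi_0 \in \reals^d \setminus \{0\}$ and pick orthonormal $\mu_1, \mu_2$ orthogonal to $\xi_0$. I would take the CGO solutions $u_1 = e^{x \cdot \zeta^1_j / h_j}(a^1 + \psi^1_j)$ for $L_1$ and $v_2 = e^{x \cdot \zeta^2_j / h_j}(a^2 + \psi^2_j)$ for $L_2^t$, where $\zeta^1_j + \zeta^2_j = -i\tau_j \xi_0$ (as arranged by \eqref{eq:rotdef}), so that the product of the exponentials is $e^{-i x \cdot \xi_0}$ up to the $\tau_j$-dependent scaling; the amplitudes $a^1, a^2$ are chosen as fixed (e.g. $a^1 = a^2 = 1$, which solves $P(hD)^m a = 0$).

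The core computation is to expand the difference of forms. Writing $L_1 - L_2 = (Q^1 - Q^2) \cdot D + (q^1 - q^2)$ and using \eqref{eq:equality} together with the fact that $u_1$ solves $L_1 u_1 = 0$ and $v_2$ solves $L_2^t v_2 = 0$, I would obtain an identity of the schematic form
\[
\langle (Q^1 - Q^2) \cdot D u_1, v_2 \rangle + \langle (q^1 - q^2) u_1, v_2 \rangle = 0.
\]
Substituting the CGO forms and dividing by the appropriate power of $h_j$ coming from the $\zeta/(ih)$ factor in $Du_1 = e^{x\cdot\zeta/h}((\zeta/h) a^1 \cdot(\dots) + \dots)$, the leading term should be $\langle (Q^1 - Q^2) \cdot \xi_0 \, e^{-ix\cdot\xi_0}, a^1 a^2 \rangle$ up to a scalar, i.e. essentially $\widehat{(Q^1-Q^2) \cdot \xi_0}(\xi_0)$, plus (a) a $q$-term which is lower order in $h$ after the rescaling and (b) remainder terms involving at least one factor of $\psi^k_j$. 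The remainders are estimated by the trilinear bound \eqref{eq:trilinear2} / \eqref{eq:map} from Proposition \ref{prop:newcon}, controlling $\| q^\ell\|_{X^{-m/2}}$, $\|Q^\ell\|_{X^{(1-m)/2}}$ and $\|Q^\ell\|_{X^{-m/2}}$ against $\|\psi^k_j\|_{X^{m/2}}$ and $\|a^k\|_{X^{m/2}}$ (the latter needs care since $a \equiv 1$ is not Schwartz, but one can localize with the cutoff $\vp$ or work on the ball). Plugging in the decay rates \eqref{eq:avq} and \eqref{eq:psi bound}, the dominant remainder behaves like $h_j^{(3m/2 - s + 1 - \epsilon) + (-m/2 - s + 1 - \epsilon)} = h_j^{m - 2s + 2 - 2\epsilon}$ against the main term, and the condition $s < m/2 + 1$ together with a small enough $\epsilon$ makes this exponent positive — so the remainder vanishes as $j \to \infty$. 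This yields $\widehat{(Q^1 - Q^2)\cdot\xi_0}(\xi_0) = 0$; since $\xi_0$ and the orthogonal plane were arbitrary, and since we may also vary which component of $Q$ is tested by rotating $\mu_1, \mu_2$ within $\xi_0^\perp$ and using that $\xi_0$ ranges over all of $\reals^d\setminus\{0\}$, one concludes $\widehat{Q^1 - Q^2} \equiv 0$, hence $Q^1 = Q^2$. Feeding $Q^1 = Q^2$ back into the identity kills the $Q$-term entirely, and the same limiting argument applied to $\langle(q^1 - q^2)u_1, v_2\rangle = 0$ gives $\widehat{q^1 - q^2}(\xi_0) = 0$ for all $\xi_0$, hence $q^1 = q^2$.

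The main obstacle I anticipate is the bookkeeping of the powers of $h_j$: one must carefully track the factor $h^{2m}$ absorbed by $I_\vp^m$, the factors $\zeta/(ih)$ appearing whenever $D$ hits the exponential, and the precise space ($X^{-m/2}$ vs. $X^{(1-m)/2}$) in which each coefficient is measured, so that the gain from \eqref{eq:psi bound} genuinely beats the loss from \eqref{eq:avq}. A secondary technical point is the non-Schwartz amplitude $a \equiv 1$: either one replaces it by $\vp$ (which equals $1$ on $\bar\Omega$ and is compactly supported) and checks $P(hD)^m \vp$ is harmless inside $\Omega$, or one notes the pairings only involve the coefficients supported in $\bar\Omega$ so that the value of $a$ outside is irrelevant. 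Finally, recovering all components of $Q$ (not just $Q \cdot \xi_0$) is what forces the use of a simply connected domain and the full range of $\xi_0$: from $\widehat{Q\cdot\xi_0}(\xi_0) = 0$ for all $\xi_0$ one gets that $Q$ is (the Fourier side of) a gradient, and the extension to the ball plus taking $\xi_0$ in a neighborhood lets one conclude $Q$ itself vanishes — this is the step where the structure of the first-order term, rather than just a scalar potential, genuinely enters.
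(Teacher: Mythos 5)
Your overall strategy is the one the paper uses: pair a CGO solution of $L_1$ against one of $L_2^t$ built from Lemma \ref{lm:psi bound}, multiply the difference of forms by $ih_j$, and let $j\to\infty$ so that the averaged estimates \eqref{eq:avq}--\eqref{eq:psi bound} kill every term containing a $\psi^k_j$ (your exponent $h_j^{m-2s+2-2\epsilon}$ is exactly the paper's, and the treatment of the $q$-term once $Q^1=Q^2$ is also the same). However, there are two problems with the recovery of $Q$. First, the surviving term is \emph{not} $\langle (Q^1-Q^2)\cdot\xi_0, a^1a^2e^{-ix\cdot\xi_0}\rangle$: the factor $\zeta^1_j/(ih_j)$ produced by $D$ hitting the exponential contributes $\zeta^1_j\to e^{i\theta_0}(\mu_1+i\mu_2)$, which is \emph{orthogonal} to $\xi_0$ by construction \eqref{eq:rotdef}. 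So with constant amplitudes what you actually recover, after also replacing $\mu_2$ by $-\mu_2$ and varying the frame, is $\mu\cdot(\hat Q^1(\xi_0)-\hat Q^2(\xi_0))=0$ for every $\mu\perp\xi_0$, i.e.\ $\curl(Q^1-Q^2)=0$ --- the component of $\hat Q^1-\hat Q^2$ \emph{parallel} to $\xi_0$ is invisible to this computation, no matter how you rotate $\mu_1,\mu_2$ or vary $\xi_0$.

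This is where the genuine gap lies: from $\curl(Q^1-Q^2)=0$ on the ball one only gets $Q^1-Q^2=Dg$ with $g$ supported in $\bar\Omega$ (Proposition \ref{prop:Poincare Lemma}), and ``taking $\xi_0$ in a neighborhood'' does not upgrade this to $g=0$; a compactly supported gradient is perfectly consistent with everything you have proved so far. The paper closes this by exploiting the amplitude freedom you set aside: rerun the limit \eqref{eq:identity} with $a^1=1$ and the \emph{linear} amplitude $a^2=(\mu_1-i\mu_2)\cdot x/2$ (which still satisfies $P(hD)^m a^2=0$), substitute $Q^1-Q^2=Dg$, and integrate by parts; since $(\mu_1+i\mu_2)\cdot Da^2=-i$ and $(\mu_1+i\mu_2)\cdot\xi_0=0$, the boundary-free integration by parts leaves exactly $i\langle g, e^{-ix\cdot\xi_0}\rangle=0$, i.e.\ $\hat g(\xi_0)=0$ for all $\xi_0$, hence $g=0$. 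Without this second choice of amplitude (or an equivalent device) your argument proves only that $Q^1-Q^2$ is a compactly supported gradient, not that $Q^1=Q^2$.
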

\begin{proof}
We begin by
 applying Lemma \ref{lm:Extend to Ball} to assume 
 $ \Omega $ is a ball.
Since the Sobolev spaces $\tilde W ^ {- s,2} ( \Omega)  $ 
are increasing as $-s$ decreases, we
may assume that  $s > m/2$ and choose $\epsilon $ so that 
 $s = \frac{m}{2} + 1 - \frac{3}{2} \epsilon$ for 
 $0 < \epsilon < \frac{1}{3}$.  
%% To do so, let us use our averaging argument. 
Let 
 $\xi_0 \in \R^d$ and  find unit vectors 
 $\mu_1 , \mu_2 \in \R^d$ such that $\{ \xi_0 , \mu_1 , \mu_2 \}$
 are mutually orthogonal. Given  amplitudes $ a^k$ which satisfy $P(hD)^ma^k=0$. 
  We 
 apply  Lemma \ref{lm:psi bound} to the operators $L_1$ and
 $L _2^t$ to find a sequence of solutions  $ \{ \psi _j^k \} $,   a  
 sequence of spaces  
 $\{X_{k,j} ^ { m/2}\}$  and numerical sequences $\{ h_j\}$ and $ \{ \theta _j \}$ 
 so that $u_k=e^ { \zeta^k_j\cdot x /h_j} (a^k + \psi ^k _ j )$ are  CGO 
 solutions of 
 $L_1u_1 =0$ and $L_2^t u _2=0 $ which satisfy the estimates \eqref{eq:avq} 
 and \eqref{eq:psi bound}.
%
%%\[ \norm{\psi_j^k}_{X_{k,j}^{m/2}} \lesssim h_j^{  \frac{3m}{2} -s + 1 - \epsilon} \]
%
%%\[ h_j^ { -1} \norm {Q^k } _{X_ {k,j}^ { -m/2} }+ \| q^k \| _{ X^ { -m/2}_ { k,j}} 
%%\lesssim h_j ^ { -m/2 -s +1- \epsilon } \]
%%We construct solutions of the form 
%%$u(x) = e^{x \cdot \zeta_j / h_j}(a^k + \psi^k_j)$ 
%%here $a \in C^\infty(\R^d)$ and $P(h_jD)^m a = 0$. We 
%%find $u_1 = e^{x \cdot \zeta_j / h_j}( a + \psi_1)$ and 
%%$u_2 = e^{x \cdot \zeta'_j / h_j}( a + \psi_2)$ where $a = 1 \in \bar{\Omega}$ 
%%and $\zeta_j + \zeta'_j = -i \xi_0 h_j$ and $L_1 u_1 = 0$ and $L_2^t u_2 = 0$. Since $B_1 = B_2$, we can assume $\Omega$ is a ball by Lemma \ref{lm:Extend to Ball}. 
%% We choose the amplitudes to be 1 here? 
%% 
It is clear from the definitions of $ \zeta^k$ in \eqref{eq:zetadef} that 
$ \zeta_j ^1 + \zeta_j ^2 = - i h_j \xi_0$. 
Since we have that the forms for $L_1$ and $L_2$ are equal (see \eqref{eq:equality}), we have 
\begin{align*}
    0 &= i h_j [ B_1(u_1 , u_2) - B_2(u_1 , u_2) ] \\
    &={\zeta_j^1} \cdot \langle (Q^1 - Q^2)a^1 ,a^2 e^{-i x \cdot \xi_0} \rangle 
    + {\zeta_j^1}\cdot\langle (Q^1 - Q^2) \psi_j^1 ,a^2 e^{-i x \cdot \xi_0}  \rangle
    +{\zeta_j^1} \cdot \langle (Q^1 - Q^2) a^1, e^{-i x \cdot \xi_0} \psi_j^2 \rangle
   \\
   &\quad + {\zeta^1_j} \cdot \langle (Q^1 - Q^2) \psi_j^1 , e^{-i x \cdot \xi_0} \psi_j^2 \rangle 
      + ih_j\langle (Q^1 - Q^2) \cdot D a^1, a^2 e^{-i x \cdot \xi_0} \rangle 
      \\
      &\quad + ih_j\langle (Q^1 - Q^2) \cdot D a^1,  \psi^2_je^{-i x \cdot \xi_0} \rangle 
   + ih_j\langle (Q^1 - Q^2) \cdot D \psi_j^1 , a^2e^{-i x \cdot \xi_0} \rangle 
   \\
    &\quad  + ih_j\langle (Q^1 - Q^2) \cdot D \psi_j^1 , e^{-i x \cdot \xi_0} \psi_2 \rangle 
    + ih_j\langle (q^1 - q^2)(a^1+\psi_j^1) , e^{-i x \cdot \xi_0} (a^2 + \psi_j^2) \rangle \\
    &= I + II + III + IV + V + VI + VII +VIII + IX.  
\end{align*}
%%xyz I put what the numbers represent each time. 
%% This satisfies #9 in project_X. THanks. 
Since the only dependence on $h$ in $I$ is through the formula for 
$ \zeta^1_j$ (see \eqref{eq:zetadef}), we have 
\[ \lim _ { j \rightarrow \infty } I = \lim_{j \to \infty} {\zeta_j^1} \cdot \langle (Q^1 - Q^2)a^1 ,a^2 e^{-i x \cdot \xi_0} \rangle = -ie^{ i \theta_0}
\langle (\mu_1 + i \mu_2) \cdot  (Q^1- Q^2 ) , a^1 a^2 e^ { -ix\cdot \xi _0} \rangle
%%(\hat Q^1( \xi_0) - \hat Q^2 ( \xi_0) )
\]
For $II$, we use the duality of $X^{m/2}_{1,j} $  and $X^ { -m/2}_{1,j}$, 
the estimates \eqref{Mult},  \eqref{eq:avq} and \eqref{eq:psi bound} to obtain
\[ |II| = | {\zeta_j^1}\cdot\langle (Q^1 - Q^2) \psi_j^1 ,a^2 e^{-i x \cdot \xi_0}  \rangle| \lesssim \norm{(Q^1 - Q^2)}_{X_{1,j}^{-m/2}} \norm{\psi_j^1}_{X_{1,j}^{m/2}} 
\lesssim h_j^{m -2s + 2 - 2 \epsilon } = h_j^ \epsilon  .  \]
where the last equality is our definition of $\epsilon$.
For $III$, we use the same argument with $X_{1,j}^\lambda$ replaced 
by $X_{2,j}^ \lambda$
\[ |III| = |{\zeta_j^1} \cdot \langle (Q^1 - Q^2) a^1, e^{-i x \cdot \xi_0} \psi_j^2 \rangle| 
\lesssim \norm{(Q^1 - Q^2) }_{X_{2,j}^{-m/2}} \norm{\psi_j^2}_{X^{m/2}_{2,j}} 
\lesssim h_j^{m -2s +2  - 2 \epsilon  }  = h_j ^ \epsilon . \]
For $IV$, we  write   
$ Q^k$ in the form \eqref{eq:qma}, use the estimate \eqref{eq:trilinear2}  
and the estimates \eqref{eq:psi bound} for $ \psi_j^k$
to conclude 
\[
 |IV| = | {\zeta^1_j} \cdot \langle (Q^1 - Q^2) \psi_j^1 , e^{-i x \cdot \xi_0} \psi_j^2 \rangle| \lesssim h_j^{1-2m}\|\psi^1_ j\| _ { X_{ 1,j}^{ m/2} } 
\| \psi _j^2 \|_{X_{2,j}^{m/2}} \lesssim h_j^ { m+1 -2s + 2 - 2 \epsilon }
\lesssim h_j^{1+ \epsilon }.  
\]
%%by Theorem \ref{th:Average} and $\eqref{eq:psi bound}$. 
%
Since the amplitudes are smooth and there is no dependence on
$\psi_k^j$, we have $|V| = |ih_j\langle (Q^1 - Q^2) \cdot D a^1, a^2 e^{-i x \cdot \xi_0} \rangle | \lesssim h_j$.
For $VI$, we use duality in $X^ { (m-1)/2}_{1,j}$, \eqref{SSE1} and \eqref{eq:psi bound}  to obtain  
\begin{multline*}
|VI| = |ih_j\langle (Q^1 - Q^2) \cdot D a^1,  \psi^2_je^{-i x \cdot \xi_0} \rangle |
\\
\lesssim h_j \norm{Q^1 - Q^2 }_{X_{1,j}^{(1-m)/2 }} 
\norm{D \psi_1}_{X_{1,j}^{(m-1)/2 }}   
\lesssim h_j^{m+1 -2s + 2 -2 \epsilon } = h_j^{1 + \epsilon} 
\end{multline*}
by  $\eqref{eq:psi bound}$ and \eqref{SSE1} 
 We may estimate $VII$ using the
same argument we used for $III$ giving 
$$
|VII| = |ih_j\langle (Q^1 - Q^2) \cdot D \psi_j^1 , a^2e^{-i x \cdot \xi_0} \rangle| \lesssim h_j . %%, \qquad |VIII|\lesssim h_j^ { 1+ \epsilon}
$$
For $VIII$,  we use the representation as in \eqref{eq:qma}  and 
\eqref{eq:trilinear2} to obtain 
\[ |VIII| = |ih_j\langle (Q^1 - Q^2) \cdot D \psi_j^1 , e^{-i x \cdot \xi_0} \psi_2 \rangle|
\lesssim h_j h_j^ { -2m} \| \psi_j^1\|_{ X^ { m/2}_{1,j}} 
\| \psi _j^2\|_{X_{2,j}^{ m/2}}
\lesssim h_j h_j^ { m -2s + 2- \epsilon } = h_j^ { 1+ \epsilon} . 
%%\lesssim h_j \norm{(Q^1 - Q^2) 
%\cdot D\psi_1 }_{X^{-m/2}} \norm{\psi_2}_{X^{m/2}} 
%\lesssim h_j^{m + 3 - 2\epsilon - 2s + \theta_Q}
\]
where we have also used \eqref{eq:psi bound}. 
For $IX$, we expand the product $(a^1+ \psi _j^1 ) ( a^2+ \psi^2_j)$ 
and use similar arguments to obtain
\begin{align*}
|IX| &= |ih_j\langle (q^1 - q^2)(a^1+\psi_j^1) , e^{-i x \cdot \xi_0} (a^2 + \psi_j^2) \rangle| \\ &\lesssim h_j (| \hat{q}^1(\xi_0) - \hat{q}^2(\xi_0) |
      + \norm{q^1 - q^2}_{X_{2,j}^{-m/2}} \norm{\psi_j^2}_{X_{2,j}^{m/2}} 
      + \norm{q^1 - q^2}_{X_{1,j}^{-m/2}} \norm{\psi_1}_{X_{1,j}^{m/2}} \\
    &+ h^ { -2m} \norm{\psi_1}_{X_{1,j}^{m/2}} \norm{\psi_{1,j}}_{X^{m/2}} ) \\
    & \lesssim h_j (1 + h_j^{m  -2s+ 2 -2 \epsilon } ) = h_j + h_j^ { 1+ \epsilon}  . 
\end{align*}
%by Theorem \ref{th:Average}, \eqref{eq:holder}, and $\eqref{eq:psi bound}$.
%
%Plugging in $s = m/2 + 1 - \frac{3}{2}\epsilon$ we get that $II$ through $VII$ 
%are bounded by $h_j^\epsilon$. 
Thus we can conclude 
\begin{equation}
\label{eq:identity}
0=\lim _ {j \rightarrow \infty } i e^ { -i \theta_0}h_j ( B(u_1,u_2)-B_2(u_1, u_2) =
\langle (\mu_1 + i \mu_2) \cdot (Q^1- Q^2) , a^1 a^2 e^ { -ix\cdot \xi _0} \rangle
\end{equation}
and if we set $ a^1=a^2=1$, then we conclude
\[ (\mu_1 + i \mu_2)  \cdot (\hat Q^1( \xi_0) -  \hat Q^2( \xi_0 ) )=0  .  \]
If we repeat this argument with   $ -\mu_2$ replacing $\mu_2$ we obtain
\[ (\mu_1 - i\mu_2)\cdot  (\hat Q^1( \xi_0) -  \hat Q^2( \xi_0 ) )=0  .  \]
Adding the last two displays 
we have  $ \mu\cdot ( \hat Q^1 ( \xi_0) - \hat Q^ 2( \xi_0) ) =0$ if 
$\mu$ is perpendicular 
to $\xi_0$. Choosing $ \mu $ 
parallel to 
$e_k \xi _{ 0, j} - e_j \xi _{ 0,k}$ we conclude that 
\[ 
%%\pd{Q_j}{x_k}{}  - \pd{Q_k}{x_j}{} =0
\frac \partial { \partial x_k } ( Q^1_j-Q^2_j) - \frac \partial { \partial x_j} ( Q^1_k -Q^2_k) = 0 .
\]
This gives us that $\curl (Q^1 - Q^2) = 0$. Since we assume $ \Omega$ is a ball, 
we may use  
Proposition \ref{prop:Poincare Lemma} to find $g$ which is supported in 
$ \bar \Omega$ and so that 
 $Q^1 - Q^2 = Dg$.  Now, we use \eqref{eq:identity}  with $a^1=1$ and 
 $a^2= ( \mu_1 - i \mu_2) \cdot x/2$. This choice for $ a^2$ gives that $(\mu_1+ i \mu_2) \cdot D a^2 = -i$. 
Using  that $(Q^1 - Q^2) = Dg$, we send $h_j \to 0$ and integrate by parts to obtain that 
\[ 
0 = (\mu_1+i\mu_2) \cdot \langle Dg , e^{- i x \cdot \xi_0} a_2 \rangle 
  = i\langle g , e^{- i x \cdot \xi_0} \rangle 
  \]
where we used  that $(\mu_1 + i \mu_2 )\cdot \xi_0 = 0$. 
This gives us that $g = 0$ and we have that $Q^1 = Q^2$ as desired.
Now that we have  $Q^1 = Q^2$, we turn to the proof that    $q^1 = q^2$.  
We use our CGO solutions 
with $ a^1=a^2=1$ 
  to obtain
\[ 0 = B_1(u_1 , u_2) - B_2(u_1 , u_2) = \langle (q^1 - q^2) (1 + \psi_1) , 
e^{-i x \cdot \xi_0} (1 + \psi_2) \rangle \]
Arguing as above (see the argument for the terms $I$-$IV$), we have 
\[ \lim _ { j \rightarrow \infty} \langle (q^1-q^2 ) ( 1 +\psi_1^j), (1+ \psi_2^j)e^ { -i x\cdot \xi _0}\rangle = \hat q^1( \xi _0 ) - \hat q^2( \xi_0 ). \]
Since $ \hat q^1 - \hat q^2 =0$, we have $q^1 = q^2$. 
\end{proof}
\begin{proof}[Proof of Main Theorem, Theorem \ref{th:main}] 
In section \ref{forms}, we show that if the Dirichlet to Neumann to maps for two
operators are equal, then  the two forms are equal. Thus this result follows immediately 
from Theorem \ref{th:Unique}. 
\end{proof} 
\appendix
\section{Sobolev spaces}
\renewcommand{\theequation}{\Alph{section}.\arabic{equation}}
%
\begin{comment}
We give some results for Sobolev spaces that follow from are based on recent work of 
D.~Mitrea, M.~Mitrea
and S.~Monniaux \cite{MR2425010}. We will work with the Lebesgue potential spaces 
which we denote
by $ W^ { s,p }( \R^d)$ and restrict our attention to the case when $ s \in \R$ and 
and $ 1< p < \infty$. The space is $ \tilde W ^ { s,p } ( \Omega) $ to be 
the subspace of $ W^ { s,p}( \R ^d)$ whose support (in the sense of distributions) is 
contained in $ \bar \Omega$. Our first result shows that when $s<0$, elements in 
$\tilde W^ { s,p}(\Omega)$ can written as derivatives of functions. 
\end{comment}
%
We establish several  representation theorems for   elements of Sobolev spaces that are 
used 
in our arguments. 
Our results will depend on techniques developed by 
%%Add reference to Bogovskii\cite{MR631691}\cite{MR631691}
D.~Mitrea, M.~Mitrea, and S.~Monniaux \cite{MR2425010} who give a 
version of the Poincar\'e Lemma in their  study of the Poisson problem for the
exterior derivative.  They construct a homotopy for the de Rham complex 
in a Lipschitz domain that is star-shaped with respect to each point in a 
small ball. The homotopy operators have the useful property that they 
preserve the class of forms that are supported in the domain. 
We briefly summarize what we need from 
their work and refer the reader to their article for precise definitions and 
complete statements of their results. 
They work with differential forms and initially assume the coefficients lie
in $C_0^\infty  (\Omega)$. Thus, let $ \Lambda $ denote the 
exterior algebra on $ \R^d$ and let
$\Lambda ^j$ be the elements of degree $j$. We will use 
$ C_0^ \infty(\Omega; \Lambda ^j)$ 
to denote the differential forms of degree $j$ on $ \Omega$ 
with coefficients
in $C_0 ^ \infty( \Omega)$. In \cite[Theorem 4.1]{MR2425010}, 
Mitrea  {\em et.~al.~}
find a family of operators 
 $J_j : C_0^ \infty (\Omega; \Lambda^j) \rightarrow 
 C_0^ \infty(\Omega ; \Lambda ^{ j-1})$, 
 for $ j=1, \dots, d$ 
 with the property that if $\omega \in C_0^ \infty( \Omega ; \Lambda ^j)$
\begin{equation}\label{MMMhomotopy}
\omega = \left\{ \begin{aligned} & J_1 ( d \omega) , \qquad &&j  =0 \\
    & J_{j+1} ( d \omega ) + d J_j (\omega) , \qquad && j =1, \dots, d-1\\
      &     d J_d( \omega) + \langle  \omega ,\theta   V_d \rangle \theta V_d  , \qquad 
        && j = d  
\end{aligned} \right. 
\end{equation}
Here, we are  temporarily using
$d$ to denote both the dimension and the exterior derivative on $ \R^d$. 
In the third line, $ \theta \in C_0^ \infty ( \Omega) $ is determined in the
construction of the operator $J_d$,  $V_d= dx_1\wedge \dots \wedge dx_d$ is the 
standard volume form on $ \R^d$ 
and
the pairing of duality $\langle \cdot , \cdot \rangle $ is defined in section 2 of 
Mitrea {\em et.~al.~}\cite{MR2425010}. 
In addition, the operators  $J_j$ map 
\begin{equation}
J_j:\tilde W^ { s,p}( \Omega;\Lambda ^j ) \rightarrow \tilde 
W^{s+1,p}( \Omega; \Lambda^{ j-1}) .
\end{equation}
To see this, consider $J_j$ acting on a larger domain, and use the estimate of 
Theorem 3.8 in Mitrea {\em et.~al.}~and the property that $J_j$ preserves the 
property 
of being supported in $\Omega$. 
\begin{proposition} 
%%\label{prop:sobolevrep} 
\label{prop:Decomposition}
Assume that $ \Omega $ is a Lipschitz domain. 
Suppose that $ f \in \tilde W^ { -s,p}( \Omega) $ with $ s= k-r$ where 
$ 0 \leq r < 1$ and $k\geq 1$ is an integer. Then we may write 
$$ f = \sum _ { |\alpha | = k } D^ \alpha F _\alpha  +F_0$$
with $ F_\alpha \in \tilde{W}^ { r,p}( \Omega)$ and $ F_0 \in C_c^ \infty ( \Omega)$
\end{proposition}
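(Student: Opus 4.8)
The plan is to regard $f$ as a top-degree differential form and to apply the de Rham homotopy identity \eqref{MMMhomotopy} of Mitrea, Mitrea and Monniaux in degree $d$ repeatedly --- $k$ times in total --- each application trading one derivative for one order of Sobolev regularity while keeping the support in $\bar\Omega$. Concretely, I would set $\omega = f V_d \in \tilde W^{-s,p}(\Omega;\Lambda^d)$, where $V_d = dx_1\wedge\cdots\wedge dx_d$, and use the last line of \eqref{MMMhomotopy}. That identity is stated for $\omega \in C_0^\infty(\Omega;\Lambda^d)$; since $C_0^\infty(\Omega)$ is dense in $\tilde W^{-s,p}(\Omega)$, while $J_d$ is continuous from $\tilde W^{-s,p}$ to $\tilde W^{-s+1,p}$, $d$ is continuous on the Sobolev scale, and the pairing $\langle\,\cdot\,,\theta V_d\rangle$ is continuous on $\tilde W^{-s,p}$, it extends by continuity to
$$ f V_d = d\big(J_d(f V_d)\big) + \langle f V_d,\theta V_d\rangle\,\theta V_d . $$
Writing $\eta := J_d(f V_d) \in \tilde W^{-s+1,p}(\Omega;\Lambda^{d-1})$ as $\eta = \sum_{j=1}^d \eta_j\, dx_1\wedge\cdots\wedge\widehat{dx_j}\wedge\cdots\wedge dx_d$ with $\eta_j \in \tilde W^{-s+1,p}(\Omega)$, and using $d\eta = \big(\sum_{j}(-1)^{j-1}\partial_j\eta_j\big)V_d$, we get $f = \sum_{j=1}^d(-1)^{j-1}\partial_j\eta_j + c\,\theta$, where $c = \langle f V_d,\theta V_d\rangle$ and $\theta \in C_0^\infty(\Omega)$ is the function furnished by the construction of $J_d$. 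Since $-s+1 = -(k-1)+r$, this settles the case $k=1$ at once, with $\eta_j \in \tilde W^{r,p}(\Omega)$ and $c\theta \in C_c^\infty(\Omega)$.

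For $k\ge 2$ I would iterate: apply the same decomposition to each $\eta_j \in \tilde W^{-(k-1-r),p}(\Omega)$, then to the functions produced at the next level, and so on for $k$ levels. The principal terms accumulate into $\sum_{|\alpha|=k} c_\alpha\,\partial^\alpha g_\alpha$ with $g_\alpha \in \tilde W^{r,p}(\Omega)$ (grouping the finitely many repeated multi-indices), while the remainders produced along the way, each of the shape $c_\gamma\,\partial^\gamma\theta'$ with $\theta'\in C_0^\infty(\Omega)$ and $|\gamma|\le k-1$, sum to a single $F_0\in C_c^\infty(\Omega)$; nothing has to be checked here beyond the fact that differentiating a function in $C_0^\infty(\Omega)$ keeps it in $C_0^\infty(\Omega)$. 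Absorbing the constants (and the factor converting $\partial^\alpha$ to $D^\alpha=(-i)^{|\alpha|}\partial^\alpha$) into $F_\alpha$ yields $f = \sum_{|\alpha|=k}D^\alpha F_\alpha + F_0$, which is the asserted representation.

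The one substantive point to address is that \eqref{MMMhomotopy} and the mapping property $J_j:\tilde W^{s,p}(\Omega;\Lambda^j)\to\tilde W^{s+1,p}(\Omega;\Lambda^{j-1})$ are obtained by Mitrea, Mitrea and Monniaux for bounded Lipschitz domains that are star-shaped with respect to a ball; for a general bounded Lipschitz $\Omega$ one covers $\Omega$ by finitely many such pieces and patches with a partition of unity, and in any event every use of this proposition elsewhere in the paper is made with $\Omega$ a ball, which is star-shaped. Thus the only real work is the bookkeeping in the iteration, and I expect the mild obstacle to be simply keeping the constants and signs straight while confirming at each stage that the accumulated remainder still lies in $C_c^\infty(\Omega)$.
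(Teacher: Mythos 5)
Your argument is correct and is essentially the paper's own proof: reduce to a star-shaped piece by a partition of unity, apply the degree-$d$ case of the Mitrea--Mitrea--Monniaux homotopy identity \eqref{MMMhomotopy} to $\omega = fV_d$, and iterate $k$ times, collecting the smooth remainders into $F_0$. The extra details you supply (the density extension from $C_0^\infty$ and the explicit computation of $d\eta$) are exactly the steps the paper suppresses.
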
  
\begin{proof}
Using a partition of unity, we may reduce to the case where the domain $ \Omega$ is 
star-shaped with 
respect to each point in a ball. We fix $f \in \tilde W ^ {s,p} ( \Omega)$ and 
apply the case $j=d$ of
\eqref{MMMhomotopy} to the form $ \omega = f V_d$. If we suppress the differentials, this can be
rewritten as 
$$ f = D \cdot F + \langle u , \theta\rangle \theta 
$$
with $F = ( F_1, \dots, F_d)$ and $F_j \in \tilde W ^ { s+1, p}( \Omega)$. If $ k=1$, we are 
done 
and for $ k \geq 2$, we repeat this 
argument to represent each $F_j$ as a smooth term and derivatives of functions 
in $ \tilde W^ { s+2,p} ( \Omega)$. 
After $k$ steps, we obtain the desired representation. 
\end{proof}
We remark that the specific case of Mitrea's results used in the proof of
Proposition \ref{prop:Decomposition} may be found in earlier work 
of Bogovskii \cite{MR631691}.
\begin{proposition} \label{prop:Poincare Lemma}
Suppose that $ \Omega $ is a ball in $\R^d$ and \(Q = ( Q_1, \dots, Q_d)\) lies 
$\tilde W^ { s,p} ( \R^d)$ with $ \curl Q = 0$, then we may find 
$g \in \tilde W ^ { s+1, p} ( \Omega) $ so that $ Q = Dg$. 
\end{proposition}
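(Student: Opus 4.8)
The plan is to recognize this as the Poincar\'e lemma for closed $1$-forms and to read off the primitive $g$ directly from the homotopy operators of Mitrea, Mitrea and Monniaux introduced above. First I would identify the vector field $Q=(Q_1,\dots,Q_d)$ with the $1$-form $\omega=\sum_{j=1}^d Q_j\,dx_j\in\tilde W^{s,p}(\Omega;\Lambda^1)$; under this identification the hypothesis $\curl Q=0$ is exactly the statement that $d\omega=0$ in the sense of distributions. Since $\Omega$ is a ball it is star-shaped with respect to each point of a small ball about its center, so the construction of \cite{MR2425010} applies to $\Omega$ itself and no partition of unity is needed.

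Next I would apply the case $j=1$ of the homotopy identity \eqref{MMMhomotopy}, namely $\omega = J_2(d\omega)+dJ_1(\omega)$; because $d\omega=0$ this collapses to $\omega=dJ_1(\omega)$, so componentwise $\partial_j J_1(\omega)=Q_j$, i.e. $\nabla J_1(\omega)=Q$. Since \eqref{MMMhomotopy} is stated for forms with $C_0^\infty$ coefficients, I would first run this for $\omega\in C_0^\infty(\Omega;\Lambda^1)$ and then pass to general $\omega\in\tilde W^{s,p}(\Omega;\Lambda^1)$ by density: choosing $\omega_n\to\omega$ in $\tilde W^{s,p}$ we get $d\omega_n\to d\omega=0$ in $\tilde W^{s-1,p}$, and the boundedness of $J_2:\tilde W^{s-1,p}\to\tilde W^{s,p}$ and of $J_1:\tilde W^{s,p}\to\tilde W^{s+1,p}$ recorded above lets me pass to the limit in $\omega_n=J_2(d\omega_n)+dJ_1(\omega_n)$ to conclude $\omega=dJ_1(\omega)$. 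Because the operators $J_j$ preserve the property of being supported in $\bar\Omega$, we have $J_1(\omega)\in\tilde W^{s+1,p}(\Omega)$.

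Finally I would set $g=i\,J_1(\omega)$. Then $g\in\tilde W^{s+1,p}(\Omega)$, and since $D=-i\nabla$ we obtain $Dg=-i\,\nabla\!\bigl(iJ_1(\omega)\bigr)=\nabla J_1(\omega)=Q$, which is the desired conclusion.

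I expect the only genuinely delicate point to be the approximation step: one must use $\curl Q=0$ in the distributional sense, check that $d\omega_n\to0$ in the correct negative-order space, and invoke the mapping properties of the $J_j$ (which, as noted in the discussion preceding Proposition \ref{prop:Decomposition}, follow from Theorem 3.8 of \cite{MR2425010} together with the support-preserving property) on the relevant Sobolev scale for the possibly negative exponent $s$ that arises in applications. The rest — in particular the bookkeeping of the factor $-i$ coming from the convention $D=-i\nabla$ — is routine.
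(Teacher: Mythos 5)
Your proposal is correct and follows essentially the same route as the paper: identify $Q$ with the closed $1$-form $\omega=\sum_k Q_k\,dx_k$ and apply the $j=1$ case of the homotopy identity \eqref{MMMhomotopy} to obtain $\omega=dJ_1(\omega)$, with $g$ a constant multiple of $J_1(\omega)$. The extra care you take with the density argument and the factor of $i$ from the convention $D=-i\nabla$ is sound but is left implicit in the paper's two-line proof.
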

\begin{proof}
We form a differential form $ \omega = \sum _ { k =1} ^ d Q _k dx_k$.  Our hypothesis $\curl Q = 0 $ 
gives immediately that $ d \omega =0$ and then the Proposition follows from the case $j=1$ of 
\eqref{MMMhomotopy}. 
\end{proof}
\bibliographystyle{plain}
%%\bibliography{main,new,inverse}
%

  \def\cprime{$'$} \def\cprime{$'$}

\medskip
\small
\noindent
posted to arXiv on 26 August 2021, \\ revised 7 December 2021

\end{document}